\numberwithin{equation}{section}
\newcommand{\CGO}{{\sc cgo}}
\newcommand{\F}{{\mathcal{F}}}
\newcommand{\bigO}{{\mathcal{O}}}
\newcommand{\Om}{\Omega}
\newcommand{\ra}{\rightarrow}
\newcommand{\dbar}{\overline{\partial}}
\newcommand{\R}{{\mathbb R}}
\newcommand{\C}{{\mathbb C}}
\newcommand{\Pc}{\mathcal{P}}
\def\oJ{\overline{J}}
\def\agt{}
\newcommand{\mltext}{}
\newcommand{\revised}{}
\newcommand{\revisedtwo}{}
\newcommand{\revisedthree}{}
\def\im{{\,\hbox{Im}}\,}
\def\re{{\text{Re}}}
\def\oal{\overline\alpha}
\def\onu{\overline\nu}
\def\oz{\overline{z}}
\def\ozp{\overline{z_1}}
\def\ozpp{\overline{z_2}}
\def\tw{\widehat\omega}
\def\vp{\varphi}
\def\tSz{\widetilde{\Sigma}}
\def\sone{\mathbb S^1}
\def\tP{\tilde\Phi}
\def\bs{\bigskip}
\def\ms{\medskip}
\def\lb{\linebreak}
\def\be{\begin{equation}}
\def\ee{\end{equation}}
\def\beqa{\begin{eqnarray}}
\def\eeqa{\end{eqnarray}}
\newtheorem{theorem}{Theorem}[section]
\newtheorem{proposition}{Proposition}[section]
\newtheorem{definition}[theorem]{Definition}
\begin{document}
\title[Propagation and recovery of singularities]{Propagation and recovery of singularities in the inverse conductivity problem}
\author[\quad Greenleaf, Lassas, Santacesaria, Siltanen and Uhlmann]{\qquad A. Greenleaf, M. Lassas, M. Santacesaria, S. Siltanen \qquad and G. Uhlmann}
%\author{A.\!Greenleaf}
%\author{\!M.\!Lassas}
%\author{\!M.\!Santacesaria}
%\author{\!S.\!Siltanen}
%\author{\!G.\!Uhlmann}

\date{A.G.  partially supported by DMS-1362271 and a Simons Foundation Fellowship,
M.L. and S.S are partially supported by Academy of Finland, G.U. is partially supported by a FiDiPro professorship.}

\address{A. Greenleaf, Department of Mathematics, University of Rochester, Rochester, NY 14627}
 
\address{M. Lassas and S. Siltanen, Department of Mathematics, University of  Helsinki, FIN-00014}
  
\address{M. Santacesaria, Dipartimento di Matematica, Politecnico di Milano, 20133 Milano, Italy}
   
\address{G. Uhlmann, Department of Mathematics, University of Washington, Seattle, WA 98195}

\maketitle

\begin{abstract}

The ill-posedness of Calder\'on's inverse conductivity problem, 
responsible for the poor spatial resolution of Electrical Impedance Tomography (EIT),  has been an impetus for 
the development of hybrid imaging techniques, which compensate for this lack of  resolution by coupling with a 
second type of physical wave, typically modeled by a hyperbolic PDE. 
We show in 2D how, using EIT data alone, to use propagation of singularities for complex principal type PDE to 
efficiently detect interior jumps and other singularities of the conductivity.
Analysis of variants of the CGO solutions of Astala and P\"aiv\"arinta [\emph{Ann. Math.}, {\bf 163} (2006)]
allows us to  exploit a complex principal 
type geometry underlying the problem  
and show that the leading term in a Born series is an invertible 
nonlinear generalized Radon transform of the conductivity. 
The wave  front set of all higher-order terms
can be characterized, and, under a prior, some 
refined descriptions are possible.
We present numerics to show that this approach  is effective for detecting 
inclusions  within inclusions.

\end{abstract}

\newpage
\tableofcontents
\newpage

\section{Introduction}\label{Introduction}

\noindent
Electrical impedance tomography (EIT) aims to reconstruct the electric conductivity, $\sigma$, inside a body from active current and voltage 
measurements at the boundary. In many important applications of EIT, such as medical imaging and geophysical prospecting, the primary 
interest is in detecting the location of interfaces between regions of {\agt inhomogeneous but relatively} smooth conductivity. 
{\agt For example, the conductivity of bone is much lower than that of either skin or brain tissue, 
so there are jumps in conductivity of opposite signs as one transverses the skull.}

In this paper we present a new approach  in two dimensions to  determining  the singularities of a conductivity from EIT data. 
Analyzing the  {complex geometrical optics} (CGO) solutions, originally introduced 
by Sylvester and Uhlmann  \cite{SylUhl1987} and in the form required here by
Astala and P{\"a}iv{\"a}rinta \cite{Astala2006} and Huhtanen and Per{\"a}m{\"a}ki \cite{Huhtanen2012},
we transform the boundary values of the CGO solutions, which are determined 
by the Dirichlet-to-Neumann map \cite{Astala2006a},
in such a way as to extract the leading singularities of the conductivity, $\sigma$.

We show that  the leading term of a Born series 
derived from the boundary data is a nonlinear Radon transform of  $\sigma$  
and allows for good reconstruction of the singularities of $\sigma$,  with  the  higher order terms  representing multiple scattering.
Although one cannot escape the exponential ill-posedness inherent in  EIT, the well-posedness of Radon inversion 
results in  a robust method for detecting the leading singularities of $\sigma$.
In particular, one is able to detect inclusions within inclusions  (i.e., nested inclusions) within an unknown inhomogeneous background conductivity;
this has been a challenge for other EIT methods.  
This property is crucial for one of the main applications motivating this study, namely using EIT for classifying strokes 
into ischemic (an embolism preventing blood flow to part of the brain) and hemorrhagic (bleeding in the brain); see \cite{Holder1992B,Holder1992,Malone2014}.

Our algorithm consists of two steps, the first of which is the reconstruction of the boundary values of the CGO solutions, 
and this is known to be exponentially ill-posed, i.e., satisfy only logarithmic stability estimates \cite{Knudsen2009}. 
The second step begins with a separation of variables and partial Fourier transform in the radial component of the spectral variable.
Thus, the instability of our algorithm arises from the exponential instability of the reconstruction of the CGO solutions from the Dirichlet-to-Neumann map, 
the instability arising from low pass filtering in Fourier inversion
(similar to those of regularization methods used for CT and other linear inverse problems),
and (presumably) the multiple scattering terms in the Born series we work with, which we only control rigorously for low orders and under some priors.
Nevertheless,  based on both the microlocal analysis and numerical simulations we present, the method appears
to allow for robust detection  of singularities of $\sigma$, in particular the location and signs of jumps.
See Sec. \ref{subsec illposedness} for further discussion of  the ill-posedness issues raised by this method.

EIT can be  modeled mathematically using the inverse conductivity problem of Calder\'on \cite{Calder'on1980}. 
Consider a bounded, simply connected domain $\Omega\subset\R^n$ with smooth boundary 
and a scalar conductivity coefficient $\sigma\in L^\infty(\Omega)$ satisfying $\sigma(x)\geq c>0$ 
almost everywhere. Applying a voltage distribution $f$ at the boundary leads to the elliptic boundary-value problem
\begin{equation}\label{conductivityeq}
 \nabla\cdot\sigma\nabla u = 0 \quad \mbox{ in }\Omega, \qquad  u|_{\partial\Omega}=f.
\end{equation}
Infinite-precision boundary measurements are then modeled by the Dirichlet-to-Neumann map 
\begin{equation}\label{DNmap}
 \Lambda_\sigma: f\mapsto \sigma\frac{\partial u}{\partial \vec{n}}\Big|_{\partial\Omega},
\end{equation}
where $\vec{n}$ is the outward normal vector of $\partial\Omega$.

\begin{figure}[t!]
\begin{picture}(300,200)
\put(-80,0){\includegraphics[height=6cm]{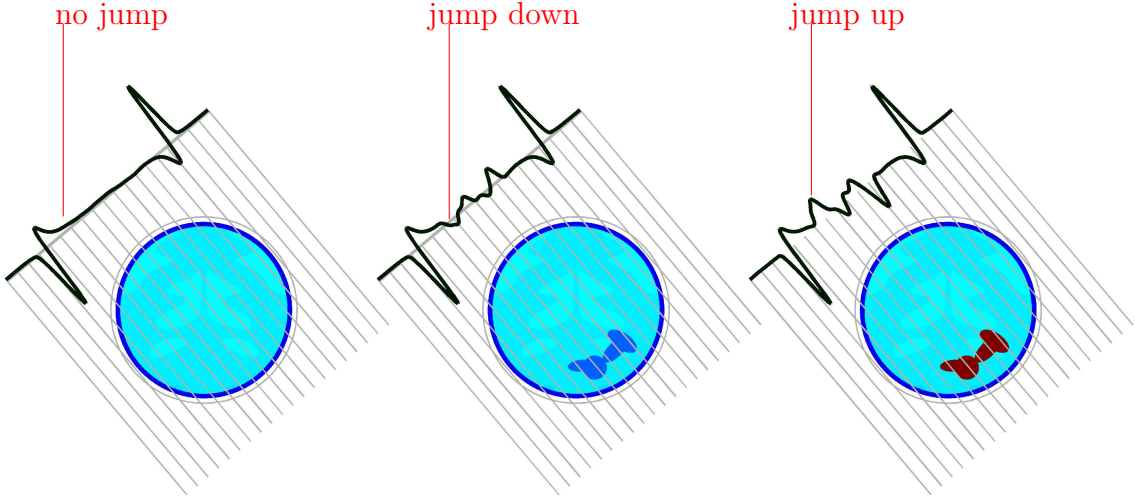}}
\color{red}
\put(-50,183){\line(0,-1){73}}
\put(-53,183){no jump}
\put(96,183){\line(0,-1){74}}
\put(88,183){jump down}
\put(233,183){\line(0,-1){65}}
\put(225,183){jump up}
\end{picture}
\caption{\label{Fig:strokephantoms2} The method provides information about inclusions within inclusions in an unknown inhomogeneous 
background. Jump singularities in the conductivity show up in the function values much like in parallel-beam X-ray tomography: recording 
integrals along parallel lines over the coefficient function. Figures illustrate this using stroke-like  computational phantoms. {Left:} Intact 
brain. Dark blue ring, with low conductivity, models the skull.  {Middle:} Ischemic stroke, or blood clot preventing blood flow to 
the dark blue area. The conductivity in the affected area is less  than that of the background. {Right:} Hemorrhagic stroke, or bleeding in the brain. 
The conductivity in the affected area is greater than the background. 
The function shown is $T^{a,+}\mu(t/2,e^{i\varphi})-T^{a,-}\mu(t/2,e^{i\varphi})$, and $\varphi$ indicates a direction perpendicular to the virtual ``X-rays.''}
\end{figure}

Astala and P{\"a}iv{\"a}rinta \cite{Astala2006a} transformed the construction of the CGO solutions  in dimension two 
was  by reducing the conductivity equation  to a Beltrami equation. 
{\agt Identify $\R^2$ with $\C$ by setting $z=x_1+ix_2$ and}
define a Beltrami coefficient, 
$$\mu(z) = (1-\sigma(z))/(1+\sigma(z)).$$
Since $c_1\le \sigma(z)\le c_2$, we have $|\mu(z)|\le 1-\epsilon$ for some $\epsilon>0$. 
Further, if we assume $\sigma\equiv 1$ outside some $\Omega_0\subset\subset\Omega$, then $\mbox{supp}(\mu)\subset\overline{\Omega_0}$.
Now consider the unique solution of
\begin{equation}\label{Beltrami_intro}
\bar \partial_z f_{\pm}(z,k) =\pm \mu(z) \overline{\partial_z f_{\pm}(z,k)}; \qquad  e^{-ikz} f_\pm(z,k) = 1+ \omega^{\pm}(z,k),
\end{equation}
where $ikz=ik(x_1+ix_2)$ and $\omega^{\pm}(z,k)=\mathcal{O}(1/|z|)$ as $|z|\ra\infty$. 
Here $z$ is considered as a spatial variable and $k\in\C$ as a spectral parameter. We note that $u=\mbox{Re}f_+$ satisfies (\ref{conductivityeq}), 
{\revised and denote $\omega^\pm$ by $\omega_\mu^\pm$ when  emphasizing dependence on the Beltrami coefficient $\mu$.}
{Recently, this technique has been generalised also for conductivities that are not in $L^\infty(\Omega)$ but only exponentially
integrable \cite{ALPapde}.

\medskip

{\revised The two crucial ideas {\revisedtwo of the current work are:}

(i) to analyze the scattering series we use the modified construction of Beltrami-CGO solutions by Huhtanen and Per{\"a}m{\"a}ki  \cite{Huhtanen2012}, 
which only  involves  exponentials 
of modulus 1 and where the solutions are constructed as a limit of an iteration of linear operations.
This differs from the original construction by Astala and P{\"a}iv{\"a}rinta \cite{Astala2006a}, 
where the construction of the exponentially growing solutions is based on the Fredholm theorem; and

(ii) to transform the CGO solutions by introducing polar coordinates  in the spectral parameter $k$, 
followed by a partial Fourier transform in the radial direction. 

\smallskip

These ideas are used as follows:
Formally one can view the Beltrami equation (\ref{Beltrami_intro}) as a scattering equation, where $\mu$ is  
considered as a compactly supported scatterer and the ``incident field'' is the constant function 1. 
Using (i), we write the CGO solutions $\omega^{\pm}$ as a ``scattering series'', 
\begin{equation}\label{scatteringseries}
 \omega^{\pm}(z,k)  \sim \sum_{n=1}^{\infty} \omega_n^{\pm}(z,k),
\end{equation}
considered as {\revisedtwo a formal power series} (cf.\ Theorem \ref{thm:Frechet}) 

%{\revisedthree

Using (ii), we decompose $k = \tau e^{i \varphi}$ and then,
for each $n$,  form the
partial Fourier transform of the $n$-th order scattering term from (\ref{scatteringseries})   in $\tau$,
denoting these by
\begin{equation}\label{1DFourierTrick}
 \widehat \omega_n^\pm(z,t,e^{i \varphi}) := \F_{\tau \to t} \big(\omega_n^\pm(z,\tau e^{i \varphi})\big).
\end{equation}
%}

{\revised As is shown in Sec. \ref{Weighted averages and the Radon transform},
singularities in $\sigma$ can be detected from averaged versions of $\widehat\omega^\pm_1$, 
denoted $ \widehat \omega^{a,\pm}_1$,
formed by taking a complex contour integral of $ \widehat \omega_1^\pm(z,t,e^{i \varphi})$ over $z\in\partial\Omega$}; 
see Fig. \ref{Fig:strokephantoms2}.

{\revised Recall that the traces of CGO solutions $\omega^{\pm}$
can be recovered perfectly from infinite-precision data $\Lambda_\sigma$ \cite{Astala2006a,Astala2006}.
When $\sigma$ is close to 1, the single-scattering term  $\omega^{\pm}_1$ is close to
 $\omega^{\pm}.$}
Fig. \ref{Fig:strokephantoms2} suggests that what we can recover resembles parallel-beam X-ray projection data of the singularities of $\sigma$. 
Indeed, we derive approximate reconstruction formulae for $\sigma$ analogous to the classical filtered back-projection method of X-ray tomography.

The wave front sets of  all of the terms $\widehat\omega_n^{\pm}$    
are analyzed in Thm. \ref{thm WF}.
More detailed descriptions of the  initial three terms, $\widehat \omega_1^{\pm},\, \widehat \omega_2^{\pm}$ and $\widehat \omega_3^{\pm}$,
identifying the latter two as sums of paired Lagrangian distributions under a prior on the conductivity, 
are given in Secs. \ref{subsec microlocal}, \ref{Analysis of the first-order term} and \ref{sec higher}, resp.

Let  $X=\{\mu\in L^\infty(\Omega);\ \hbox{ess supp}(\mu)\subset \Omega_0,\ \|\mu\|_{L^\infty(\Omega)}\le 1-\epsilon\},$
recalling that $\Omega_0\subset \subset \Omega$. The expansion   in (\ref{scatteringseries}) comes from the following:

\begin{theorem}\label{thm:Frechet}
{\revised  For $k\in\mathbb C$, define  nonlinear operators} $W^\pm(\cdot\,;k):X\to  L^2(\Om)$ by  
%defined by
$$
W^\pm(\mu;k)(z):=\omega _\mu^\pm(z,k). 
$$
{\revised Then, at any $\mu_0\in X$,  $W^\pm(\cdot\,;k)$ has Fr\'echet derivatives in $\mu$  of all orders $n\in \mathbb N$, 
denoted by $D^{n}W_k|_{\mu_0}$, }
and the multiple scattering terms in (\ref{scatteringseries}) are given by
\begin{equation}\label{eqn Aone}
\omega_n^\pm=[D^{n}W_k^\pm(\mu,\mu,\dots,\mu)]\big|_{\mu=0}.
\end{equation}
The $n$-th order scattering operators, 
\begin{equation}\label{eqn Atwo}
T^\pm_n:\mu\mapsto \widehat \omega_n^\pm:=\F_{\tau\to t}(\omega _n^\pm(z,\tau e^{i\varphi})),\quad z\in \partial \Omega,\ t\in \R,
 \  e^{i\varphi}\in  \mathbb S^1,
\end{equation}
{\revised which are homogeneous forms of degree $n$ in $\mu$, 
have associated multilinear operators whose  Schwartz kernels $K_n$ have
wave front relations which can be explicitly computed. 
See formulas (\ref{can_rel_T0_a}) and (\ref{Cjs}) for the case $n=1$ and (\ref{lagrangians LnJ}) for $n\ge 2$. 
$K_1$ is a Fourier integral distribution; 
$K_2$ is a generalized Fourier integral (or paired Lagrangian) distribution; and for $n\ge 3$, 
$K_n$ has wave front set contained in a union of a family of $2^{n-1}$ pairwise cleanly intersecting Lagrangians.
}
\end{theorem}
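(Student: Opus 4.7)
The plan is to recast the Huhtanen-Per\"am\"aki construction as a $\mu$-analytic fixed-point problem, identify the scattering series with the Taylor expansion of $W^\pm(\cdot;k)$ at $\mu=0$, and read off the wave front set of each term from an explicit oscillatory integral representation.

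First I would rewrite (\ref{Beltrami_intro}) as a second-kind integral equation of the schematic form $\omega = L_\mu^{\pm,k}\omega + g_\mu^{\pm,k}$ on a suitable $L^p(\Om)$, where $L_\mu^{\pm,k}$ is ``multiplication by $\mu$ times a unit-modulus exponential in $k$, followed by a Cauchy/Beurling-type transform and conjugation.''  The Huhtanen-Per\"am\"aki modification is exactly what ensures $\|L_\mu^{\pm,k}\|\le \|\mu\|_\infty\le 1-\epsilon$ uniformly in $k$.  Inverting by a Neumann series gives $\omega_\mu^\pm = (I-L_\mu^{\pm,k})^{-1}g_\mu^{\pm,k}$; since operator inversion is real-analytic on the open ball of operators of norm $<1$, and $\mu\mapsto (L_\mu^{\pm,k},g_\mu^{\pm,k})$ is affine and $\R$-linear in $(\mu,\bar\mu)$, the map $W^\pm(\cdot;k):X\to L^p$ is real-analytic and hence has Fr\'echet derivatives of all orders at every $\mu_0\in X$.

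Next, expanding the Neumann series at $\mu=0$ and collecting the $n$-homogeneous pieces in $(\mu,\bar\mu)$ yields
\[
\omega_n^\pm(z,k)=\sum_{\vec\epsilon\in\{0,1\}^n}\int_{\Om^n} K_n^{\vec\epsilon,\pm}(z,k;z_1,\dots,z_n)\prod_{j=1}^n \mu^{[\epsilon_j]}(z_j)\,dA(z_1)\cdots dA(z_n),
\]
where $\mu^{[0]}=\mu$, $\mu^{[1]}=\bar\mu$, and each $K_n^{\vec\epsilon,\pm}$ is a product of Cauchy/Beurling kernels interleaved with modulation factors $e^{\pm 2i\,\re(kz_j)}$ arising from the interplay of $e^{ikz}$ with the conjugations in the iteration.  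The polarization identity for analytic maps between Banach spaces identifies this $n$-homogeneous form with $D^n W^\pm|_0(\mu,\dots,\mu)$ up to the normalization of (\ref{eqn Aone}), and polarizing the $\mu$-slots gives the multilinear operator associated with $T_n^\pm$, which is thereby homogeneous of degree $n$.  To finish, I pass to polar coordinates $k=\tau e^{i\varphi}$, so each modulation becomes $e^{\pm 2i\tau\,\re(e^{i\varphi}z_j)}$, i.e.\ linear in $\tau$, while the Cauchy/Beurling kernels are $\tau$-independent; applying $\F_{\tau\to t}$ and restricting $z$ to $\partial\Om$ produces, for each $\vec\epsilon$, a conormal distribution supported on an explicit hypersurface of $\partial\Om\times\R\times\sone\times\Om^n$.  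For $n=1$ there is effectively a single $\vec\epsilon$, recovering the canonical relation of (\ref{can_rel_T0_a})/(\ref{Cjs}); for $n\ge 2$ the $n-1$ successive Cauchy-vs.-Beurling branchings of the iteration produce $2^{n-1}$ distinct phases and so $2^{n-1}$ Lagrangians, matching (\ref{lagrangians LnJ}).

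The main obstacle is verifying \emph{clean} pairwise intersection of these $2^{n-1}$ Lagrangians when $n\ge 3$.  Transversality of the characteristic varieties is immediate from the distinctness of the phases, but cleanness demands matching the codimension of each intersection with its tangent-space dimension.  I would proceed by induction on $n$: at each step of the iteration, the binary Cauchy-vs.-Beurling choice adds exactly one new linear constraint in the phase relative to the previous step, so every pairwise intersection picks up the predicted codimension.  The base cases $n=2,3$ then identify $K_2$ and, under the prior used in Section \ref{sec higher}, $K_3$ as sums of paired Lagrangian (generalized Fourier integral) distributions, in line with the more detailed statements referenced in Sections \ref{subsec microlocal}, \ref{Analysis of the first-order term} and \ref{sec higher}.
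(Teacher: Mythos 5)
Your high-level structure is aligned with the paper's: recast the construction as a second-kind integral equation, expand at $\mu=0$, identify the $n$-homogeneous pieces with Fr\'echet derivatives via polarization, then pass to polar coordinates and the partial Fourier transform to read off the Lagrangian geometry. However, there is a concrete error at the key analytic step, and some looseness in the counting of Lagrangians.

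The claim that $\|L_\mu^{\pm,k}\|\le\|\mu\|_\infty\le 1-\epsilon$ \emph{uniformly in $k$} is false. The Huhtanen--Per\"am\"aki trick does make the exponentials $e_{\pm k}$ unimodular, but the coefficient $\alpha(z,k)=-i\overline{k}\,e_{-k}(z)\mu(z)$ carries an explicit factor of $\overline{k}$, so the part of the integral operator built from the Cauchy transform $P$, namely $\overline{\alpha}P\rho=ike_k\mu P\rho$, has operator norm growing like $|k|$. The operator is therefore \emph{not} a contraction for large $|k|$, and the Neumann series does not converge for all $\mu\in X$; it converges only for $\|\mu\|_\infty$ small depending on $k$. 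Your argument for real-analyticity of $W^\pm(\cdot;k)$ on all of $X$ — ``operator inversion is real-analytic on the open ball of operators of norm $<1$'' — thus does not apply. The paper circumvents this by splitting: the Beurling piece $e_k\mu S\rho$ does have norm $\le\|\mu\|_\infty<1$ (since $S$ is unitary on $L^2$), so $(I-e_k\mu S\rho)^{-1}$ exists by Neumann series; the remaining Cauchy piece is compact (as $P$ is compact on $L^2(\Omega)$), and invertibility of $I-B_{\mu,k}$ is proved by Fredholm theory plus the uniqueness theorem for the Beltrami CGO solutions of Astala--P\"aiv\"arinta, with real-analyticity in $k$ coming from the Analytic Fredholm Theorem. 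Without some version of this decomposition, your proof of Fr\'echet differentiability at arbitrary $\mu_0\in X$ has a genuine gap. (The formal power-series identification (\ref{eqn Aone}) is unaffected, since Taylor coefficients at $\mu=0$ do not require convergence for all $\mu$, but the differentiability statement ``at any $\mu_0\in X$'' does require the Fredholm argument.)

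Two smaller points. First, the $2^{n-1}$ Lagrangians in (\ref{lagrangians LnJ}) do \emph{not} correspond to ``$2^{n-1}$ distinct phases,'' one per Cauchy-vs.-Beurling branch. The kernel $K_n$ is a sum of $2^{n-1}$ terms, indexed by $\vec\epsilon\in\{0,1\}^{n-1}$, but \emph{every} term is singular on the \emph{same} lattice of submanifolds $\{L_n^J: J\subset\{1,\dots,n-1\}\}$, since the $j$-th link $(\overline{z_j}-\overline{z_{j+1}})^{-1-\epsilon_j}$ blows up at $z_j=z_{j+1}$ for either value of $\epsilon_j$; the Cauchy-vs.-Beurling choice changes only the \emph{order} of the singularity, not its location. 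The count $2^{n-1}$ is the cardinality of the power set $\mathcal J$ of $\{1,\dots,n-1\}$, not the number of kernel summands (which happens to be the same). Second, the inductive hand-wave for clean intersection (``each step adds exactly one new linear constraint'') is not yet a proof; the cleanness of $N^*L_n^J\cap N^*L_n^{J'}$ should be verified directly from the fact that the defining functions $f_0$ and $\{z_j-z_{j+1}\}_{j\in J}$ of the $L_n^J$ have everywhere-independent differentials, so that the $L_n^J$ form a lattice of nested submanifolds with constant-rank intersections.
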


Singularity propagation for the first-order scattering $\widehat \omega_1^{\pm}$ is 
described by a Radon-type transform and a filtered back-projection formula.

\begin{theorem}\label{thm:FBP}
Define averaged operators {\mltext $T_n^{a,\pm}$ for $n\in\mathbb N$ and $T^{a,\pm}$ by
the complex contour integral
\footnote{{\revised Throughout, $d\mathbf{z}$ will denote the element of complex contour integration along a curve, 
while $d^1\mathbf{x}$  is arc length measure. $d^2z$ denotes two-dimensional Lebesgue measure in $\mathbb C$.}}
,
\begin{eqnarray}\label{averaging}
 T_n^{a,\pm} \mu (t,e^{i \varphi}) &=& \frac{1}{2 \pi i} \int_{\partial\Omega}\widehat \omega_n^\pm(z,t,e^{i \varphi})d\mathbf{z},\\
 \label{averaging total}
  T^{a,\pm} \mu (t,e^{i \varphi}) &=& \frac{1}{2 \pi i} \int_{\partial\Omega}\widehat \omega^\pm(z,t,e^{i \varphi})d\mathbf{z},
\end{eqnarray}
with $\omega_n^\pm$ defined via  formulas (\ref{eqn Aone})--(\ref{eqn Atwo}) and 
$\omega^\pm$ defined via  (\ref{Beltrami_intro}).}
Then we have 
\begin{equation}\label{FBP}
(- \Delta)^{-1/2} (T_1^{a,\pm})^{\ast}T_1^{a,\pm} \mu = \mu.
\end{equation}
\end{theorem}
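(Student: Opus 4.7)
The plan is to identify $T_1^{a,\pm}\mu$ with a constant multiple of $\partial_t$ applied to the classical two-dimensional Radon transform $R\mu$, reducing (\ref{FBP}) to the familiar filtered back-projection identity. By Theorem~\ref{thm:Frechet}, $\omega_1^\pm(z,k)=DW_k^\pm(\mu)|_{\mu=0}$; linearizing (\ref{Beltrami_intro}) at $\mu=0$ (where $f_\pm=e^{ikz}$ and so $\omega^\pm\equiv 0$) gives $\dbar_z\omega_1^\pm=\mp i\bar k\,\mu\,e^{-2i\re(kz)}$, and applying the Cauchy solid transform under the $\bigO(1/|z|)$ decay condition yields the explicit formula
\[
\omega_1^\pm(z,k)=\mp\frac{i\bar k}{\pi}\int_{\C}\frac{\mu(z')\,e^{-2i\re(kz')}}{z-z'}\,d^2z'.
\]

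Next, set $k=\tau e^{i\varphi}$, so that $\bar k=\tau e^{-i\varphi}$ and $2\re(kz')=2\tau\,\theta_\varphi\!\cdot\! x'$ with $\theta_\varphi=(\cos\varphi,-\sin\varphi)\in\sone$. Since $\F_{\tau\to t}[\tau\,e^{-2i\tau\theta_\varphi\cdot x'}]=2\pi i\,\delta'(t+2\theta_\varphi\!\cdot\! x')$, and Cauchy's integral formula gives $\frac{1}{2\pi i}\int_{\partial\Om}\frac{d\mathbf z}{z-z'}=\mathbf 1_\Om(z')$, combining these with $\mathrm{supp}(\mu)\subset\Om_0\subset\subset\Om$ collapses the kernel $1/(z-z')$ against $\mu$ and produces
\[
T_1^{a,\pm}\mu(t,e^{i\varphi})=\mp\tfrac{e^{-i\varphi}}{2}\,(\partial_s R\mu)(-t/2,\theta_\varphi),
\]
where I have also used $\delta(t+2\theta_\varphi\!\cdot\! x')=\tfrac12\delta(\theta_\varphi\!\cdot\! x'+t/2)$. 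This exhibits $T_1^{a,\pm}$, up to a unit-modulus factor and a dilation, as $\partial_t$ composed with the standard 2D Radon transform.

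The rest is Fourier analysis. Applying $\F_{t\to\sigma}$, integrating by parts, and invoking the Fourier slice theorem $\widetilde{R\mu}(\sigma,\theta)=\hat\mu(\sigma\theta)$ give $\F_{t\to\sigma}[T_1^{a,\pm}\mu](\sigma,e^{i\varphi})=\mp 2i\sigma\,e^{-i\varphi}\,\hat\mu(-2\sigma\theta_\varphi)$. Plancherel in $t$ together with the change of variables $\xi=-2\sigma\theta_\varphi$ from $\R\times[0,2\pi)$ to $\R^2\!\setminus\!\{0\}$---a 2-to-1 covering with Jacobian $|4\sigma|=2|\xi|$, along which the integrand is invariant under $(\sigma,\varphi)\leftrightarrow(-\sigma,\varphi+\pi)$---then expresses $\|T_1^{a,\pm}\mu\|_{L^2(\R\times\sone)}^2$ as a constant multiple of $\int_{\R^2}|\xi|\,|\hat\mu(\xi)|^2\,d\xi$. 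Polarization identifies $(T_1^{a,\pm})^{\ast} T_1^{a,\pm}$ with the Fourier multiplier $|\xi|$, i.e.\ a constant times $(-\Delta)^{1/2}$, and matching the normalization of (\ref{averaging}) fixes the constant to $1$, which is (\ref{FBP}). The only real obstacle is tracking constants consistently across all of these conventions; structurally, the Cauchy integral eliminates the $z$-variable and the Fourier slice theorem then uniquely converts the $\sigma$-prefactor into the symbol of $(-\Delta)^{1/2}$.
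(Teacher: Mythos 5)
Your proof follows essentially the same route as the paper's, which proceeds via Section \ref{Weighted averages and the Radon transform}: explicitly compute $\omega_1^\pm=\mp i\bar k P(e_{-k}\mu)$ (the paper obtains this from the first Neumann/Fr\'echet term $u_1=-\bar\alpha$; you obtain it equivalently by linearizing (\ref{Beltrami_intro}) at $\mu=0$ and inverting $\dbar$ with the Cauchy transform), take the partial Fourier transform in $\tau$ to produce the $\delta'$ kernel, then use Cauchy's integral formula over $\partial\Omega$ to collapse the $1/(z-z')$ factor and exhibit $T_1^{a,\pm}$ as a constant multiple of $\partial_t\circ R$ after rescaling. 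Where you diverge slightly is the final step: the paper reads off the exact inversion formula (\ref{t0a inversion}) and the principal-symbol computation (\ref{t0a inversion2}) from the explicit Radon form (\ref{t0a expl}), appealing to classical Radon inversion, whereas you verify the normal-operator identity directly by the Fourier slice theorem, Plancherel in $t$, the Jacobian computation for the polar change of variables $\xi=-2\sigma\theta_\varphi$, and polarization; this is a somewhat more self-contained and arguably more careful verification of the exact identity than what the paper writes out. One small caveat that applies equally to the paper's own write-up: with the stated $\frac{1}{2\pi i}$ normalization in (\ref{averaging}) and the $\F_{\tau\to t}$, $P$, and $(-\Delta)^{1/2}$ conventions used in the paper, the overall constant in $(T_1^{a,\pm})^*T_1^{a,\pm}=c\,(-\Delta)^{1/2}$ does not obviously come out to $c=1$ (one picks up stray factors of $2\pi$ from Plancherel and the change of variables), so your closing remark that ``matching the normalization fixes the constant to $1$'' should be checked carefully rather than asserted — but this is a bookkeeping issue in the statement's normalization, not a structural gap in the argument.
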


\noindent
Theorem \ref{thm:FBP} suggests an approximate reconstruction algorithm:
{\revised
\begin{itemize}
\item Given $\Lambda_\sigma$, follow \cite[Section 4.1]{Astala2011} to compute both $\omega^+(z,k)$ and $\omega^-(z,k)$ 
for $z\in\partial\Omega$ by solving the boundary integral equation derived in \cite{Astala2006}. 

\item {\agt Introduce polar coordinates in the spectral variable $k$ and compute the partial} Fourier transform, $\widehat \omega^\pm(z,t,e^{i \varphi})$.

\item Using the operator $T^{a,\pm}$ defined in (\ref{averaging total}), 
we compute $\widetilde{\mu}^+:= \Delta^{-1/2} (T_1^{a,+})^{\ast}T^{a,+} \mu$ and  $\widetilde{\mu}^-:= \Delta^{-1/2} (T_1^{a,-})^{\ast}T^{a,-} \mu$. Note the difference to (\ref{FBP}).

\item {\revised Approximately} reconstruct by $\sigma=(\mu-1)/(\mu+1)\approx(\widetilde{\mu}-1)/(\widetilde{\mu}+1)$, where  $\widetilde{\mu}=(\widetilde{\mu}^+-\widetilde{\mu}^-)/2$. The approximation comes from using $T^{a,\pm} \mu$ instead of $T^{a,\pm}_1 \mu$ in the previous step.
\end{itemize} 
}
See the middle column of Fig. \ref{Fig:rec_hemclot} for an example. 

One can also use the 
identity $(T_1^{a,\pm})^{\ast} T_1^{a,\pm} = (-\Delta)^{1/2}$ to enhance the singularities in the reconstruction. 
This is analogous to $\Lambda$-tomography in the context of linear X-ray tomography \cite{Faridani1992,Faridani1997}. 
See the right-most column in Fig. \ref{Fig:rec_hemclot} for reconstructions using the operator $(T_1^{a,\pm})^{\ast} T^{a,\pm}$.

\begin{figure}
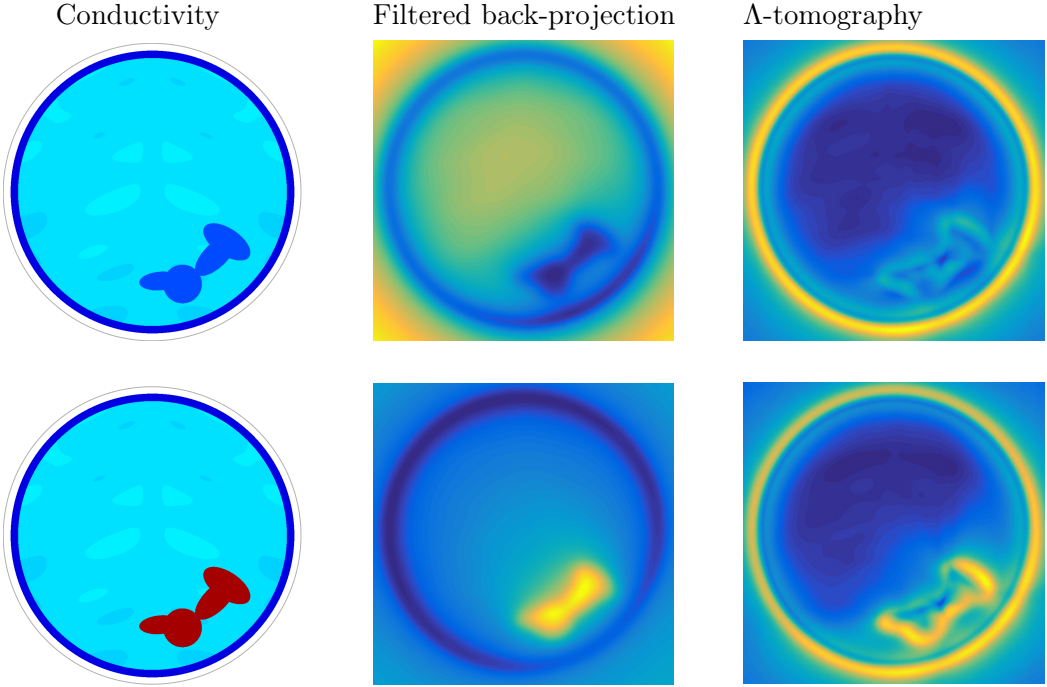

\begin{picture}(300,260)
\put(-80,0){\includegraphics[width=4cm]{strokephantoms_hem.pdf}}
\put(60,0){\includegraphics[width=4cm]{recon_stroke_hem_ex.eps}}
\put(200,0){\includegraphics[width=4cm]{recon_stroke_hem_lam.eps}}
\put(-80,130){\includegraphics[width=4cm]{strokephantoms_clot.pdf}}
\put(60,130){\includegraphics[width=4cm]{recon_stroke_clot_ex.eps}}
\put(200,130){\includegraphics[width=4cm]{recon_stroke_clot_lam.eps}}
\put(-60,250){\small Conductivity}
\put(60,250){\small Filtered back-projection}
\put(200,250){\small $\Lambda$-tomography}
\end{picture}
\caption{\label{Fig:rec_hemclot}Reconstructions, of computational phantoms   
modeling ischemic strokes (top row) and hemorrhagic  strokes (bottom row), from very high precision simulated EIT data. 
The results are promising for {\agt portable, cost-effective  classification of strokes without use of ionizing radiation.}}
\end{figure}

\medskip

Our general theorem on singularity propagation is quite technical, 
and so we illustrate it here using a simple example, 
postponing the precise statement and proof to Section \ref{Analysis of the higher order terms} below.

{\revisedthree
Assume that the conductivity is of the form $\sigma(z)=\sigma(|z|)$ and  smooth except for a jump across the circle $|z|=\rho$. 
 One can  describe the singular supports of the $\widehat \omega^\pm_{n}(z,t,e^{i\varphi})$. 
For $m\in\mathbb N$, define hypersurfaces
\begin{eqnarray*}
& &\hspace{-5mm}\Pi_{m}=\{ (z,t,e^{i\varphi})\in \mathbb C\times \R\times \mathbb S^1:\ t=2\rho m\}.
\end{eqnarray*}
Using the analysis later in the paper, one can see that
\begin{eqnarray*}
\hbox{\big(sing supp}(\widehat \omega^\pm_{n})\!& \cap &\! \{(z,t,e^{i\varphi});\ |z|\ge 1\}\big) \subset  \\
& &\bigcup\{\Pi_m: -n\le m\le n,\,\, m\equiv n\!\!\mod 2\}. 
\end{eqnarray*}
However, 
it turns out that, by a parity symmetry property described in Sec. \ref{sec evenodd} , 
subtracting $\widehat\omega^-$ from $\widehat\omega^+$   eliminates the even terms, $\widehat \omega^\pm_{2n}$, 
so that their singularities, including a strong one for $\widehat\omega^\pm_2$ at $t=0$, do not create artifacts in the imaging.
See Fig. \ref{fig:propsing2} for a diagram of singularity propagation in the case $\rho=0.2$.
}

\begin{figure}
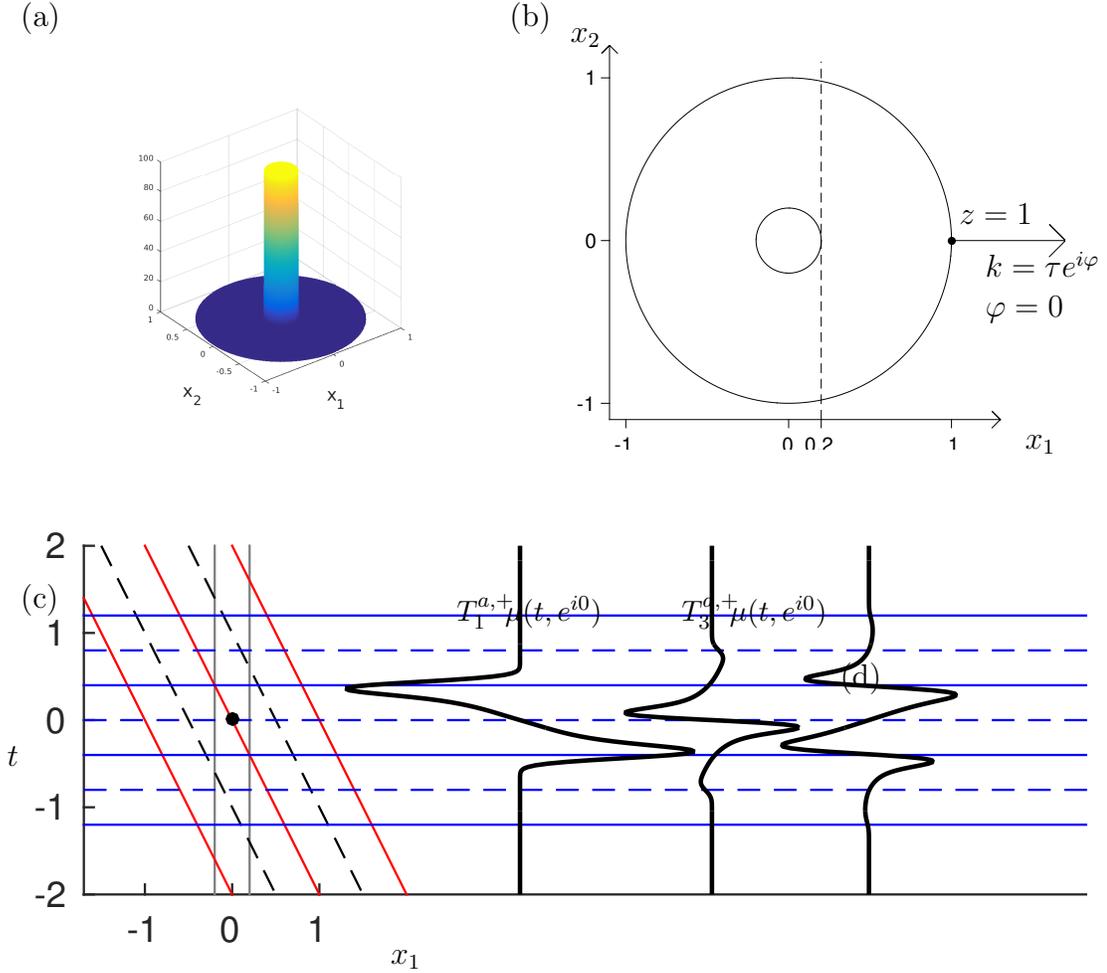

\begin{picture}(300,370)
\put(-35,10){\includegraphics[width=14cm]{diagram_cyl.eps}}
\put(125,135){\textcolor{black}{\small $T^{a,+}_1\!\!\mu(t,e^{i0})$}}
\put(210,135){\textcolor{black}{\small $T^{a,+}_3\!\!\mu(t,e^{i0})$}}

\put(100,5){$x_1$}
\put(-45,80){$t$}

\put(-30,210){\includegraphics[width=6cm]{sigma_cyl_mesh.eps}}

\put(170,200){\includegraphics[width=6.5cm]{singu1_cyl.eps}}
\put(168,354){$x_2$}
\put(340,200){$x_1$}
\put(315,285){\textcolor{black}{$z=1$}}
\put(325,265){\textcolor{black}{$k=\tau e^{i\varphi}$}}
\put(325,250){\textcolor{black}{$\varphi=0$}}

\put(-40,360){(a)}
\put(145,360){(b)}
\put(-40,140){(c)}
\put(270,110){(d)}
\end{picture}
\caption{\label{fig:propsing2}
(a) Three-dimensional plot of the conductivity having a jump along the circle with 
radius $\rho=0.2$ and center at the origin. (b) Unit disc and singular support of the conductivity in the $z=x_1+ix_2$ plane. 
(c) The term $T^{a,+}_1\!\!\mu(t,e^{i0})$ has peaks, indicated by blue arrows, at $t=\pm 2\rho$ corresponding to the locations of the main 
singularities in $\mu$, as expected by Theorem \ref{thm:FBP}.  
The higher-order term $T^{a,+}_3\!\!\mu(t,e^{i0})$,  
{\agt  smaller than $T^{a,+}_1\!\!\mu(t,e^{i0})$ in amplitude, exhibits singularities caused by reflections
at both  $t=\pm 2\rho$ and $t=\pm 6\rho$. (d)  The singularities of the term $T^{a,+}_3\!\!\mu(t,e^{i0})$  at $t=\pm 6\rho$ are very small. 
Shown is a zoom-in near $t=6\rho$, with amplitude increased by a factor of 70.}}
\end{figure}

\subsection{Ill-posedness, noise and deconvolution}\label{subsec illposedness}

{\revised 
The exponential ill-posedness of the Calder\'on inverse problem (i.e., satisfying a stability 
estimate of only logarithmic type) has important consequences for EIT with realistic data.
Calder\'on inverse problems for elliptic equations were shown to be exponentially ill-posed  by Mandache \cite{Mandache}. 
Corresponding to this, in \cite[Lemma 2.4]{Knudsen2009} it was shown that when the Dirichlet-to-Neumann map is given with error $\epsilon$, then the 
 boundary values of the CGO solutions, or equivalently, $\omega(z,k)|_{z\in \partial \Omega}$, 
 can be found with  accuracy $\epsilon$ only  for the frequencies $|k|\le  R_\epsilon=c\log(\epsilon^{-1})$.

This exponential instability holds even under the prior that conductivities consist of inclusions \cite{Alessandrini2005} .
Furthermore, inclusions  need to have a minimum size to be detectable \cite{Alessandrini1988,Isaacson1986,Cheney1992}, 
and in order  to appear in reconstructions, the deeper inclusions  are  inside an object, the larger they must be  \cite{Nag2009,Alessandrini2017,Garde2017}. 
Finally, the resolution of reconstructions is limited by noisy data. 
It is natural to ask how  these limitations  are reflected in the  approach described in this paper.

Our  results show that the part of the conductivity's wave front set   in the direction specified by $\varphi$ 
is seen as specific singularities in the function $\widehat \omega^\pm(z,\,\cdot\,,e^{i \varphi})$, 
defined in (\ref{1DFourierTrick}). 
However, due to algebraic decay of the principal symbol of a Fourier integral operator, the amplitude of the measured singularity is bounded by $C\mbox{dist}(\partial\Omega,z)^{-1}$, 
making it harder to recover details deep inside the imaging domain.
%The deeper inside the domain the wave front set component is located, the smaller is the amplitude of the singularity.

Furthermore, with {\revisedtwo realistic and noisy data,} we can compute $\omega^\pm(z,k)$ only in a disc $|k|\le k_{max}$ 
with a {\revisedtwo measurement apparatus and noise-dependent} radius $k_{max}>0$; see \cite{Knudsen2009,Astala2011,Astala2014}. 
With smaller noise we can take a larger $k_{max}$, whereas large noise forces $k_{max}$ to be small. This makes it more difficult to locate singularities precisely. 

To better understand the difficulty, consider the 
truncated Fourier transform:
\begin{equation}\label{1DFourierTrickWindowed}
  \int_{-k_{max}}^{k_{max}} e^{-it\tau}\omega^\pm(z,\tau e^{i \varphi})\,d\tau = 
  \int_{-\infty}^{\infty} e^{-it\tau}\omega^\pm(z,\tau e^{i \varphi})\chi_{k_{max}}(\tau)\,d\tau,
\end{equation}
where $\chi_{k_{max}}(\tau)$ is the characteristic function of the interval $[-k_{max},k_{max}]$. Note that
\begin{eqnarray}\label{CharFour}
  \widehat{\chi}_{k_{max}}(t)
  = C\,\frac{\sin (k_{max}\,t)}{t}
\end{eqnarray} 
with a constant $C\in\R$. Noise forces us to replace the Fourier transform in (\ref{1DFourierTrick}) 
by a truncated integral such as (\ref{1DFourierTrickWindowed}). Therefore, we need to apply  one-dimensional 
deconvolution in $t$ to recover $\widehat \omega^\pm(z,\,\cdot\,,e^{i \varphi})$ approximately from  $\widehat \omega^\pm(z,\,\cdot\,,e^{i \varphi})\ast \widehat{\chi}_{k_{max}}$.  
Higher noise level means a smaller  $k_{max}$, which by (\ref{CharFour}) leads to a wider blurring kernel $\widehat{\chi}_{k_{max}}$; 
due to the Nyquist-Shannon sampling theorem, this results in a
more ill-posed  deconvolution problem and thus limits the imaging resolution.

In practice it is better to use a smooth windowing function instead of the characteristic function for  reducing unwanted oscillations (Gibbs phenomenon), and there are many suitable deconvolution algorithms in the literature \cite{Chen2001,Candes2013,Candes2014}.
}

\medskip

%\clearpage

{\revised It is also natural to ask how does the  method introduced here compares to previous work in terms of detecting inclusions and jumps.}
\medskip

Many methods have been proposed for regularized edge detection from EIT data. 
Examples include the {\em enclosure method} \cite{Ikehata2000c,Ikehata2000a,Bruhl2000,Ikehata2004,Ide2007,Uhlmann2008b}, 
the {\em factorization method} \cite{Kirsch1998,Bruhl2000,Lechleiter2006,Lechleiter2008a}, the {\em monotonicity method} \cite{Harrach2013,Harrach2015}. 
These methods can only detect the outer boundary of an inclusion in conductivity, whereas the method described here, 
which exploits the propagation of singularities for complex principal type operators, can see nested jump curves. 
Also, the proposed method can deal with  {\revised  inclusions within inclusions, and with conductivities having both positive and 
negative jumps, even in unknown inhomogeneous smooth background}.

One can also attempt edge detection based on EIT algorithms originally designed for reconstructing the full conductivity distribution. 
There are two main approaches: sharpening blurred EIT 
images in data-driven post-processing \cite{Hamilton2014,Hamilton2015}, and applying sparsity-promoting inversion methods such as total variation regularization 
\cite{Dobson1994,Kaipio2000,Rondi2001,Chan2004,Chung2005,Tanushev2007,vandenDoel2006,Jin2012a,Garde2016,Zhou2015}. 
As of now, the former approach does not have rigorous 
analysis available. Some of the latter kind of approaches are theoretically capable of detecting nested inclusions; 
however, in variational regularization there is  typically an instability issue, 
where a large low-contrast inclusion may be represented by a smaller high-contrast feature in the reconstruction. 
{\revised Numerical evidence suggests that  method introduced here can accurately and robustly reconstruct  jumps, both in terms of location and sign.}

{\section{Complex principal type structure of CGO solutions}
\label{CPT structure}

We start by describing the microlocal geometry underlying the exponentially growing, 
or so-called \emph{complex geometrical optics (CGO)}, solutions to the conductivity equation  
 on $\R^d,\, d\ge 2$, 
\begin{equation}\label{cond eqn sec two}
\nabla \,\cdotp \sigma\nabla u(x)=0,\quad x\in \R^n,
\end{equation}
originating in \cite{SylUhl1987}.
For complex frequencies  $\zeta= \zeta_R+i\zeta_I\in\C^n$ with  $ \zeta\cdot\zeta=0$,
one can decompose $\zeta=\tau\eta$, 
with $\tau\in\R$ and $ \eta=\eta_R+i\eta_I,\, |\eta_R|=|\eta_I|=1,\, \eta_R\cdot\eta_I=0$.
Now consider  solutions to (\ref{cond eqn sec two}) of the form
$$
u(x):=e^{i \zeta\cdot  x}w(x,\tau)=e^{i\tau \eta \cdot x}w(x,\tau).
$$
Physically speaking, $\tau$ can be considered as a spatial
frequency, with the voltage on the  boundary $\partial \Omega$
oscillating at length scale $\tau^{-1}$.

The conductivity equation (\ref{cond eqn sec two}) becomes 
\begin{eqnarray*}
0&=&\frac 1\sigma \nabla \,\cdotp \sigma\nabla u(x)\\
&=&
\frac 1\sigma \nabla \,\cdotp \sigma\nabla ( e^{i\tau \eta\cdotp x}w(x,\tau))
\\
&=&
(\Delta+ (\frac 1\sigma \nabla  \sigma)\,\cdotp \nabla)( e^{i\tau \eta\cdotp x}w(x,\tau))
\\
&=&
\bigg(\Delta  w(x,\tau)+2i\tau \eta\,\cdotp \nabla w(x,\tau)
+
( \frac 1\sigma \nabla  \sigma)\,\cdotp (\nabla  +i\tau \eta) w(x,\tau) \bigg) e^{i\tau \eta\cdotp x}.
\end{eqnarray*}
Hence, we have 
\begin{eqnarray*}
\Delta  w(x,\tau)+2i\tau \eta\,\cdotp \nabla w(x,\tau)
+
( \frac 1\sigma \nabla  \sigma)\,\cdotp (\nabla  +i\tau \eta) w(x,\tau)=0.
\end{eqnarray*}
Taking the  partial Fourier transform $\widehat w$ in the $\tau$ variable and denoting the resulting dual variable by $t$, 
which can be thought of as a ``pseudo-time,'' one obtains
\begin{eqnarray*}
\Delta  \widehat w(x,t)-2\eta\frac \partial{\partial t} \,\cdotp \nabla \widehat w(x,t)
+
( \frac 1\sigma \nabla  \sigma)\,\cdotp (\nabla  -\eta\frac \partial{\partial t}  )\widehat  w(x,t)=0.
\end{eqnarray*}
The principal part of this equation is given by the operator
$$
\widetilde \square =\mathcal P_R+i\mathcal P_I=\Delta  -2\eta\frac \partial{\partial t} \,\cdotp \nabla
$$
where 
$$
\mathcal P_R=\Delta  -2\eta_R\frac \partial{\partial t} \,\cdotp \nabla \hbox{ and } 
\mathcal P_I=-2\eta_I\frac \partial{\partial t} \,\cdotp \nabla.
$$
With $\xi$   the  variable dual to $x$,
the full symbols of $\mathcal P_R$ and $\mathcal P_I$ are
$$
p_R(x,t,\xi,\tau)=-\xi^2+2\tau \eta_R \,\cdotp \xi,\quad p_I(x,t,\xi,\tau)=2\tau \eta_I \,\cdotp \xi,
$$
and these commute in the sense of Poisson brackets:
$\{p_R,p_I\}=0$. Furthermore,  on the
characteristic variety 
\begin{eqnarray*}
\Sigma&:=&\{(x,t,\xi,\tau)\in \R^{d+1}\times(\R^{d+1}\setminus\{0\});\ p_R(x,t,\xi,\tau)=0,\ p_I(x,t,\xi,\tau)=0\}\\
&=&\{(x,t,\xi,\tau)\in \R^{d+1}\times(\R^{d+1}\setminus\{0\}) ;\ 
|\xi|^2-2\tau \eta_R \,\cdotp \xi=0,\ 
2\tau \eta_I \,\cdotp \xi=0  \}\\
&=&\{(x,t,\xi,\tau)\in \R^2\times \R\times \R^2\times(\R\setminus\{0\}) ;\ 
  \xi= 2 \tau\eta_R\hbox{ or } \xi=0 \},
\end{eqnarray*}
the gradients $dp_R=(-2\xi+2\tau\eta_R,2\eta_R \,\cdotp  \xi) $ and $dp_I=(
2\tau\eta_I,2\eta_I \,\cdotp  \xi)$ are linearly independent. 
Finally, no bicharacteristic leaf (see below) is trapped over a compact set.
Thus,   $\widetilde \square =\mathcal P_R+i\mathcal P_I$
is a \emph{complex principal type operator} in the sense of Duistermaat and H\"ormander \cite{DH1972}. 
\medskip

Recall that for a \emph{real} principal type operator, such as $\partial/\partial x_1$ in $\R^m,\, m\ge 2$, or   
the d'Alembertian wave operator, the  singularities propagate along  curves (the  
characteristics); for instance, for the wave equation, singularities propagate along light rays. 
\emph{Complex} principal type operators, such as $\partial_{x_1}+i\partial_{x_2}$ in $\R^m,\, m\ge 3$, 
or the operator  $\widetilde \square$ above, also propagate singularities, but now
along \emph{two} dimensional surfaces, called leaves, which are the spatial projections of the bicharacteristic surfaces 
formed by the joint flowout of $H_{p_R},\, H_{p_I}$. 
For the operator $\widetilde \square$ above, this roughly means 
that if $\widetilde \square \widehat w(x,t)=\widehat f(x,t)$ and $(x_0,t_0,\xi_0,\tau_0)\in \Sigma$ is in the wave front set of $\widehat f(x,t)$, 
then the wave front set of $\widehat w(x,t)$ contains a plane through this point. See \cite[Sec. 7.2]{DH1972} for detailed statements.

In the situation relevant for this paper,  the $x$-projection of any bicharacteristic leaf  is all of $\R^2$ and thus reaches all points of $\overline{\Omega}$.
{\revisedtwo Thus, complete information about $\sigma$ in the interior is accessible to boundary measurements  made at \emph{any} point on $\partial\Omega$. }
We will see below that although this is the case, using suitable weighted integrals over the boundary produces far superior imaging; 
however, this is due to the amplitudes, not the underlying geometry.
  
 For the remainder of the paper, we limit ourselves to the Calder\'on problem in $\R^2$; 
 we begin by recalling the complex Beltrami equation formalism and CGO solutions of \cite{Astala2006a}, 
 as well as their modification in \cite{Huhtanen2012}.} 
 {\revised The complex analysis in these approaches  reflects the complex principal type structure discussed above,
disguised by the fact that we are working in two dimensions.}

\section{Conductivity equations and CGO solutions}\label{Construction of CGO solutions}

\noindent On a domain $\Omega\subset \R^2=\C$, let $\sigma\in L^\infty(\Omega)$ be a strictly positive conductivity, 
$\sigma\equiv 1$ near $\partial\Omega$, and extended to be $\equiv 1$ outside of $\Omega$. 
The complex frequencies $\zeta\in\mathbb C^2$ with $\zeta\cdot\zeta=0$ may be parametrized
by $\zeta=(k,ik),\, k\in\C$; thus,  with $z=x_1+ix_2$,  one has $\zeta\cdot x= kz$. 
Following Astala and P{\"a}iv{\"a}rinta \cite{Astala2006a}, consider simultaneously the conductivity equations for the two scalar conductivities $\sigma$ and $\sigma^{-1}$, 
\begin{eqnarray}
  \label{specialcondeq1}
  \nabla\cdot\sigma\nabla u_1 &=& 0, \qquad u_1\sim e^{ikz},\\
  \label{specialcondeq2}
  \nabla\cdot\sigma^{-1}\nabla u_2 &=& 0, \qquad u_2\sim e^{ikz}.
\end{eqnarray}
 The complex geometrical optics (\CGO) \ solutions of \cite{Astala2006a} are specified by their asymptotics
 {\mltext $u_j\sim e^{ikz}$, meaning that for all $k\in \C$,}
\begin{equation}\label{intro:eq713'}
  u_j(z,k) = e^{ikz}\big(1+\bigO(\frac{1}{z})\big), \mbox{ as }|z|\to \infty.
\end{equation}
The \CGO \  solutions  are constructed via the 
%($\R$-linear)  
Beltrami equa\nolinebreak tion,
\begin{equation}\label{Beltrami}
  \dbar_z f_\mu = \mu\,\overline{\partial_z f_\mu}, 
\end{equation}
where the Beltrami coefficient $\mu$ is defined in terms of  $\sigma$  by  
\begin{equation}\label{def:mu}
  \mu := \frac{1-\sigma}{1+\sigma}.
\end{equation}
$\mu$ is a  compactly supported, $(-1,1)$-valued function and, 
due to the assumption that  $0< c_1\le\sigma\le c_2<\infty$, one has $ |\mu|\le 1-\epsilon$ for some $\epsilon>0$.
It was shown in \cite{Astala2006a} that (\ref{Beltrami}) has solutions, for coefficients $\mu$ and $-\mu$, resp.,  of the form
\begin{eqnarray}\label{intro:eq13'}
  f_\mu(z,k) &=& e^{ikz}(1+\omega^+(z,k))\\
  f_{-\mu}(z,k) &=& e^{ikz}(1+\omega^-(z,k))\nonumber
\end{eqnarray}
with 
$$
  \omega^{\pm}(z,k)={\mathcal O}(\frac{1}{|z|}) \mbox{ as }|z|\to \infty.
$$
The various \CGO\ solutions are then related by the equation
\begin{equation}\label{relation}
2 u_1(z,k) =   f_\mu(z,k) +  f_{-\mu}(z,k)  + {\overline{ f_{\mu}(z,k)}} - {\overline{ f_{-\mu}(z,k)}},
\end{equation}
which follows from the fact that the real part of  $f_\mu(z,k)$ solves the equation (\ref{specialcondeq1}),  while the imaginary part solves (\ref{specialcondeq2}).
\medskip

In this work we will mainly focus on $\omega^+$, henceforth denoted simply by $\omega$; 
however, we will use $\omega^-$ in the symmetry discussion in Sec. \ref{sec evenodd}. 
Both of these can be extracted from voltage/current measurements for $\sigma$  at the boundary, $\partial\Omega$, 
as encoded in the Dirichlet-to-Neumann (DN) map of (\ref{specialcondeq1}). For the most part we will suppress the superscripts $\pm$, 
with it being understood in the formulas that for $\omega^\pm$, one uses $\pm\mu$.
\medskip

Huhtanen and Per\"am\"aki \cite{Huhtanen2012} introduced   the following modified derivation
of $\omega$, which, by avoiding issues caused by the exponential growth in the $k^{\perp}$ directions,  is highly efficient from a computational point of view. 
Let  $e_k(z):=\exp(i(kz+\overline{k}\overline{z}))=\exp\left(i2\re\left(kz\right)\right)$;  note that $|e_z(z)|\equiv 1$ and $\overline{e_k}=e_{-k}$. Define (as in \cite{Astala2006,Astala2006a})
\begin{equation}\label{def:APalpha}
\nu(z,k) := e_{-k}(z)\mu(z), \hbox{ and } \alpha(z,k) := -i\overline{k}e_{-k}(z)\mu(z). 
\end{equation}
Both $\alpha$ and $\nu$ are compactly supported in $\Omega$; 
since   $\dbar\omega = \nu\overline{\partial\omega}+\alpha\overline{\omega}+\alpha$, we see that $\dbar\omega$ is compactly supported as well. For future use, also note that
\begin{equation}\label{def:APalphabar}
\overline{\nu}(z,k) = e_{k}(z)\mu(z)\quad \hbox{ and }\quad
\overline{\alpha}(z,k) = i{k}e_{k}(z)\mu(z).
\end{equation}
\medskip

It was shown in   \cite[eqn.(4.8)]{Astala2006a} that $\omega(z,k)$ satisfies the inhomogeneous Beltrami equation,
\begin{equation} \label{omegaPDE}
  \dbar\omega - \nu\overline{\partial\omega}-\alpha\overline{\omega}=\alpha,
\end{equation}
where the Cauchy-Riemann operator $\overline{\partial}$ and  derivative ${\partial}$  are taken with respect to $z$.
Recall  the (solid) Cauchy transform $P$ and  Beurling transform\index{Beurling transform} $S$, defined by
\begin{eqnarray}
  \label{CauchyPdef}
  Pf(z) &=&- \frac{1}{\pi}\int_\C \frac{f(z_1)}{z_1-z}\,d^2 z_1, \\
  \label{BeurlingSdef}
   Sg(z) &=& -\frac{1}{\pi}\int_\C \frac{g(z_1)}{(z_1-z)^2}\,d^2 z_1,
\end{eqnarray}
which satisfy $\dbar P=I$, $S=\partial P$ and $S\overline\partial=\partial$ on $C_0^\infty(\C)$.
\medskip

It is shown in \cite{Huhtanen2012}, using the results of \cite{Astala2006a}, that  (\ref{omegaPDE}) has a unique 
solution $\omega\in W^{1,p}(\C)$ for  $2<p<p_\epsilon:=1+\frac1{1-\epsilon}$, where $\epsilon>0$ is such that $|\mu|\le 1-\epsilon$.
Now define $u$ on $\Omega$  by $\overline{u} = -\dbar \omega$; note that $u\in L^p(\Om)$, $\omega = -P\overline{u}$
and $\partial\omega = -S\overline{u}$.
Re-expressing   (\ref{omegaPDE}) in terms of $u$ leads to 
$$
  -\overline{u} -\nu\overline{(-S\overline{u})} -\alpha\overline{(-P\overline{u})}=\alpha.
$$
Using (\ref{def:APalpha}), this  further simplifies to
\begin{equation}\label{belteq2_u}
  u + (-\overline{\nu}S -\overline{\alpha}P)\overline{u} = -\overline{\alpha},
\end{equation}
which then  can be expressed as the integral equation, 
\begin{equation}\label{belteq3_u}
  (I + A\rho)u = -\overline{\alpha},
\end{equation}
where  $\rho(f):=\overline{f}$
denotes complex conjugation and $A:=(-\overline{\alpha}P-\overline{\nu}S )$. 
As shown in \cite{Astala2006a} and  \cite[Sec. 2]{Huhtanen2012}, $I+A$ is invertible on $L^p(\Om)$. 
Denote by  $U(k,\mu)=u(\,\cdotp,k)|_{\Omega}$ the restriction to $\overline\Omega$ of the unique solution to (\ref{belteq3_u}), and hence (\ref{belteq2_u}).

\section{Fr\'echet differentiability and the Neumann series}\label{sec frechet}

We now come to the key construction of the paper. 
{\agt For $\epsilon>0$ and any $\Omega_0\subset\subset \Omega$, let }
$$X=\{\mu\in L^\infty(\Omega);\ \hbox{ess supp}(\mu)\subset \Omega_0,\ \|\mu\|_{L^\infty(\Omega)}\le 1-\epsilon\}.$$ 
Furthermore, define $Y$ to be the closure of $C^\infty(\overline\Omega)$ with respect to 
$$||u||_Y:=\|u\|_{L^2(\Omega)} + \|\, u|_{\partial\Omega}\, \|_{L^\infty(\partial\Omega)}.$$
For $k\in \C$, let $U_k$ be the $\R$-linear map $U_k:X\to  L^2(\Om)$, 
given by $U_k(\mu)=u_\mu(\,\cdotp,k)$,
where $u_\mu(z,k)$  is the unique solution 
$u=u_\mu(\,\cdotp,k)\in  L^2(\Om)$ of the equation (\ref{belteq2_u}).  
Define $W_k:X\to  Y$ by 
$$
W_k\mu=\omega _\mu(\,\cdotp,k)=-P(\overline {u_\mu(\,\cdotp,k)}).
$$

\subsection{Fr\'echet differentiability}\label{subsec Frechet diff}

We will show that, for each $k\in\C$,  $W_k$ is a $C^\infty$-map $X\to Y$ 
and analyze its Fr\'echet derivatives at  $\mu_0=0$. 
For each $k$, one can solve (\ref{belteq3_u}) by a Neumann series which converges 
for $\|\mu\|_{L^\infty}$ sufficiently small. 
We  analyze the individual terms of the series by introducing polar coordinates in the $k$ plane, 
$k=\tau e^{i\vp}$, and  then taking the partial Fourier transform in $\tau$. 
The leading term in the Neumann series will be the basis for the  edge detection imaging technique that is the main point of the paper,
while the higher order terms are transformed into multilinear operators acting on $\mu$. 
The remainder of the paper will then be devoted to understanding the Fourier transformed terms,
using the first derivative for effective edge detection in EIT and obtaining partial control over the higher derivatives.
\bs

\begin{theorem}\label{theo: Frechet derivatives}
The map $U_k:X\to  L^2(\Omega)$,  $U_k(\mu):=u_\mu(\,\cdotp,k)$, is
 infinitely
 Fr\'echet-differentiable with respect to $\mu$, and its   Fr\'echet derivatives
are real-analytic functions of $k\in \C$. Moreover, for $p\ge 1$, its $p$-th order Fr\'echet derivative at $\mu=0$
 in direction $(\mu_1,\mu_2,\dots,  \mu_p)\in (L^2(\Omega_0))^p$   satisfies
\begin{eqnarray}\label{frechet norm}
 & &\bigg\|\frac{D^p U_k}{D\mu^p}\bigg|_{\mu=0}( \mu_1,\mu_2,\dots,  \mu_p)\bigg \|_{L^2(\Omega)}\\
  &&\nonumber \quad
  \leq C_p(1+|k|)^p \|\mu_1\|_{L^2(\Omega)}\,\cdotp \| \mu_2\|_{L^2(\Omega)}\cdots
 \|\mu_p\|_{L^2(\Omega)}
\end{eqnarray}
 for some $C_p>0$.
 In particular, the first Fr\'echet derivative has the form
 \be\label{frechet eqn}
\frac{DU_k}{D\mu}\bigg|_{\mu=0}(\mu_1)=-P\rho(ike_{-k}\mu_1).
\ee
Moreover, for $k\in \C$ the map $W_k:X\to Y$, 
$$
W_k(\mu):= \omega_\mu(\,\cdotp,k)=-P\rho (u_\mu(\,\cdotp,k)),$$
 is infinitely 
 Fr\'echet-differentiable with respect to $\mu$
and its Fr\'echet derivatives  are real-analytic functions of $k\in \C$.
 \end{theorem}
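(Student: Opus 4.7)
I would cast the defining relation (\ref{belteq3_u}) as a polynomial fixed-point equation and apply an analytic implicit function theorem. Set
\[
F(\mu,u) := u + A_\mu \rho u + \overline{\alpha_\mu}, \qquad F: X \times L^p(\Omega) \to L^p(\Omega),
\]
for $p$ slightly above $2$ in the range where $(I+A\rho)$ is invertible, as recalled from \cite{Astala2006a,Huhtanen2012}. Since $\overline{\alpha_\mu} = ike_k\mu$ and $\overline{\nu_\mu} = e_k\mu$ are $\mathbb{R}$-linear in $\mu$, $F$ is jointly polynomial of total degree two in $(\mu,u)$: bilinear in $(\mu,u)$ in the middle term and linear in $\mu$ in the last. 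The partial derivative $\partial_u F(\mu,\cdot) = I + A_\mu\rho$ is an invertible $\mathbb{R}$-linear operator on $L^p(\Omega)$ for every $\mu\in X$. The $\mathbb{R}$-analytic implicit function theorem (applied over the reals, since $\rho$ is only $\mathbb{R}$-linear) then gives that the solution map $\mu \mapsto u_\mu = U_k(\mu)$ is $\mathbb{R}$-analytic from $X$ into $L^p(\Omega) \hookrightarrow L^2(\Omega)$, hence $C^\infty$-Fr\'echet differentiable to all orders.

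To extract the derivatives at $\mu_0 = 0$, I would differentiate the identity $F(\mu,u_\mu) = 0$ in $\mu$ repeatedly, using $u_0 = 0$ and $A_0 = 0$, together with the fact that $A_\mu$ and $\overline{\alpha_\mu}$ are themselves linear in $\mu$ (so higher $\mu$-derivatives of these factors vanish). The recursion collapses into closed form: at first order, $D U_k|_0(\mu_1) = -\overline{\alpha_{\mu_1}}$, and after rewriting via the definitions of $\alpha$ and $\rho$ this becomes exactly (\ref{frechet eqn}). At order $p \geq 2$, one obtains the symmetric recursion
\[
\frac{D^p U_k}{D\mu^p}\bigg|_0(\mu_1,\ldots,\mu_p) = -\sum_{j=1}^p A_{\mu_j}\rho\, \frac{D^{p-1}U_k}{D\mu^{p-1}}\bigg|_0(\mu_1,\ldots,\widehat{\mu_j},\ldots,\mu_p),
\]
which unwinds to the symmetrization over $S_p$ of the $p$-th term of the formal Neumann series $\sum_n (-A\rho)^n\overline{\alpha}$; this is the precise link to the scattering series (\ref{scatteringseries}).

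The bound (\ref{frechet norm}) follows by induction on $p$. The base case is immediate from $|e_{\pm k}|\equiv 1$, which gives $\|D U_k|_0(\mu_1)\|_{L^2} \le |k|\|\mu_1\|_{L^2}$. In the inductive step, each $A_{\mu_j}$ is the composition of multiplication by $e_{\pm k}\mu_j$ (where the factor $k$ inside $\overline{\alpha_\mu}$ produces the extra $(1+|k|)$) with $P$ or $S$, both of which are $L^p$-bounded. The main obstacle is reconciling the multiplication by $\mu_j$ in $A_{\mu_j}$ with the stated $L^2$-norms on the directions: I would exploit the uniform compact support in $\Omega_0$ to interchange $L^2$ and $L^p$ norms via H\"older's inequality on the bounded set $\Omega_0$ and use $L^p$-to-$L^p$ boundedness of the Cauchy and Beurling transforms for $p$ slightly larger than $2$, absorbing the resulting constants into $C_p$.

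Real-analyticity in $k$ is inherited termwise from the Neumann series: each entry depends on $k\in \mathbb{C}\cong \mathbb{R}^2$ only through $e_{\pm k}(z) = \exp(\pm 2i\re(kz))$, which is entire in $(k,\bar k)$ uniformly for $z$ on compacta, and through polynomial factors in $k$. To transfer these conclusions from $U_k$ to $W_k = -P\rho\, U_k$, it suffices to show that $-P\rho$ is a continuous $\mathbb{R}$-linear map $L^p(\Omega)\to Y$: the $L^2(\Omega)$ component of $\|\cdot\|_Y$ is controlled by the boundedness of $P$ on compactly supported $L^p$ functions, and the $L^\infty(\partial\Omega)$ component follows from the fact that $P\overline u$ is smooth on $\partial\Omega$ since $\overline u$ is supported in $\overline{\Omega_0}\subset\subset\Omega$. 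Composition with $U_k$ then transports all of the Fr\'echet differentiability and $k$-analyticity to $W_k$.
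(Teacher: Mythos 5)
Your overall strategy---recasting the fixed-point equation $(I+A_\mu\rho)u=-\overline{\alpha_\mu}$ as $F(\mu,u)=0$ with $F$ jointly quadratic in $(\mu,u)$ and invoking the $\mathbb{R}$-analytic implicit function theorem---is a genuine alternative to the paper's proof and reaches the same conclusion about $C^\infty$ Fr\'echet differentiability. The paper does not use the implicit function theorem at all: instead it rewrites the equation to peel off the contraction $(I-e_k\mu S\rho)^{-1}$ (which exists on $L^2$ by a Neumann series, since $\|S\|_{L^2\to L^2}=1$ and $\|\mu\|_\infty<1$), leaving a Fredholm equation $(I-B_{\mu,k})u=K_{\mu,k}(ike_k\mu)$ whose inverse is constructed from scratch: $B_{\mu,k}$ is compact because $P$ is compact on $L^2(\Omega)$, injectivity of $I-B_{\mu,k}$ follows from the uniqueness theorem for the Beltrami equation of \cite{Astala2006a}, and joint real-analyticity in $k$ is delivered by the Analytic Fredholm Theorem. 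The higher derivatives are then computed explicitly via Faa di Bruno's formula and the polarization identity. What your route buys is brevity and conceptual transparency; what the paper's route buys is self-containedness (it does not lean on the $L^p$-invertibility of $I+A\rho$ as a black box) and an explicit handle on the operator-theoretic pieces $K_{\mu,k}$, $B_{\mu,k}$, which is precisely what is used to derive the estimate (\ref{frechet norm}). Your Leibniz-rule recursion for the derivatives at $\mu=0$ is correct and is equivalent to the paper's Faa di Bruno computation. Your argument that $-P\rho:L^p\to Y$ is bounded (using compact support of $\overline u$ in $\Omega_0$) and your remarks on $k$-analyticity are consistent with what the paper does.

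There is, however, one genuine gap in your plan, and you correctly flag its location but propose a fix that cannot work. You write that you would ``interchange $L^2$ and $L^p$ norms via H\"older's inequality on the bounded set $\Omega_0$'' to get the bound (\ref{frechet norm}) with $\|\mu_j\|_{L^2}$ on the right. H\"older on a bounded domain gives $\|f\|_{L^2}\lesssim\|f\|_{L^p}$ for $p>2$, i.e., it only \emph{lowers} the Lebesgue exponent, which is the wrong direction: you need to control a product like $\mu_j\cdot(S(e_{-k}\mu_{j'}))$ in $L^2$, and with $\mu_j\in L^2$ and $S(e_{-k}\mu_{j'})\in L^2$ the product is only $L^1$. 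The Beurling transform $S$ has no smoothing gain, so the obstruction cannot be circumvented by the compact support of the directions. In fact, a scaling example (take $\mu_1=\mu_2$ a normalized bump of width $\varepsilon$) shows that $\|D^2U_k|_0(\mu_1,\mu_2)\|_{L^2}$ grows like $\varepsilon^{-3}$ while $\|\mu_1\|_{L^2}\|\mu_2\|_{L^2}\sim\varepsilon^{-2}$, so the claimed bound cannot be pure $L^2$ on the directions for $p\ge 2$. You should be aware that the paper's own proof is also imprecise on this very point: the Faa di Bruno step bounds the \emph{operator} norm of the derivatives of $B_{t\mu_1,k}$, which is naturally an $L^\infty$ bound on $\mu_1$, and what honestly falls out is $\lesssim(1+|k|)^p\|\mu_1\|_{L^\infty}^{p-1}\|\mu_1\|_{L^2}$, not $\|\mu_1\|_{L^2}^p$. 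So the gap in your proposal mirrors a gap (or at least an imprecision) in the paper, but that does not make the H\"older fix correct---it should be replaced by tracking exactly one $L^2$ norm and $p-1$ sup norms of the directions, or by stating the estimate in $L^\infty$. Finally, a small caution: $X$ is a closed ball, not an open set, so invoking the implicit function theorem requires either extending to an open neighborhood of $X$ in the affine subspace $\{\mathrm{ess\,supp}\subset\Omega_0\}$ (trivial, since invertibility of $I+A_\mu\rho$ persists for $\|\mu\|_\infty<1$), or interpreting Fr\'echet differentiability at boundary points in the usual one-sided sense; the paper is equally casual here, so this is not a differentiating defect.
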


\begin{proof}
We can rewrite (\ref{belteq2_u}) for $u=u_\mu(\,\cdotp,k)\in L^2(\Omega)$ as
\be\label{new eqn}
(I-e_{k}\mu S\rho) u+ike_{k}\mu P\rho u =  ike_{k}\mu.
\ee
On the left hand side, $e_k$ and $\mu$ denote pointwise  multiplication operators with the functions 
$e_k(z)$ and $\mu(z)$, resp.; on the right, $e_{k}(z)\mu(z)$ is an element of $L^2(\Omega)$.

Since $\|\rho\|_{L^2(\Omega)\to L^2(\Omega)}=1$, $\|S\|_{L^2(\Omega)\to L^2(\Omega)}=1$,  
and $\|\mu\|_{L^\infty(\Omega)}<1$,  the inverse operator
 $(I-e_{k}\mu S\rho)^{-1}:L^2(\Omega)\to L^2(\Omega)$ exists and is a $C^\omega$ function (i.e., a real analytic function) of $k$.
Thus, (\ref{new eqn}) can be rewritten as 
\be\label{third eqn}
(I-B_{\mu,k})u= K_{\mu,k}(ike_{k}\mu),
\ee
where 
\be\label{B operator}
K_{\mu,k}u= (I-e_{k}\mu S\rho )^{-1}u,\quad
B_{\mu,k}u=K_{\mu,k}( ike_{k}\mu P \rho u).
\ee
Since $P:L^2(\Om)\to L^2(\Om)$ is a compact operator, (\ref{B operator}) defines a compact operator
$B_{\mu,k}:L^2(\Om)\to L^2(\Om)$. 
To find the kernel of $I-B_{\mu,k}$,  consider $u^0\in L^2(\Om)$ satisfying $(I-B_{\mu,k})u^0=0$.
Then,
\begin{equation} \label{u0 eq}
(I-e_{k}\mu S\rho) u^0+ike_{k}\mu P\rho u^0 = 0.
\end{equation}
When we consider  $P$, given in (\ref{CauchyPdef}), as an operator $P:L^2(\Om)\to L^2_{loc}(\C)$,
equation (\ref{u0 eq}) yields  that  $f^0(z) = - e^{ikz} (P\overline{u^0})(z)\in L^2_{loc}(\C) $ satisfies
\begin{eqnarray} \label{omegaPDE 2}
& &   \dbar_z f^0(z) = \mu(z)\,\overline{\partial_z f^0(z)},\quad z\in \C,
\\ \nonumber
&&   e^{-ikz}f^0(z)= {\mathcal O}(\frac{1}{|z|}) \mbox{ as }|z|\to \infty.
\end{eqnarray}
 By   \cite{Astala2006a}, the solution $f^0$ of (\ref{omegaPDE 2}) has to be zero.
 Hence, $$u^0(z)=-\overline{\partial ( e^{-ikz}f^0(z))}=0$$ and 
  the operator $I-B_{\mu,k}:L^2(\Om)\to L^2(\Om)$ is one-to-one.
Thus  the Fredholm equation (\ref{third eqn}) is uniquely
solvable and we can write its solutions as $u=u_\mu(\,\cdotp,k)$,
\be\label{third eqn solved}
u_\mu(\,\cdotp,k)=(I-B_{\mu,k})^{-1}K_{\mu,k}
(ike_{k}\mu).
\ee
 By the Analytic Fredholm Theorem, the maps $k\mapsto K_{\mu,k}$ and $k\mapsto (I-B_{\mu,k})^{-1}$
are real-analytic,  $\C\to \mathcal L(L^2( \Omega),L^2(\Omega))$, where $\mathcal L(L^2(\Omega),L^2( \Omega))$ is the space of the bounded linear operators $L^2( \Omega)\to L^2( \Omega)$.
\ms

Define $K^{(p)}=\frac{D^p}{D\mu^p}K_{\mu,k}|_{\mu=0}$ and $B^{(p)}=\frac{D^p}{D\mu^p}B_{\mu,k}|_{\mu=0}$. 
Since $K_{\mu,k}|_{\mu=0}=I$, we see that
\begin{eqnarray*} 
K^{(p)}(\mu_1,\mu_2,\dots,  \mu_p)
&=&\sum_{\sigma} ( e_k  \mu_{\sigma(1)}S\rho)\circ ( e_k  \mu_{\sigma(2)}S\rho)\circ \dots
\circ (e_k  \mu_{\sigma(p)}S\rho),
\end{eqnarray*} 
where the sum is taken over permutations $\sigma:\{1,2,\dots,p\}\to \{1,2,\dots,p\}$.
Furthermore, one has
\begin{eqnarray*} 
B^{(p)}(\mu_1,\mu_2,\dots,  \mu_p)
&=&K^{(p-1)}(\mu_2,\mu_3,\mu_4,\dots,  \mu_p)\circ ( ike_{k}\mu_1 P \rho)\\
& &+K^{(p-1)}(\mu_1,\mu_3,\mu_4,\dots,  \mu_p)\circ ( ike_{k}\mu_2 P \rho )\\
& &+K^{(p-1)}(\mu_1,\mu_2,\mu_4,\dots,  \mu_p)\circ ( ike_{k}\mu_3 P \rho )\\
& &+\dots+K^{(p-1)}(\mu_1,\mu_2,,\dots,  \mu_{p-1})\circ ( ike_{k}\mu_p P \rho ).
\end{eqnarray*} 
We can compute the higher order derivatives
$\frac{D^p}{D\mu^p}(I-B_{\mu,k})^{-1}|_{\mu=0}$, in the direction $(\mu_1,\mu_2,\dots,  \mu_p)$, 
using the polarization identity for symmetric multilinear functions, if these derivatives are known in
the case when $\mu_1=\mu_2=\dots=\mu_p$.
In the latter case the derivatives 
can be computed using  Faa di Bruno's formula, 
which generalizes the chain rule to higher derivatives, 
 $$
     {d^p \over dt^p} f(g(t)) =\sum \frac{p!}{m_1!\,m_2!\,\cdots\,m_p!}\cdot f^{(m_1+\cdots+m_n)}(g(t))\cdot \prod_{j=1}^n\left(\frac{g^{(j)}(t)}{j!}\right)^{m_j},
$$
where the sum runs over indexes $(m_1,m_2,\dots,m_p)\in \mathbb N^p$ satisfying  $m_1+2m_2+\dots+pm_p = p$. Indeed, this formula can be applied
with $f(B)=(I-B)^{-1}$ and $g(t)=B_{t\mu_1,k}$. As $g(0)=0$ and the norm of the  $p$-th derivative of $B_{t\mu_1,k}$ 
with respect to $t$ is bounded by $c_p(1+|k|)^p\|\mu_1\|^p$, we obtain estimate
(\ref{frechet norm}). Moreover, since $k\mapsto ike_k\mu$ is a real analytic map, $\C\to L^2( \Omega)$,
we see that  the Fr\'echet derivatives 
$k\mapsto \frac {D^pu_\mu}{D\mu^p}|_{\mu=0}(\,\cdotp,k)\in L^2( \Omega)$ are real analytic maps of $k\in \C$.

Finally, recall that $\Omega_0\subset \Omega$  is a relatively compact set.
For $\mu\in X$, we have $\hbox{supp}(\mu)\subset \Omega_0$, and thus  the function $u_\mu(\,\cdotp,k)=U_k(\mu)$ is also supported in
$\Omega_0$. As $P$ is given in (\ref{CauchyPdef}) we see easily that  for  $(\mu_1,\mu_2,\dots,  \mu_p)\in (L^2(\Omega_1))^p$
  the Fr\'echet derivatives 
$$  \frac {D^pW_k }{D\mu^p}\bigg |_{\mu=0}(\mu_1,\mu_2,\dots,  \mu_p)=
-P\rho \frac {D^pU_k }{D\mu^p}\bigg|_{\mu=0}(\mu_1,\mu_2,\dots,  \mu_p)$$
are in $Y$, and  these derivatives 
 are real analytic functions of $k\in \C$.
\end{proof}

\subsection{Neumann series}\label{Neumann series}

Now consider a Neumann series expansion approach to solving (\ref{belteq3_u}), 
looking for  $u\sim\sum_{n=1}^\infty u_n$, with $u_1:=-\overline\alpha$ and $u_{n+1}:=-A\overline{u_n},\, n\ge 1$; the resulting $\omega_n$ are defined by
$$\omega=-P\overline{u}\sim\sum_{n=1}^\infty -P\overline{u_n}=:\sum_{n=1}^\infty \omega_n.$$ 
The first three terms of each expansion are given by
\begin{equation}\label{u zero}
u_1=-\overline{\alpha},\quad \omega_1=P\alpha,
\end{equation}
\begin{equation}\label{u one}
u_2=A\alpha=-(\overline\alpha P+\overline{\nu}S)(\alpha),\quad \omega_2=
P({\alpha}\overline{P\alpha}+{\nu}\overline{S\alpha}),
\end{equation}
and
\beqa\label{u two}
u_3 &=& -(\oal P +\onu S)(\alpha \overline{P}\oal +\nu\overline{S}\oal),\nonumber \\
\nonumber \\
\omega_3&=&P(\alpha\overline{P}+\nu\overline{S})(\overline{\alpha}P\alpha+\overline{\nu}S\alpha).
\eeqa
By Thm. \ref{theo: Frechet derivatives},  $U_k:X\to L^2(\Omega)$ is $C^\infty$, and hence we have
\beqa\label{u derivatives}
u_n(\,\cdotp,k)&=&\frac{D^n U_k}{D\mu^n}\bigg|_{\mu_0=0}(\mu,\mu,\dots,\mu),\\
\omega_n(\,\cdotp,k)&=&-P\rho \bigg( u_n(\,\cdotp,k)\bigg).\nonumber
\eeqa

Due to the polynomial {\agt growth in the} estimates (\ref{frechet norm}), the functions $u_n(z,k)$ and $ \omega_n(z,k)$ 
%can be considered as 
{\revised are} tempered distributions in the $k$ variable. 
Hence we can introduce polar coordinates, $k=\tau e^{i\vp}$,
and then take the partial Fourier transform with respect to $\tau$ of the tempered distributions
 $\tau \mapsto u_n(z,k)|_{k=\tau e^{i\vp}}$ and $\tau\mapsto \omega_n(z,k)|_{k=\tau e^{i\vp}}$.
Later we prove the following theorem concerning the partial Fourier transforms of the Fr\'echet derivatives:

\begin{theorem}\label{theo: F-transform of Frechet derivatives}
{\mltext Let $\mu\in X$ and consider the partial Fourier transforms of the Fr\'echet derivatives, 
 \begin{eqnarray}\label{F-transform of frechet der}
& &\tw_n^{z_0}(t,e^{i\vp})=\F_{\tau \to t}\bigg(\omega_n(z_0,k)\bigg|_{k=\tau e^{i\vp}}\bigg),\quad n=1,2,\dots,\\
\nonumber 
& &\omega_n(\,\cdotp,k)=-P\rho \big(\frac{D^{n+1} U_k}{D\mu^{n+1}}\bigg|_{\mu_0=0}( \mu,\mu,\dots,  \mu)\big),
\end{eqnarray}
that we  denote at $z_0\in \partial\Omega$ by
$$
\tw_n(z_0,t,e^{i\vp})=\tw_n^{z_0}(t,e^{i\vp}).
$$
Then we have 
$$
\tw_n^{z_0}(t,e^{i\vp})=T_n^{z_0}( \mu\otimes\cdots\otimes\mu),$$
 where
$T_n^{z_0}$  are  $n$-linear operators given by 
\begin{eqnarray*}
\nonumber
& &\!\! T_n^{z_0}(\mu_1\otimes\cdots\otimes\mu_n):=\\
& &\!\!\int_{\C^{n}} K_n^{z_0}(t,e^{i\vp};z_1,\dots,z_{n})\, \mu_1(z_1)\cdots \mu_n(z_{n})\, d^2z_1\cdots d^2z_{n}.
\end{eqnarray*}
}The wave front set of the Schwartz kernel $K_n^{z_0}$ 
is contained in the union of a collection  $\{\Lambda_J:\, J\in\mathcal J\}$ of $2^{n-1}$ pairwise
 cleanly intersecting 
Lagrangian manifolds,   indexed by
$\mathcal J$, the power set of $\{1,\dots,n-1\}$.
%=\{J;\ J\subset \{1,\dots,n\}\}$. 
For each $J\in\mathcal J$, 
$\Lambda_J$  is the conormal bundle  of a
smooth submanifold,
$L_n^J\subset \R\times\sone\times \C^{n}$, i.e., $\Lambda_J=N^*L_n^J$, with
\begin{eqnarray}\label{lagrangians LnJ}\\
\nonumber
L_n^J:=\Big\{t+\left(-1\right)^{n+1}2\re\left(e^{i\vp}\sum_{j=1}^{n}(-1)^jz_j\right)=0\Big\}\cap \bigcap_{ j\in J} \{z_j-z_{j+1}=0\}.\hspace{-5mm}
\end{eqnarray}
 \end{theorem}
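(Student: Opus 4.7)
The plan is to start from the explicit Neumann series obtained by iterating $u_{j+1}=-A\overline{u_j}$ with $u_1=-\overline\alpha$ and $A=-\overline\alpha P-\overline\nu S$. Unfolding the recursion, $\omega_n=-P\overline{u_n}$ decomposes into a sum over $2^{n-1}$ strings $\vec\epsilon=(\epsilon_1,\ldots,\epsilon_{n-1})\in\{P,S\}^{n-1}$, each recording the choice of Cauchy vs.\ Beurling operator at an internal step. Each summand is an explicit $n$-fold integral in variables $z_1,\ldots,z_n\in\C$ whose kernel is a product of: (i) an outer factor $(z_1-z_0)^{-1}$ coming from $-P\overline{u_n}$; (ii) $n-1$ internal factors of the form $(z_{j+1}-z_j)^{-a_j}$ or $(\overline{z_{j+1}-z_j})^{-a_j}$ with $a_j=1$ when $\epsilon_j=P$ and $a_j=2$ when $\epsilon_j=S$, the presence of the bar being determined by the parity of $j$ (i.e., by how many conjugations have been passed through); and (iii) the product $\mu(z_1)\cdots\mu(z_n)$ together with scalar factors in $k,\overline k$ and exponential factors $e_{\pm k}(z_j)$ inherited from $\alpha,\overline\alpha,\nu,\overline\nu$.

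The next step is to show by induction on $n$ that the oscillatory part, after all exponentials and scalar factors are collected, reduces to $\tau^n c(\varphi)\exp(i\tau\Phi_n)$ with $|c(\varphi)|=1$ and
\[ \Phi_n(\varphi,z_1,\ldots,z_n)=(-1)^{n+1}\,2\,\re\Big(e^{i\varphi}\sum_{j=1}^n(-1)^jz_j\Big). \]
The induction step uses that each application of $-A\rho$ contributes exactly one new factor $e_{\pm k}(z_{j+1})\mu(z_{j+1})$ and one new factor of $k$ or $\overline k$ at the newly introduced variable, and that conjugation by $\rho$ flips $k\leftrightarrow -k$ in every previously accumulated phase, forcing $z_{j+1}$ to enter with the opposite sign of $z_j$. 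This produces the alternating $(-1)^j$, while the global sign $(-1)^{n+1}$ records the outermost conjugation in $-P\overline{u_n}$.

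At this point one takes the partial Fourier transform in $\tau$. Since $\mathcal F_{\tau\to t}(\tau^n e^{i\tau\Phi_n})=2\pi(i\partial_t)^n\delta(t-\Phi_n)$, the oscillatory factor becomes a conormal distribution to the smooth hypersurface $\{t=\Phi_n\}\subset\R\times\sone\times\C^n$, with wave front set equal to $N^*\{t=\Phi_n\}$. The remaining amplitude is a product of Cauchy- and Beurling-type factors, each of which is a conormal distribution to a diagonal $\Delta_j=\{z_j=z_{j+1}\}$ with wave front set contained in $N^*\Delta_j$. Since the projection of $d(t-\Phi_n)$ to $T^*\C^n$ is nondegenerate in each $z_j$ separately, the hypersurface $\{t=\Phi_n\}$ is transverse to every subcollection of diagonals; the diagonals themselves are mutually transverse; hence all intersections are clean, and H\"ormander's theorem on products of distributions with properly intersecting wave front sets applies.

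Finally, for each subset $J\subset\{1,\ldots,n-1\}$ let $L_n^J$ be as in \eqref{lagrangians LnJ}. The product of $\delta(t-\Phi_n)$ with the subproduct of Cauchy/Beurling factors associated with indices in $J$ then has wave front set contained in $N^*L_n^J$; summing over $J$ gives $\text{WF}(K_{n,\vec\epsilon}^{z_0})\subset\bigcup_J N^*L_n^J$ for every $\vec\epsilon$, and summing over $\vec\epsilon$ yields the same bound for $K_n^{z_0}=\sum_{\vec\epsilon}K_{n,\vec\epsilon}^{z_0}$. The $2^{n-1}$ Lagrangians $N^*L_n^J$, indexed by $\mathcal J$ (the power set of $\{1,\ldots,n-1\}$), intersect pairwise cleanly by the same transversalities used above. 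The main obstacle is the bookkeeping in the first two steps: tracking the alternating conjugations so that the sign pattern of $\Phi_n$, the alternation between conjugated and unconjugated Cauchy factors in the amplitude, and the power $\tau^n$ all emerge consistently. Once the kernel has been placed in the form (oscillatory phase in $\tau$) $\times$ (product of conormal distributions along diagonals), everything else is routine microlocal calculus.
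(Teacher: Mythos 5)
Your proposal follows essentially the same route as the paper's own treatment (Section 7.1, around formula \eqref{Kn} and the discussion following it): unfold the Neumann series $u_{j+1}=-A\overline{u_j}$ into $2^{n-1}$ branches indexed by the Cauchy/Beurling choices $\vec\epsilon$, write each branch explicitly as an $n$-fold integral whose kernel is a product of a single factor that oscillates in $\tau$ (supported, after $\mathcal F_{\tau\to t}$, on the alternating-sum hypersurface) and Cauchy/Beurling factors singular along the diagonals $\{z_j=z_{j+1}\}$, and then invoke H\"ormander's wave-front-set bound for products of conormal distributions. This is precisely how the paper identifies $WF(K_n^{z_0})\subset\bigcup_{J\in\mathcal J}N^*L_n^J$, so your plan is sound and not a different proof.

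Two bookkeeping points are slightly off, though neither damages the wave-front-set conclusion. First, the power of $\tau$ in front of the phase is \emph{not} uniformly $\tau^n$: each $\alpha=-i\overline k\,e_{-k}\mu$ step contributes one factor of $\overline k$, whereas each $\nu=e_{-k}\mu$ (Beurling) step contributes none, so the power depends on $|\vec\epsilon|$ (cf. the explicit $K_2$ in \eqref{K one}, which contains both a $\delta''$ and a $\delta'$ term; the paper's general formula \eqref{Kn} records the order as $n+1-|\vec\epsilon|$). Since $\delta^{(m)}(t-\Phi_n)$ has the same singular support and conormal wave front set for every $m$, this error does not affect the containment $WF(K_n^{z_0})\subset\bigcup_J N^*L_n^J$, but it does matter for the orders of the Lagrangian distributions that the paper later tracks (Theorems \ref{thm omegaone} and \ref{thm omegatwo}), so it would be a genuine issue if you pushed the analysis further. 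Second, there is a sign slip in your $\Phi_n$ relative to \eqref{lagrangians LnJ}: the hypersurface is $\{t+(-1)^{n+1}2\re(e^{i\vp}\sum(-1)^jz_j)=0\}$, i.e. $t=(-1)^{n}2\re(\cdots)$, not $t=(-1)^{n+1}2\re(\cdots)$ (check against $K_1$ in \eqref{Schwartz}: there the support is $\{t=2\re(e^{i\vp}z_1)\}$). Again harmless for the geometry, but worth keeping straight.
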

 \medskip

Roughly speaking, Theorem \ref{theo: F-transform of Frechet derivatives} implies that the operator $T_n^{z_0}$ transforms singularities of $\mu$ to singularities
of $\tw_n^{z_0}$  so that the singularities of $\mu$  propagate  along  
the $L_n^J$. {\agt Further discussion, as well as the proof of the theorem, will be found  later in the paper.}

\medskip

The first-order term  $\omega_1$ will serve as the  basis for stable edge and singularity detection,
while the higher order terms need to be characterized in terms their regularity and the location of their wave front sets. 
After the partial Fourier transform $\omega\to\tw$  described in the next section, 
the map $T_1:\mu\to\tw_1$ turns out to be essentially a derivative of the Radon transform. 
Thus,  \emph{the leading term of\, $\tw$ is a nonlinear Radon transform} of the conductivity $\sigma$, 
allowing for good reconstruction of the singularities of $\sigma$ from the singularities of $\tw_1$. 
The   higher order terms $\tw_n$  
{\agt record} 
scattering effects and explain artifacts observed in  
simulations; these  should be  filtered out or otherwise taken into account for efficient numerics and 
accurate reconstruction. 
We characterize this scattering in detail for  $\tw_2$ in terms of oscillatory integrals; almost as precisely for $\tw_3$; 
and in terms of the wave front set for  $\tw_n,\, n\ge 4$.

\section{Fourier transform and the virtual variable}\label{Analysis of the zeroth-order term}

We continue the analysis  with two elementary transformations of the problem: 
\smallskip

(i) First, one introduces polar coordinates in the complex frequency, $k$, writing $k=\tau e^{i\vp}$, with $\tau\in\R$ and $e^{i\vp}\in\sone$. 
\smallskip

(ii) Secondly, one takes a partial Fourier transform in $\tau$,  introducing a  nonphysical artificial (i.e.,  {\it virtual})  variable, $t$. We show that the introduction of this variable 
reveals the complex principal type structure of the problem, as discussed in \S\ref{CPT structure}. This allows for good propagation of singularities from the interior of $\Omega$ to the boundary, allowing singularities of the conductivity in the interior to be robustly detected by voltage-current measurements at the boundary.

By (\ref{def:APalpha}), $\omega_1=ikP(e_k\mu)$ (see also (\ref{frechet eqn})),  so that
\begin{equation}\label{omega zero}
\omega_1(z,k)=\frac{ik}\pi \int_{\C} \frac{e_k(z_1)\mu(z_1)}{z-z_1} \, d^2z_1.
\end{equation}
Write
the complex frequency as $k=\tau e^{i\vp}$ with $\tau\in\R,\, \vp\in [0,2\pi)$ (which we usually identify with $e^{i\vp}\in {\mathbb S}^1$). Taking the partial Fourier transform in $\tau$ then yields 
\begin{eqnarray}\label{omega zero tilde}
\quad\tw_1(z,t,e^{i\vp})&:=&\int_\R e^{-it\tau}\omega_1(z,\tau e^{i\vp})\, d\tau \\
&=& \frac{e^{i\vp}}\pi\int_{\R} \int_\C\frac{e^{-i\tau t}}{z-z_1}\, (i\tau)\, e_{\tau e^{i\vp}}(z_1) \mu(z_1)\, d^2z_1\, d\tau\nonumber \\
&=&  \frac{e^{i\vp}}\pi \int_{\R}\int_\C (i\tau)\, \frac{e^{-i\tau(t-2 \re(e^{i\vp}z_1))}}{z-z_1}\, \mu(z_1)\, d^2z_1\, d\tau\nonumber\\
&=&-2e^{i\vp}\int_{\C}\frac{\delta'(t-2\re(e^{i\vp}z_1))}{z-z_1}\, \mu(z_1)\, d^2z_1,\nonumber
\end{eqnarray}
with the integrals  interpreted in the sense of distributions. Note that since $t$ is dual to $\tau$, which is the (signed) length of a frequency variable,  for heuristic purposes $t$ may be thought of as  temporal.
\medskip

\subsection{Microlocal analysis of $\tw_1$}\label{subsec microlocal}

Fix $\Omega_0\subset\subset\Omega_2\subset\subset\Omega$ and assume once and for all that $supp(\mu)\subset\Omega_0$, 
i.e., $\sigma\equiv 1$ on $\Omega_0^c$.  Let $\Omega_1:=(\overline{\Omega_2})^c\supset\Omega^c\supset\partial\Omega$.
Then the map $T_1:\mathcal E'(\Omega_0)\to \mathcal D'(\Omega_1\times\R\times {\mathbb S}^1)$, defined by 
$$\mu(z_1)\longrightarrow (T_1\mu)(z,t,e^{i\vp}):=\tw_1(z,t,e^{i\vp}),$$
has Schwartz kernel
\begin{equation}\label{Schwartz}
K_1(z,t,e^{i\vp},z_1)=-2e^{i\vp}\frac{\delta'(t-2\re(e^{i\vp}z_1))}{z-z_1}.
\end{equation}

Note that $|z-z_1|\ge c>0$ for $z\in{\Omega_1}$ and $z_1\in\Omega_0$. 
For $z\in\partial\Omega$ and $z_1\in\Omega_0$, the factor $(z-z_1)^{-1}$ in (\ref{Schwartz})  is smooth, 
and $T_1$ acts on  $\mu\in\mathcal E'(\Omega_0)$ as a standard Fourier integral operator (FIO)  
(See \cite{Hor1971} for the standard facts concerning FIOs which we use.)  However, as we will see below, the amplitude $1/(z-z_1)$, although $C^\infty$,   both 
\medskip

\indent (i) accounts for the fall-off  rate in detectability of jumps, namely as  the inverse of the distance from the boundary; and
\smallskip

\indent (ii) causes artifacts, especially when  some singularities of $\mu$ are close to the boundary,
due to its large magnitude and the large gradient of its phase. 
\medskip

To see this, start by noting that the kernel $K_1$ is singular at the hypersurface,
\begin{equation}\label{Sigmas}
L:=\{(z,t,e^{i\vp},z_1):\, t-2\re(e^{i\vp}z_1)=0\}\subset \C\times\R\times {\mathbb S}^1\times\C,\nonumber
\end{equation}
Write $z=x+iy,\, z_1=x'+iy'$, and   use $\zeta,\,\zeta'$ to denote their dual variables, $(\xi,\eta),\, (\xi',\eta')$. Using the defining function 
$t-2\re(e^{i\vp}z_1)=t-2(\cos(\vp)x'-\sin(\vp)y')$, identifying $\C$ with $\R^2$ as above and ${\mathbb S}^1$ with $[0,2\pi)$, 
we see that the conormal bundle of $L$ is

\begin{eqnarray}\label{Lambdas}
\Lambda :=  N^*L 
& =&  \Big\{\Big(z,2\re(e^{i\vp}z_1),e^{i\vp},x',y';\, 0,0,\tau,\nonumber \\
  & &\qquad\qquad  2\tau\im(e^{i\vp}z_1),-2\tau e^{-i\vp}\Big): \\
 \qquad \qquad\quad & & \qquad\qquad z\in\Omega_1,\,z_1\in\Omega_0,\, e^{i\vp}\in\sone,\, \tau\in\R\setminus 0 \Big\},\nonumber 
\end{eqnarray}
which  is a Lagrangian submanifold of $T^*(\Omega_1\times\R\times {\mathbb S}^1\times\Omega_0)\setminus \nolinebreak0$. 
The kernel $K_1$   has the oscillatory representation,
\begin{equation}\label{K_0 osc int}
K_1(z,t,e^{i\vp},z_1)=\int_{\R} e^{i\tau\left(t-2\,\re\left(e^{i\vp}z_1\right)\right)} \frac{e^{i\vp}(i\tau)}{\pi(z-z_1)}\, d\tau,\, 
\end{equation}
interpreted in the sense of distributions.
The amplitude in (\ref{K_0 osc int}) belongs to the standard space of  symbols  $S^{1}_{1,0}$ on 
$(\Omega_1\times\R\times {\mathbb S}^1\times \Omega_0)\times (\R\setminus \nolinebreak0)$ \cite{Hor1971}. Thus, using H\"ormander's notation and orders for   Fourier integral (Lagrangian) distribution classes \cite{Hor1971}, $K_1$ is of order $1+\frac12-\frac04$, i.e., 
$K_1\in I^{0}(\Lambda)$.
We conclude that $T_1$ is an FIO of order 0 associated with  the canonical relation 
\begin{equation}\label{can_rel_T0_a}
C\subset \Big(T^*(\Omega_1\times\R\times {\mathbb S}^1)\setminus 0\Big)\times \Big(T^*\Omega_0\setminus 0\Big),
\end{equation}
written $T_1\in I^0(C)$, where
\begin{eqnarray}\label{Cjs}
C=\Lambda'&:=&\Big\{(z,t,e^{i\vp},\zeta,\tau,\Phi; z_1,\zeta_1): \\
& & \quad (z,t,e^{i\vp},z_1;\zeta,\tau,\Phi,-\zeta_1)\in\Lambda\Big\}.\nonumber
\end{eqnarray}
The wave front set of $K_1$ satisfies
$WF(K_1)\subset \Lambda$
(and actually, by the particular form of $K_1$, equality holds). 
Hence, by the H\"ormander-Sato lemma \cite[Thm. 2.5.14]{Hor1971}, 
$WF\big(T_1\mu\big)\subset C_0\circ WF(\mu),$
with $C$ considered as a set-theoretic relation from $T^*\Omega_0\setminus 0$ to $T^*(\Omega_1\times\R\times {\mathbb S}^1)\setminus 0$.

\medskip

We next consider the geometry of  $C$, parametrized as
\begin{eqnarray}\label{C1new}
C&=&\Big\{\big(z,2\re(e^{i\vp}z_1),e^{i\vp},0,\tau,2\tau\im(e^{i\vp}z_1); z_1,2\tau e^{-i\vp}\big):\nonumber \\
& & \qquad\quad z\in\Omega_1,\,z_1\in\Omega_0,\, e^{i\vp}\in\sone,\, \tau\in\R\setminus 0\Big\}.
\end{eqnarray}
$C$ is of dimension 6, while the natural projections to the left and right, $\pi_L:C\to T^*(\Omega_1\times\R\times {\mathbb S}^1)\setminus 0$ 
and $\pi_R:C\to T^*\Omega_0\setminus 0$, are into spaces of dimensions 8 and 4, resp.  $C$ satisfies the Bolker condition \cite{Gu,GuSt}:
$\pi_L$ is an  immersion (which is equivalent with $\pi_R$ being a submersion) and is globally  injective. 

{\revised However, $C$ in fact satisfies a much stronger condition  than the Bolker condition: the geometry of $C$ is  \emph{independent} of $z\in\Omega_1$,
and it is a canonical graph in the remaining variables.} 
If for any $z_0\in\Omega_1$ we set $K_1^{z_0}=K_1|_{z=z_0}$, then  one can factor $C=0_{T^*\Omega_1}\times C_0$ 
(with the obvious reordering of the variables), where $0_{T^*\Omega_1}$ is the zero-section of $T^*\Omega_1$ and 
\begin{eqnarray}\label{C zero sharp}
C_0&:=& WF(K_1^{z_0})'\nonumber \\
&=& \Big\{\big(2\re(e^{i\vp}z_1),e^{i\vp},\tau,2\tau\im(e^{i\vp}z_1);z_1,2\tau e^{-i\vp}\big)\nonumber \\
& & \qquad\qquad  : 
z_1\in\Omega_0,\, e^{i\vp}\in\sone,\, \tau\in\R\setminus 0\Big\} \\
&\subset& \big(T^*(\R\times {\mathbb S}^1)\setminus 0\big)\times \big(T^*\Omega_0\setminus 0\big).\nonumber
\end{eqnarray}
(Note that $C_0=N^*L_0'$, where 
$$L_0=\{(t,e^{i\vp},z_1)\in\R\times\sone\times\C: t-2\re(e^{i\vp}z_1)=0\}.)$$
{\revised From (\ref{C1new}),(\ref{C zero sharp}) one can see that $C$ satisfies the Bolker condition, but its product structure is in fact much more stringent.}

Hence, it is reasonable to form determined (i.e., 2D)  data sets from two-dimensional slices of the full $T_1$ by fixing  $z=z_0$; 
for these to correspond to boundary measurements,  assume that $z_0\in\partial\Omega\subset\Omega_1$. 
Thus, define 
$T_1^{z_0}:\mathcal E'(\Omega_0)\to \mathcal D'(\R\times {\mathbb S}^1)$ by
$\mu(z_1)\longrightarrow (T_1^{z_0}\mu)(t,\vp):= \tw_0(z_0,t,\vp)$.
%\linebreak
$T_1^{z_0}$ has Schwartz kernel $K_1^{z_0}$ given by (\ref{K_0 osc int}), but with $z$ fixed at $z=z_0$, 
and  thus   $T_1^{z_0}$ is an FIO of order $1+\frac12-\frac44=\frac12$ with canonical relation $C_0$, i.e.,  $T_1^{z_0}\in I^{\frac12}(C_0)$.
Further, one easily checks from (\ref{C zero sharp}) that \linebreak$\pi_R:C_0\to T^*\Omega_0\setminus 0$ 
and $\pi_L:C_0\to    T^*(\R\times {\mathbb S}^1)\setminus 0$ are  local diffeomorphisms, injective if we either restrict $\tau>0$ or $\phi\in [0,\pi)$,  
in which case $C_0$  becomes a global canonical graph. 
\smallskip

Composing $T_1^{z_0}$ with the backprojection operator $(T_1^{z_0})^*$ then yields,
 by the transverse intersection calculus for FIOs \cite{Hor1971}, a normal operator $(T_1^{z_0})^*T_1^{z_0}$ 
 which is a $\Psi$DO of order 1 on $\Omega_0$, i.e., $(T_1^{z_0})^*T_1^{z_0}\in\Psi^{1}(\Omega_0)$.  
 We will show that the normal operator is elliptic and thus admits a left parametrix, $Q(z,D)\in \Psi^{-1}(\C)$,
so that 
\begin{equation}\label{parametrix}
Q(T_1^{z_0})^*T_0^{z_0}-I\hbox{  is a smoothing operator on }\mathcal E'(\Omega_0).
\end{equation}
Therefore, $T_1^{z_0}\mu$ determines $\mu$ mod $C^\infty$, 
making it possible to determine the singularities of the Beltrami multiplier $\mu$, 
and hence those of  the conductivity $\sigma$, from the singularities of $T_1^{z_0}\mu$. 
All of this follows from standard arguments once one shows that $T_1^{z_0}$ is an elliptic FIO. 

To  establish this ellipticity,  we may, because $z_0-z_1\ne 0$ for $z_1\in \Omega_0$, calculate the principal symbol $\sigma_{prin}(T_1^{z_0})$ using (\ref{Schwartz}). 
At a point of $C_0$, as given by the parametrization (\ref{C zero sharp}), we may calculate the induced symplectic form $\varkappa_{C_0}$ on $C_0$,

\begin{eqnarray}\label{Omega}
\varkappa_{C_0}&:=&\pi_R^*(\varkappa_{T^*\Omega_0})\nonumber \\
&\,=& -2\tau\, d\vp\wedge (s(\vp)dx' + c(\vp) dy')\\
& & +2d\tau\wedge (c(\vp)dx'- s(\vp) dy'),\nonumber
\end{eqnarray}
so that $\varkappa_{C_0}\wedge\varkappa_{C_0}=4\tau d\vp\wedge d\tau\wedge dx'\wedge dy'$, and the half density 
$$|\varkappa_{C_0}\wedge\varkappa_{C}|^\frac12=2|\tau|^\frac12 |d\vp\wedge d\tau\wedge dx'\wedge dy|^\frac12.$$
From this it follows that
$$\sigma_{prin}(T_1^{z_0})=\frac{-2e^{i\vp}(i\tau)}{2|\tau|^\frac12(z_0-z_1)}=\frac{(-i e^{i\vp})\, sgn(\tau)|\tau|^\frac12}{z_0-z_1},$$
which is elliptic of order $1/2$ on $C_0$.

\medskip

\noindent{\bf Example.} Although  (\ref{parametrix}) allows imaging  of general $\mu\in\mathcal E'(\Omega_0)$ 
from $\omega_1(z_0,\cdot,\cdot)$, consider the particular  case where $\mu$ is 
a piecewise smooth function with jumps across an embedded smooth curve $\gamma=\{z: g(z)=0\}\subset\Omega_0$ 
(not necessarily closed or connected), with unit normal $n$. 
In fact, consider the somewhat more general case of a $\mu$ which  is {\em conormal of order} $m\in\R,\, m\le -1,$ with respect to $\gamma$, i.e., is of the form
\be\label{mu osc int}
\mu(z)=\int_\R e^{ig(z)\theta}\, a_m(x,\theta)\, d\theta,
\ee
where $a_m$ belongs to the standard symbol class 
$S^m_{1,0}\left(\Omega_0\times \left(\R\setminus 0\right)\right)$. (In general, we will denote the orders or bi-orders 
of symbols by subscripts.) A $\mu$ which is {\mltext a piecewise smooth function} with jumps across $\gamma$ is of this form for $m=-1$; for 
$-2<m<-1$, a $\mu$ given by (\ref{mu osc int}) is {\mltext piecewise smooth}, as well as  H\"older continuous of order $-m-1$ across $\gamma$. 
(Recall that uniqueness in the Calder\'on problem for $C^\omega$ {\mltext piecewise smooth} conductivities was treated in \cite{KV} 
and some cases of conormal conductivities in \cite{GLU,Kim}.)
As a Fourier integral distribution, $\mu\in I^{m}(\Gamma)$ for the Lagrangian  manifold,
\be\label{Gamma}
\Gamma:=N^*\gamma=\big\{\left(z_1,\theta\, n\left(z_1\right)\right): z_1\in\gamma,\, \theta\in\R\setminus 0\big\}\subset T^*\Omega_0\setminus 0.
\ee
By the transverse intersection calculus, $T_1^{z_0}\mu\in I^{m+\frac12}(\widetilde\Gamma)$, where
\begin{eqnarray}\label{TildeGamma}
\widetilde\Gamma &:=& C\circ \Gamma= \Big\{\big(2\re(e^{i\vp}z_1),e^{i\vp},\tau,2\tau \im(e^{i\vp}z_1)\big):\nonumber \\
& & \qquad\quad z_1\in\gamma,\, e^{i\vp}=\overline{n(z_1)},\, \tau\in\R\setminus 0\Big\}\subset T^*(\R\times {\mathbb S}^1)\setminus 0.\nonumber
\end{eqnarray}
Thus, for $\vp$ fixed, $T_1^{z_0}\mu$ has singularities at those values of $t$ of the form $t=2\re(e^{i\vp}z_1)$ with $z_1$ ranging over the 
points of $\gamma$ with $n(z_1)=e^{-i\vp}$. (Under a finite order of tangency condition on $\gamma$, for each $\vp$ there are only a finite 
number of such points.) These values of $t$ depend on $\vp$ but \emph{are independent of} $z_0\in\partial\Omega$;
this reflects the complex principal type geometry underlying the problem, which has propagated the singularities of $\mu$ out to \emph{all} of 
the boundary points of $\Omega$.
Denoting these values of $t$ by $t_j(e^{i\vp})$, the distribution $T_1^{z_0}\mu$  has Lagrangian 
singularities conormal of order $m+\frac12$ on $\R$ at $\{t_j\}$, and thus  is of 
magnitude  $\sim |t-t_j|^{-m-\frac32}$ for $-\frac32<m\le -1$. In particular, if $\mu$ is {\mltext piecewise smooth} with jumps, 
for which $m=-1$,
the singularities have magnitude $\sim |t-t_j|^{-\frac12}$.
\medskip

\noindent{\bf Remark.} More generally, since $T_1^{z_0}$ is an elliptic FIO of order 1/2 associated to a canonical graph, 
if we denote the $L^2$-based Sobolev space of order $s\in\R$ by $H^s$, it follows that  if $\mu\in H^s\setminus H^{s-1}$, 
then $T_1^{z_0}\mu\in H^{s-\frac12}\setminus H^{s-\frac32}$, allowing us to image general singularities of $\mu$ and hence $\sigma$.

\subsection{`Averages' of $\tw_1$ and artifact removal}\label{Weighted averages and the Radon transform}

As described above, each $T_1^{z_0}\in I^{\frac12}(C_0)$; the  symbol  depends on $z_0$, the canonical 
relation (\ref{C zero sharp}) does not, and we now take advantage of this. 
For any $\C$-valued weight $a(\,\cdot\,)$ on $\partial\Omega$, define
\begin{equation}\label{omega a}
\tw_1^a(t,e^{i\vp}) := \int_{\partial\Omega} \tw_1(z_0,t,e^{i\vp})\, a(z_0)\, d\mathbf{z_0},
\end{equation}
and denote by $T_1^a$ the operator taking $\mu(z_1)\to\tw_1^a(t,e^{i\vp})$. (It will be clear from context 
when the superscript is a point $z_0\in\partial\Omega$ and when it is a function $a(\cdot)$ on the boundary.) 
(We emphasize that (\ref{omega a}) is a complex line integral.)
%\footnote{\revised As a reminder, $d^1\mathbf x$  denotes arc length measure on a curve, while $d^2z$ is area measure.}. 
Then $T_1^a$ has kernel
\begin{eqnarray}\label{K a}
K_1^a(t,e^{i\vp},z_1)&:=& -2e^{i\vp}\Big[ \int_{\partial\Omega} \frac{a(z_0)\, d\mathbf{z_0}}{z_0-z_1}\Big] \delta'(t-2\re(e^{i\vp} z_1)\big)\nonumber \\
& = & -4\pi i e^{i\vp} \alpha(z_1)\delta'(t-2\re(e^{i\vp}z_1)),
\end{eqnarray}
where 
$$\alpha(z_1)=\frac1{2\pi i}\int_{\partial\Omega} \frac{a(z_0)\, d\mathbf{z_0}}{z_0-z_1},\quad z_1\in\Omega,$$
is the Cauchy (line) integral of $a$. 
We thus have 
$$\sigma_{prin}(T_1^a)=2\pi e^{i\vp}\alpha(z_1)sgn(\tau) |\tau|^\frac12\hbox{ on } C_0,$$
and therefore $(T_1^a)^* T_1^a\in\Psi^1(\Omega_0)$, with 
$$\sigma_{prin}\big((T_1^a)^* T_1^a\big)(z,\zeta)=2\pi^2 |\alpha(z)|^2|\zeta|,$$
since, by (\ref{C zero sharp}), $|\tau|=\frac12|\zeta'|$ on $C_0$. Thus,
$$(T_1^a)^* T_1^a=2\pi^2|\alpha|^2\cdot |D_{z}|\hbox{ mod }\Psi^0(\Omega_0).$$ 
By choosing 
$a\equiv (\pi\sqrt2)^{-1}$ in (\ref{omega a}), one has $\alpha\equiv (\pi\sqrt2)^{-1}$ on $\Omega$ and \linebreak
$\sigma_{prin}((T_1^a)^* T_1^a)(z,\zeta)=|\zeta|$, yielding 
\begin{equation}\label{t0a inversion2}
(T_1^a)^* T_1^a=|D_{z}|\hbox{ mod }\Psi^0,
\end{equation}
which faithfully reproduces the locations of the singularities of $\mu$ and accentuates their strength by one derivative.  
This is,  in the context of {\mltext our reconstruction method}, an  analogue of local (or $\Lambda$)-tomography \cite{Faridani1992} .
\medskip

Alternatively (now with the choice of $a=1/\pi$\,),  one may obtain an exact \emph{weighted, filtered backprojection} inversion formula,
\begin{equation}\label{t0a inversion}
(T_1^a)^*(|D_t|^{-1})T_1^a=I\hbox{ on } L^2(\Omega_0).
\end{equation}
On the level of the principal symbol, this follows from the microlocal analysis above, again since $|\tau|=\frac12|\zeta'|$ on $C_0$; for the exact result, note that
\begin{equation}\label{t0a expl}
T_1^a=-\big(\frac{i \pi }{\sqrt2}\big) e^{i\vp} \big(\frac{\partial}{\partial s}R\mu\big)(\frac12 t,e^{i\vp}),
\end{equation}
where $R$ is the standard Radon transform on $\R^2$, 
$$(Rf)(s,\omega)=\int_{\mathbf x\cdot\omega=s}f(\mathbf x)\, d^1\mathbf x,\, (s,\omega)\in\R\times \mathbb S^1.$$
\bigskip

\noindent{\bf Remark.}
Note that if we take $\Omega=\mathbb D$, so that $\partial\Omega$ can be parametrized by $z_0=e^{i\theta}$, then (\ref{omega a}) becomes
\begin{equation}\label{omega circle}
\tw_1^a(t,e^{i\vp})= \int_0^{2\pi} \tw_1(e^{i\theta},t,e^{i\vp})\, ie^{i\theta}\,d\theta.\nonumber
\end{equation}
Thus, the weight is (slowly) oscillatory 
when expressed in terms of $d\theta$, but through destructive interference suppresses the artifacts present in each 
individual $\tw_1^{z_0}$. {Fig. \ref{zeroth_aver}} illustrates, with a skull/hemorrhage phantom how using this simple
weight removes the artifacts caused by the  rapid change in the amplitude and phase of the Cauchy 
factor $(z_0-z_1)^{-1}$, {shown in Fig. \ref{zeroth_phantom}}.

\begin{figure}
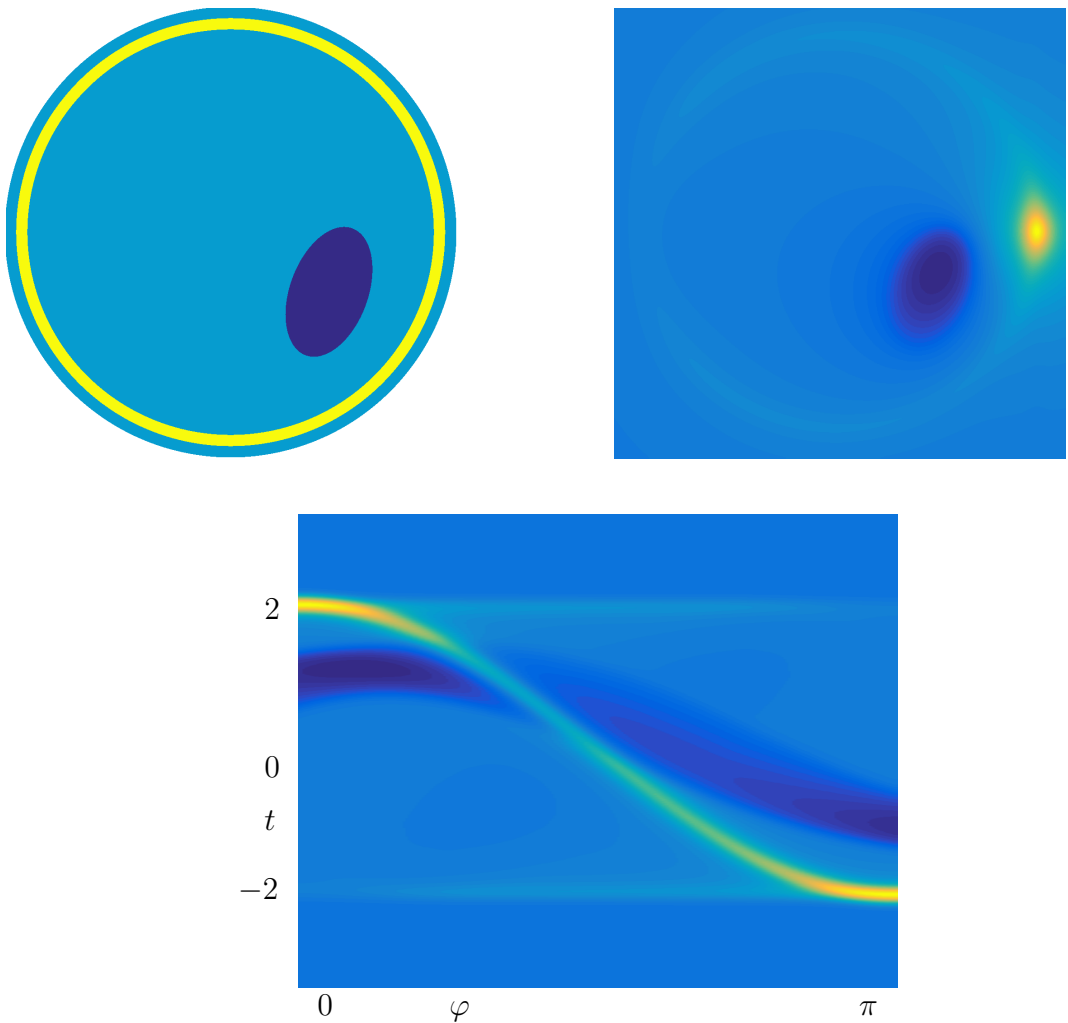

\begin{picture}(200,400)
%\put(-120,220){\includegraphics[width=6cm]{mu_ex3_gray.eps}}
\put(-120,220){\includegraphics[width=6cm]{mu_ex3_gray.pdf}}
%\put(110,220){\includegraphics[width=6cm]{recon0_1.eps}}
\put(110,220){\includegraphics[width=6cm]{recon0_1.pdf}}
\put(-10,20){\includegraphics[width=8cm]{T1_1_gray.eps}
\put(-170,-10){$\varphi$}
\put(-220,-10){$0$}
\put(-15,-10){$\pi$}
\put(-240,80){$0$}
\put(-240,60){$t$}
\put(-250,34){$-2$}
\put(-240,140){$2$}
}
\end{picture}
\caption{{\bf Artifacts from a single $T_1^{z_0}$.} Top left: Phantom  modeling hemorrhage (high conductivity inclusion)  
within skull (low conductivity shell). Bottom: $T_1^{z_0}\mu$ for $z_0 = 1$. 
Top right: backprojection  applied to $T_1^{z_0}\mu$.}\label{zeroth_phantom}
\end{figure}

%\newpage

\begin{figure}
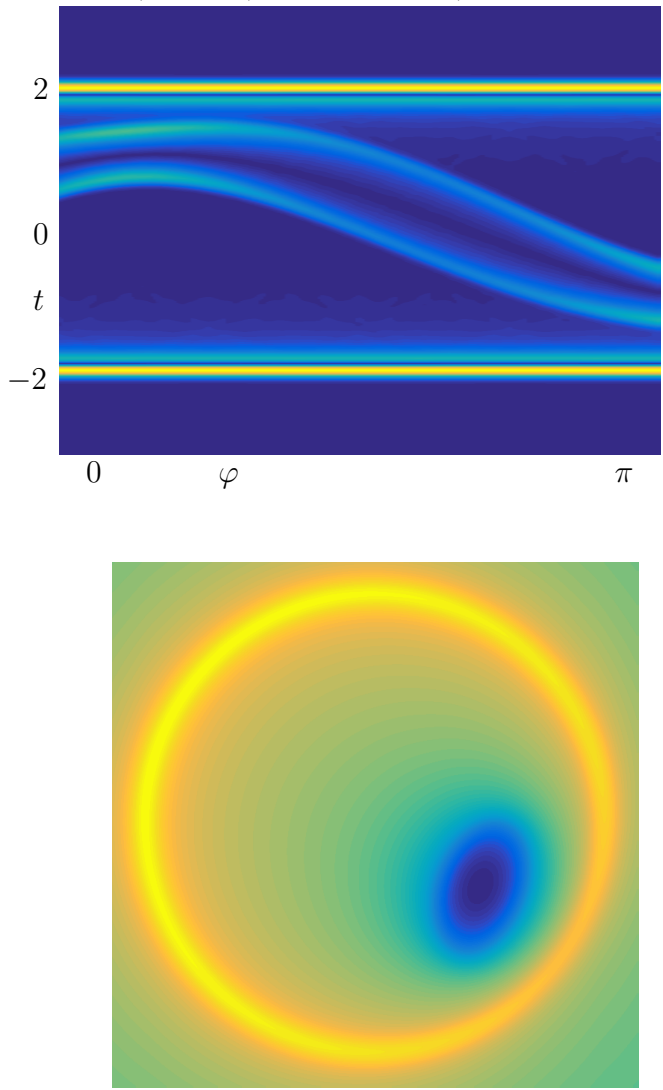

\begin{picture}(200,400)
\put(-20,240){\includegraphics[width=8cm]{sinogram0_gray.eps}}
\put(0,0){\includegraphics[width=7cm]{recon0_gray.eps}}
\put(40,230){$\varphi$}
\put(-10,230){$0$}
\put(190,230){$\pi$}
\put(-30,320){$0$}
\put(-30,295){$t$}
\put(-40,265){$-2$}
\put(-30,375){$2$}
\end{picture}
\caption{{\bf Artifact removal using weighted $T_1^a$.} Top: $T_1^{a}\mu$ for  phantom  in Fig. \ref{zeroth_phantom}. 
Bottom: reconstruction from $T_1^{a}\mu$ using formula \eqref{t0a inversion}.}\label{zeroth_aver}
\end{figure}

\section{Analysis of $\tw_2$}\label{Analysis of the first-order term}

Just as the introduction of polar coordinates and partial Fourier transform, applied to zeroth order term in the Neumann expansion 
{\agt (i.e., the Fr\'echet derivative of the scattering map at $\mu=0$),} 
give rise to a term  linear  in $\mu$, their application to the first order term (\ref{u one}) gives rise to a term which is  bilinear in $\mu$. wave front set analysis shows that this nonlinearity gives rise to two distinct types of singularities; we will see in \S\ref{Computational studies} that both of these are  visible in the numerics, and need to be taken into account to give good reconstruction based on $\tw_1^a$.
\medskip

We can rewrite (\ref{u one}) as
$$\omega_2(z,k)=P\big(\alpha(\overline{P}\oal)\big) + P\big(\nu(\overline{S}\oal)\big).$$
where the linear operators $\overline{P},\, \overline{S}$ are defined by $\overline{P}(f)=\overline{P(\overline{f})}$ and $\overline{S}(f)=\overline{S(\overline{f})}$. The kernels of 
$\overline{P},\, \overline{S}$ are just the complex conjugates of the kernels of $P,\, S$ in (\ref{CauchyPdef}), (\ref{BeurlingSdef}), resp.
We now denote the two interior variables in $\Omega_0$ by $z_1$ and $z_2$; using (\ref{def:APalphabar}), one  sees that

\begin{eqnarray}\label{omega one}
\omega_2(z,k)&=& \frac{-k^2}{\pi^2}\int_\C\int_\C \frac{e^{-2i\re(kz_1)}\mu(z_1)}{z_1-z}\frac{e^{2i\re(kz_2)}\mu(z_2)}{\ozpp-\ozp}\, d^2z_1\, d^2 z_2\nonumber \\
& & + \frac{ik}{\pi^2}\int_\C\int_\C \frac{e^{-2i\re(kz_1)}\mu(z_1)}{z_1-z}\frac{e^{2i\re(kz_2)}\mu(z_2)}{(\ozpp-\ozp)^2}\, d^2z_1\, d^2 z_2.
\end{eqnarray}
Thus, for $z_0\in\partial\Omega$,
\begin{eqnarray}\label{omega one tilde}
\tw_2(z_0,t,e^{i\vp})&=& \int_\R e^{-it\tau}\omega_1(z_0,\tau e^{i\vp})\, d\tau\nonumber \\
&=& \int_\C\int_\C K_1(z_0,t,e^{i\vp};z_1,z_2)\, \mu(z_1)\, \mu(z_2)\, d^2z_1\, d^2z_2
\end{eqnarray}
is given by a bilinear operator acting on $\mu\otimes\mu$, with kernel
\begin{eqnarray}\label{K one}
K_2^{z_0}(t,e^{i\vp};z_1,z_2) &=&\frac1{\pi^2}\Big(\frac{e^{2i\vp}\delta''(t+2\re(e^{i\vp}(z_1-z_2)))}{(z_1-z_0)(\ozpp-\ozp)}\nonumber \\
& &\qquad +\frac{e^{i\vp}\delta'(t+2\re(e^{i\vp}(z_1-z_2)))}{(z_1-z_0)(\ozpp-\ozp)^2}\Big).
\end{eqnarray}

$K_2^{z_0}$ has multiple  singularities, but, as in the case of $K_1$, the fact  that $|z_1-z_0|\ge c>0$ 
for $z_0\in\partial\Omega$ and $z_1\in supp(\mu)\subset\Omega_0$ eliminates the singularities at  $\{z_1-z_0=0\}$. 
The remaining singularities put $K_2^{z_0}$ in the general class of paired Lagrangian distributions introduced 
in \cite{MelroseU1979,GuilleminU1981}. In fact, $K_2^{z_0}$  lies in a more restrictive class of {\em nested conormal distributions} (see \cite{GreenleafU1990}), 
associated with the pair (independent of $z_0$), 

\begin{eqnarray}\label{L_1 and 3}
L_1&:=&\{t+2\re(e^{i\vp}(z_1-z_2))=0\} \nonumber \\
&\, \supset  & L_3 := \{t+2\re(e^{i\vp}(z_1-z_2))=0,\, z_1-z_2=0\} \\
& & \quad \, \, \, \, =  \{ t=0,\, z_2=z_1 \}. \nonumber
\end{eqnarray}
(The subscripts are chosen to indicate the respective codimensions in 
$\R_t\times {\mathbb S}^1_{\vp}\times\Omega_{0,\,z_1}\times\Omega_{0,\, z_2}$.)
These submanifolds have conormal bundles,
$$
\Lambda_1:= N^*L_1,\, \Lambda_3:= N^*L_3\subset 
T^*(\R_t\times {\mathbb S}^1_{\vp}\times\Omega_{0,\,z_1}\times\Omega_{0,\, z_2})\setminus 0,
$$
and $WF(K_2^{z_0})\subseteq\Lambda_1\cup\Lambda_3$. 
(As with $K_1^{z_0}$,   one can show from (\ref{K one}) that equality holds.)

\subsection{Bilinear wave front set analysis}\label{Bilinear wave front set analysis}

Define $\tw_2^{z_0}=\tw_2|_{z=z_0}$.
Since $\tw_2^{z_0}(t,e^{i\vp})=\langle K_2^{z_0}(t,e^{i\vp},\cdot,\cdot),\mu\otimes\mu\rangle$, we have
\begin{eqnarray}\label{WF tw1}
WF(\tw_2^{z_0})\subset WF(K_2^{z_0})'\circ WF(\mu\otimes\mu)\subset (\Lambda_1'\cup\Lambda_3')\circ WF(\mu\otimes\mu).\nonumber
\end{eqnarray}
Parametrizing $\Lambda_1,\, \Lambda_3$ in the usual way as conormal bundles, 
multiplying the variables dual to $z_1,z_2$ by $-1$ and then separating the variables on the left and right, we obtain  canonical relations in $T^*(\R\times {\mathbb S}^1)\times T^*(\Omega_0\times\Omega_0)$, 

\begin{eqnarray}\label{C1}
\quad C_1&:=& \Lambda_1'=\Big\{\Big(-2\re(e^{i\vp}(z_1-z_2)),e^{i\vp},\tau,-2\tau\im(e^{i\vp}(z_1-z_2));\nonumber\\
& &  \qquad\qquad\qquad z_1,z_2,-2\tau e^{i\vp},2\tau e^{i\vp}\Big):  \\
 & &  \qquad\qquad\qquad e^{i\vp}\in {\mathbb S}^1,\, z_1,z_2\in\Omega_0,\, \tau\in\R\setminus 0\Big\},\hbox{ and} \nonumber \\
 & &  \nonumber \\
 \qquad C_3&:=& \Lambda_3'=\Big\{\Big(0,e^{i\vp},\tau,0; z_1,z_1,\zeta,-\zeta\Big):\nonumber\\
 & &  \qquad\qquad\quad e^{i\vp}\in {\mathbb S}^1,\, z_1\in\Omega_0,\, (\tau,\zeta)\in\R^3\setminus 0\Big\}\label{C3}.
\end{eqnarray}
\bs

Representing $\mu\otimes\mu=\mu(z_1)\mu(z_2)$ as $(\mu\otimes 1)\cdot (1\otimes\mu)$; 
from a basic result concerning wave front sets of products \cite[Thm. 2.5.10]{Hor1971}, one sees that
\begin{eqnarray}\label{WF mu times mu}
WF(\mu\otimes\mu)&\subseteq& WF(\mu\otimes 1)\cup WF(1\otimes \mu)\cup \big(WF(\mu\otimes 1)+WF(1\otimes\mu)\big)\nonumber  \\
&\subseteq& \big(WF(\mu)\times O_{T^*\Omega_0}\big)\cup \big( O_{T^*\Omega_0}\times WF(\mu)\big)\nonumber  \\
& & \quad \cup\, \big(WF(\mu)\times WF(\mu)\big),
\end{eqnarray}
where the sets are  interpreted as subsets of $T^*\C^2\setminus 0$, writing elements as either $(z_1,z_2;\zeta_1,\zeta_2)$ or $(z_1,\zeta_1;z_2,\zeta_2)$.

Since $\zeta_1\ne0$, $\zeta_2\ne0$ at all points of $C_1$,  and $\zeta_1=0\iff \zeta_2=0$ on $C_3$,  the relation $C_1\,\cup\, C_3$, when  applied to the first two terms on the RHS of (\ref{WF mu times mu}),  gives the empty set.     
%\ms

On the other hand, $C_1\,\cup\, C_3$, when  applied to  $WF(\mu)\times WF(\mu)$, contributes nontrivially to $WF(\tw_2^{z_0})$.
First, the application of $C_3$ gives 
\begin{eqnarray}\label{C_3 on WF mu}
 \qquad  \Big\{\big(0,e^{i\vp},\tau,0)\big):
 \exists \, z_1 \hbox{ s.t. } (z_1,\tau e^{-i\vp})\in WF(\mu)\Big\}\subset N^*\big\{t=0\big\}. 
\end{eqnarray}
Secondly, $C_1$ yields a contribution to $WF(\tw_2^{z_0})$ contained in what we call
the CGO \emph{two-scattering} of $\mu$, defined by 
\begin{eqnarray}\label{C_1 on WF mu}
Sc^{(2)}(\mu)&:= &  \Big\{\big(-2\re(e^{i\vp}(z_1-z_2)),e^{i\vp},\tau,-2\tau \im(e^{i\vp}(z_1-z_2))\big):\nonumber  \\
& & \qquad   \exists\, z_1,z_2\in\Omega_0\hbox{ s.t. } (z_1,\tau e^{-i\vp}),(z_2,-\tau e^{-i\vp})\in WF(\mu)\Big\}. 
\end{eqnarray}
Thus, pairs of points  in $WF(\mu)$ with spatial coordinates $z_1,\, z_2$ and  antipodal covectors $\pm\tau  e^{-i\vp}$ give rise 
to elements of
$WF(\tw_2^{z_0})$  at  \lb$t=-2\re(e^{i\vp}(z_1-z_2))$. 
Note that the expression in (\ref{C_3 on WF mu}) is not necessarily contained in $Sc^{(2)}(\mu)$, 
even if we allow $z_1=z_2$ in (\ref{C_1 on WF mu}), since $WF(\mu)$ is not necessarily symmetric 
under $(z,\zeta)\to (z,-\zeta)$ (although this does hold for $\mu$ which are smooth with jumps).
\ms

For later use, it is also be convenient  to define 
\be\label{Res01}
Sc^{(0)}(\mu):=N^*\{(t,e^{i\vp}): t=0\}\hbox{ and } Sc^{(1)}(\mu):= C_0\circ WF(\mu),
\ee
where $C_0$ is as in (\ref{C zero sharp}) above, so that the wave front set analysis so far can be summarized as,
\be\label{WFtw01}
WF(\tw_1)\subset Sc^{(1)}(\mu)\hbox{ and }WF(\tw_2)\subset Sc^{(0)}(\mu)\cup Sc^{(2)}(\mu).
\ee
This is extended to general $WF(\tw_n)$ in (\ref{WFtwn}) below.
\ms

\noindent{\bf Remarks.}

\begin{enumerate}

\item Note that if the $\tw_2^{z_0}$ are averaged out using a function $a(z_0)$ on $\partial\Omega$ as was done for $\tw_1$, 
the wave front analysis above is still valid for the resulting $\tw_2^a$, and we will refer to either as simply $\tw_2$ in the following discussion.
\ms

\item It follows from (\ref{C_3 on WF mu}) that for any $\mu$ with $\mu\notin C^\infty$, and any $z_0\in\partial\Omega$, 
we always will see singularities of $\tw_2$ at $t=0$. The only dependence  on $\mu$  of these artifacts in $WF(\tw_2)$ 
is the  question of for which incident directions $\vp$ of the complex plane wave do they occur, as dictated by (\ref{C_3 on WF mu}).
\ms

\item In addition, by (\ref{C_1 on WF mu}), any  spatially separated singularities of $\mu$ with antipodal covectors $\pm\zeta=\pm(\xi,\eta)$ 
give rise to singularities  of $\tw_2$ at  $t=-2\re(e^{i\vp}(z_1-z_2)),  \vp=-\arg(\zeta)$. Under translations, neither 
the covectors nor the differences $z_1-z_2$ associated to such scatterings change, although the  factor $(z_1-z)^{-1}$ in the kernel (\ref{K one}), 
which is evaluated at $z=z_0$,  does. Hence,  the locations and orders of
these artifacts (but not their magnitude or phase)  
are essentially independent of translations  within $\Omega_0$ of inclusions present in  $\mu$.

\end{enumerate}
\bigskip 

Given the invertibility of $T_1^{a}$ mod $C^\infty$ (at least for constant weight $a(\cdot)$), from the point of view of
{\mltext our reconstruction method,} the singularities  of $\tw_2^{a}$ at $t=0$ and at $Sc^{(2)}(\mu)$, although part of $\tw$, 
produce artifacts which  interfere with reconstruction of the singularities of $\mu$ and should either  be better characterized or filtered out. 
In  the next subsection, we do the former for a class  of $\mu$ which includes those which are {\mltext piecewise smooth} with jumps.

\subsection{Bilinear operator theory}\label{Bilinear operator theory}

Not only is  $WF(K_2^{z_0})\subset\Lambda_1\,\cup\,\Lambda_3$, but in fact $K_1^{z_0}$ belongs to the class of 
nested conormal distributions  associated with the pair $L_1\supset L_3$ (see \cite{GreenleafU1990}), and thus to the  
Lagrangian distributions associated with the cleanly intersecting pair $\Lambda_1,\, \Lambda_3$: 
$$K_2^{z_0}\in 
I^{1,0}(\Lambda_1,\Lambda_3)+I^{1,-1}(\Lambda_1,\Lambda_3).$$
Any $K_2^a$ is a linear superposition of these and thus belongs to the same class.
The \emph{linear} operators $T_2^{z_0},\, T_2^a:\mathcal E'(\Omega_0\times\Omega_0)\to\mathcal D'(\R\times
\mathbb S^1)$ with Schwartz kernels $K_2^{z_0},\, K_2^a$, resp., which we will refer to simply as $T_2$, thus 
belong to  a sum of spaces of singular Fourier integral operators,
$I^{1,\, 0}(C_1,C_3)+I^{1,\, -1}(C_1,C_3)$, and  have some similarity to singular Radon transforms (\cite{PhSt}; 
see also \cite{GreenleafU1990}), but (i)  this underlying geometry has to our knowledge not been studied before; 
and (ii) we are interested in {\em bilinear} operators with these kernels. 
Rather than pursuing optimal bounds for $T_2$ on function spaces,
we shall focus on the goal of characterizing the singularities of $\tw_2$ when $\mu$ is piecewise smooth with 
jumps. We will show that, away from $t=0$, $\tw_2$ is 1/2 derivative smoother than $\tw_1$. 
On the other hand,  at 
$t=0$ it is possible for $\tw_2$  to be as singular as the strongest  singularities of $\tw_1$; 
this is present in the full $\tw$ (computed from the DN data) and produces strong 
artifacts, which can be seen in numerics  when attempting to  reconstruct $\mu$. For this reason, data should be 
either preprocessed by filtering out a neighborhood of $t=0$ before applying backprojection, or alternatively resort to the subtraction techniques discussed in Section \ref{sec evenodd}.
\medskip

It will be helpful to  work (as with the example (\ref{mu osc int}) above) in the slightly greater generality of distributions (still denoted $\mu$) that are  
  conormal for a curve $\gamma\subset\Omega_0$, having
an oscillatory integral representation such as (\ref{mu osc int}) with an amplitude of some order $m\in\R$.
For such a $\mu$ (even for one not coming from a conductivity), we may still define both $\tw_1^{z_0}$ and $
\tw_1^a$ (denoted generically by $\tw_1$), and they  belong to $I^{m+\frac12}(\widetilde\Gamma)$, where  $
\widetilde\Gamma=C_0\circ N^*\gamma\subset T^*(\R\times\sone)\setminus 0$ is as in (\ref{TildeGamma}). 
We  also define $\tw_2:= T_2^{z_0}(\mu\otimes\mu)$ or $T_2^{a}(\mu\otimes\mu)$. 
\ms

To make the microlocal analysis of $\tw_2$  tractable, we now impose a  curvature condition on $\gamma$: 
Since $\nabla g(z)\perp T_z\gamma$ at a point $z\in\gamma$, we have $i\nabla g(z)\in T_zg$; thus, $\gamma$ has nonzero Gaussian curvature at $z$ iff
\begin{equation}\label{curv}
(i\nabla g(z))^t\nabla^2g(z)(i\nabla g(z))\ne 0,
\end{equation}
which {\em we henceforth assume  holds at all points of $\gamma$ }(or at least at all $z\in \hbox{ sing supp }\mu\subset\gamma$, which is all that matters). 
\bs

Note that (\ref{curv}) implies the finite order tangency condition referred to in the Example of \S\ref{subsec microlocal}, 
so that for each $e^{i\vp}\in\mathbb S^1$, $\tw_0(\cdot,e^{i\vp})$ is singular at a finite number of values $t=t_j(e^{i\vp})$.
\ms

\begin{theorem}\label{thm omegaone}
Under the curvature assumption (\ref{curv}),

(i)  $Sc^{(2)}(\mu)$,  defined as in (\ref{C_1 on WF mu}), is a smooth Lagrangian manifold  in $T^*(\R\times\sone)\setminus 0$; and 
\ms

(ii) if $\mu$ is  as in (\ref{mu osc int}) for some $m\in\R$, then 
\be\label{twone claim}
\tw_2=T_2^{z_0}(\mu\otimes\mu)
\in I^{2m+\frac32,-\frac12}\big(Sc^{(2)}(\mu),Sc^{(0)}(\mu)\big).
\ee
\end{theorem}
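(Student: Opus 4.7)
The statement splits into two parts, and the plan is to treat them in order.

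For part (i), I would verify that $Sc^{(2)}(\mu)$ can be realized as a clean composition of a Lagrangian with a canonical relation. Namely, interpreting the definition (\ref{C_1 on WF mu}), one has $Sc^{(2)}(\mu) = C_1 \circ \big(N^*\gamma \times N^*\gamma\big),$ where $C_1$ is given in (\ref{C1}). Points in the composition correspond to pairs $(z_1,z_2)\in\gamma\times\gamma$ whose normals $n(z_1), n(z_2)$ are antiparallel and aligned with $e^{-i\vp}$ (up to sign). The curvature hypothesis (\ref{curv}) says exactly that the Gauss map $\gamma\to \sone$ is a local diffeomorphism, hence for generic $e^{i\vp}\in\sone$ there are finitely many pairs $(z_1(\vp),z_2(\vp))$, varying smoothly in $\vp$ on each branch. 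From this one reads off a smooth parametrization of $Sc^{(2)}(\mu)$ by $(\vp,\tau)$, yielding a 2-dimensional conic set in $T^*(\R\times\sone)\setminus 0$, and a direct computation (or the general composition result: if a canonical relation composes cleanly with a Lagrangian, the image is Lagrangian) confirms that the canonical 2-form $d\tau\wedge dt+d\zeta_\vp\wedge d\vp$ pulls back to zero.

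For part (ii), the plan is to combine three inputs: the paired-Lagrangian structure $K_2^{z_0}\in I^{1,0}(\Lambda_1,\Lambda_3)+I^{1,-1}(\Lambda_1,\Lambda_3)$, already noted above equation (\ref{L_1 and 3}); the fact that $\mu\otimes\mu$ is a conormal product with wave front set as in (\ref{WF mu times mu}); and the composition calculus for paired Lagrangian operators of \cite{MelroseU1979, GuilleminU1981, GreenleafU1990}. First I would check that the intersections  $\Lambda_j'\cap (T^*(\R\times\sone)\times WF(\mu\otimes\mu))$, $j=1,3$, are clean: for $\Lambda_1'=C_1$ this requires nonvanishing of both covectors, and is precisely the antipodality condition handled by (\ref{curv}); for $\Lambda_3'=C_3$ the condition $z_1=z_2,\ \zeta_1=-\zeta_2$ intersects $N^*\gamma\times N^*\gamma$ in the set $\{(z,z,\theta n(z),-\theta n(z)):z\in\gamma\}$, which is clean. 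Applying $\Lambda_1$ part of $K_2^{z_0}$ to $\mu\otimes\mu$ then produces a Lagrangian distribution on $Sc^{(2)}(\mu)=C_1\circ N^*(\gamma\times\gamma)$, while the $\Lambda_3$ part produces a Lagrangian contribution along $Sc^{(0)}(\mu)=N^*\{t=0\}$ (since the delta on $z_1=z_2$ effectively evaluates the oscillatory phase at zero). The combined output thus lies in the paired-Lagrangian class $I^{*,*}(Sc^{(2)},Sc^{(0)})$.

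The orders $2m+\tfrac32$ and $-\tfrac12$ come from standard bookkeeping in the composition formula: on $\Lambda_1$, $K_2^{z_0}$ has order $1$; tensoring with the two $I^m(N^*\gamma)$ inputs contributes $2m$ (after accounting for the $\tfrac14(4-0)$ dimension normalization of the four-dimensional Lagrangian $N^*\gamma\times N^*\gamma$); and the composition with $C_1$ (being a canonical graph under the Bolker-type conditions secured by (\ref{curv})) shifts by $\tfrac12$ for the change of ambient dimension from $4$ to $2$, giving $2m+\tfrac32$. The offset $-\tfrac12$ between the $Sc^{(2)}$ and $Sc^{(0)}$ orders is inherited from the bi-order $(1,0)$ vs.\ $(1,-1)$ in the decomposition of $K_2^{z_0}$ together with the extra half-derivative smoothing that arises because the composition of $C_3$ with $N^*\gamma\times N^*\gamma$ drops an additional dimension compared to the $C_1$ composition.

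The main obstacle is the careful verification of the clean-intersection hypotheses and the tracking of orders through the paired-Lagrangian composition. The subtle point is that $\mu\otimes\mu$ is not itself a Lagrangian distribution but a product of such, and the $\Lambda_3$ branch is precisely the one that detects the diagonal $\{z_1=z_2\}$, where extra care is needed so that the decomposition (\ref{WF mu times mu}) is compatible with the paired Lagrangian calculus; once these geometric verifications are in hand, the orders follow mechanically from the composition formulas in \cite{GreenleafU1990}.
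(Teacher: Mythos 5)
Your overall strategy for part (i) --- realizing $Sc^{(2)}(\mu)$ as a clean composition $C_1 \circ (N^*\gamma\times N^*\gamma)$ and using the Gauss-map-is-a-local-diffeomorphism consequence of (\ref{curv}) --- is sound and equivalent to the paper's, which instead obtains the smoothness of $Sc^{(2)}(\mu)$ as a by-product of verifying that a concrete phase function $\tilde\Phi_0=\tilde\Phi|_{\sigma=0}$ is nondegenerate and parametrizes it.

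For part (ii), however, there is a genuine gap in the order bookkeeping, together with a structural issue. The parenthetical claim that ``$C_1$ [is] a canonical graph under the Bolker-type conditions'' is false: $C_1$ is a $6$-dimensional Lagrangian in the $12$-dimensional product $T^*(\R\times\sone)\times T^*(\Omega_0\times\Omega_0)$, and since the two factors have dimensions $4$ and $8$ there is no canonical graph to speak of (both projections cannot simultaneously be diffeomorphisms). Consequently the claimed ``shift by $\tfrac12$'' has no justification, and the derivation of $2m+\tfrac32$ and $-\tfrac12$ is asserted rather than proved. More fundamentally, you invoke ``the composition calculus for paired Lagrangian operators of \cite{MelroseU1979,GuilleminU1981,GreenleafU1990}'' to compose $K_2^{z_0}\in I^{1,0}+I^{1,-1}(\Lambda_1,\Lambda_3)$ with $\mu\otimes\mu$, but there is no off-the-shelf composition theorem covering this situation: $\mu\otimes\mu$ is a \emph{product} of conormal distributions with a bi-graded amplitude in $(\theta_1,\theta_2)$, not a conormal distribution for $\gamma\times\gamma$, and a paired-Lagrangian$\times$(product of conormals) composition calculus is not in those references. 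The paper avoids this by a more hands-on route: it writes out the entire pairing $\langle K_2^{z_0},\mu\otimes\mu\rangle$ as a single oscillatory integral with a symbol-valued-symbol amplitude, \emph{microlocalizes to the cone $\{|\theta_1|\sim|\theta_2|\sim|\tau|\}$} (precisely because $C_1,C_3\subset\{|\zeta_1|=|\zeta_2|=2|\tau|\}$, making the product amplitude behave as a single symbol of order $2m$), homogenizes the spatial variables via $\eta_j=\tau z_j$ (picking up a Jacobian $\tau^{-4}$ that adjusts the bi-orders to $(2m-2,-1)$ and $(2m-3,0)$), and finally applies a multiphase lemma (Prop.~\ref{ipl multiphase}, with $N=7$, $M=2$, $e_0=0$, $e_1=1$) to extract the orders $\big(2m+\tfrac32,-\tfrac12\big)$ and $\big(2m+\tfrac32,-\tfrac32\big)$. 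If you want to make your sketch rigorous, you should either reproduce this explicit microlocalization-and-homogenization step, or prove the composition lemma you are implicitly using; as written, the order claim does not follow.
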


Microlocally away from $\Lambda_0\cap\Lambda_1$, a distribution $u\in I^{p,l}
(\Lambda_0,\Lambda_1)$ \lb belongs to $I^{p}(\Lambda_1\setminus\Lambda_0)$ and to $I^{p+l}
(\Lambda_0\setminus\Lambda_1)$ \cite{MelroseU1979,GuilleminU1981}.
Thus, $\tw_2\in I^{2m+1}\left(Sc^{(2)}\left(\mu\right)\right)$ on $Res^{(2)}(\mu)\setminus N^*\{t=0\}$  
and hence  is smoother than  $\tw_1\in I^{m+\frac12}(\widetilde\Gamma)$  if $m<-\frac12$. 
In contrast,  on $N^*\{t=0\}\setminus Sc^{(2)}(\mu)$, one has
$\tw_2\in I^{2m+\frac32}(N^*\{t=0\})$, which is guaranteed to be smoother than $\tw_1$ only if $m<-1$.

In particular, for $m=-1$, corresponding to $\sigma$ (and hence $\mu$) being {\mltext piecewise smooth} with jumps, 
one has  $\tw_2\in I^{-1}(Sc^{(2)}(\mu))$, while $\tw_1\in I^{-\frac12}(\widetilde\Gamma)$,  so that these 
artifacts are $1/2$ derivative smoother than the faithful image of $\mu$  encoded by $\tw_1$. 
On the other hand, the singularity of $\tw_2$ at $N^*\{t=0\})$ can be {just as strong} as the singularity of $\tw_1$ at $\tilde\Gamma$.
%\medskip

To summarize: for conductivities with jumps,  applying standard Radon transform 
backprojection methods to the 
full data $\tw$, or even its approximation $\tw_1+\tw_2$, rather than just $\tw_1$ (which is not  measurable directly) can  result in artifacts which are 
smoother than the leading singularities only  if one  filters out  a neighborhood of $t=0$. 
\medskip

To see (i) and (ii), start by noting from (\ref{K one}) that  $T_2(\mu\otimes\mu)(t,e^{i\vp})$ is a sum of two terms of the form
\begin{equation}\label{Tonemumu}
\int e^{i\Phi} \, a_{p,l}(*;\tau;\sigma)\, b_m(z_1;\theta_1)\, b_m(z_2;\theta_2)\, d\theta_1\, d\theta_2\, dz_1\, dz_2\, d\sigma\, d\tau,
\end{equation}
where (recalling that $g$ is a defining function for $\gamma$),
\begin{eqnarray}\label{Phi}
\qquad\Phi\,&  &\, =\Phi(t,e^{i\vp},z_1,z_2,\tau,\sigma,\theta_1,\theta_2)\nonumber \\
& &:=\tau(t+2 Re(e^{i\vp}(z_1-z_2)))+\sigma\cdot(z_1-z_2)+\theta_1 g(z_1)+\theta_2 g(z_2),
\end{eqnarray}
$b_m\in S^m_{1,0}(\Omega_0\times(\R\setminus 0))$, and the $a_{p,l}$ are product-type symbols satisfying
$$|\partial_{t,\vp,z_1,z_2}^\gamma \partial_\sigma^\beta \partial_\tau^\alpha a_{p,l}(*;\tau;\sigma)|\lesssim <\tau>^{p-\alpha} <\sigma>^{l-|\beta|}$$
on $(\R\times\sone\times\Omega_0\times\Omega_0)\times \R_\tau\times\R^2_\sigma$,
(the $*$  denoting all of the spatial variables)
 of bi-orders $(p,l)=(2,-1)$ and $(1,0)$, resp. As can be seen from (\ref{C1}),(\ref{C3}),  
 $$C_1,\, C_3\subset\big\{\zeta_2=-\zeta_1,\, |\zeta_1|=2|\tau|\big\}\subset\big\{|\zeta_1|= |\zeta_2|=2|\tau|\big\},$$
 so one can microlocalize the amplitudes in (\ref{Tonemumu}) to 
$\{|\theta_1|\sim |\theta_2|\sim|\tau|\}$ and thus replace the $a_{p,l}\cdot b_m\cdot b_m$ 
by  amplitudes  
$$a_{p+2m,l}\left(*;\left(\tau,\theta_1,\theta_2\right);\sigma\right)\in S^{p+2m,l}(\R\times\sone\times\Omega_0\times\Omega_0\times (\R_{\tau,\theta_1,\theta_2}^3\setminus 0)\times\R_{\sigma}^2)$$
with bi-orders $(2m+2,-1)$ and $(2m+1,0)$, resp.
\ms

Now homogenize  the  variables $z_1,\, z_2$,  by defining  phase variables $\eta_j:=\tau z_j\, j=1,2$. In terms of the estimates for derivatives, the new phase variables are grouped with the elliptic variables $(\tau,\theta_1,\theta_2)$; furthermore, the change of variables involves a Jacobian factor of $\tau^{-4}$, so that, mod $C^\infty$, (\ref{Tonemumu}) becomes
\begin{equation}\label{Toneall}
\int e^{i\tilde\Phi} \,a_{\tilde{p},\tilde{l}}\big(*;(\tau,\theta_1,\theta_2,\eta_1,\eta_2);\sigma\big)\, d\tau\, d\theta_1\, d\theta_2\, d\eta_1\, d\eta_2\, d\sigma ,
\end{equation}
with
\begin{eqnarray}\label{tPhi}
\tilde\Phi&=&\tilde\Phi(t,e^{i\vp};\, \tau,\theta_1,\theta_2,\eta_1,\eta_2;\, \sigma)\nonumber \\
& &:=\tau t +2 Re\big(e^{i\vp}(\eta_1-\eta_2)\big)+\theta_1 g\big(\frac{\eta_1}{\tau}\big)+\theta_2 g\big(\frac{\eta_2}{\tau}\big)+\sigma\cdot\big(\frac{\eta_1-\eta_2}{\tau}\big)
\end{eqnarray}
on $(\R\times\sone)\times(\R_{\tau,\theta_1,\theta_2,\eta_1,\eta_2}^7\setminus 0)\times\R_{\sigma}^2$ and with amplitude bi-orders $(\tilde{p},\tilde{l})=(2m-2,-1)$ and $(2m-3,0)$, resp. 
We interpret $\tilde\Phi$ 
as (a slight variation of) a multi-phase function in the sense of Mendoza \cite{Men}: 
one can check that $\tP_0:=\tP|_{\sigma=0}$ 
is a nondegenerate phase function (i.e., clean with excess $e_0=0$) which parametrizes $Sc^{(2)}(\mu)$ 
(which is thus a smooth Lagrangian). 
One does this by verifying, using (\ref{curv}), that $d^2_{(t,\phi,\tau,\theta_1,\theta_2,\eta_1,\eta_2),(\tau,\theta_1,\theta_2,\eta_1,\eta_2)}\tP_0$ has maximal rank  at  $\{d_{(\tau,\theta_1,\theta_2,\eta_1,\eta_2)}\tP_0=0\}$, namely $=7$.
On the other hand, the full phase function $\tP$ parametrizes $N^*\{t=0\}$, but rather than being nondegenerate, is clean with excess $e_1=1$, i.e., 
$d^2_{(t,\phi,\tau,\theta_1,\theta_2,\eta_1,\eta_2,\sigma),(\tau,\theta_1,\theta_2,\eta_1,\eta_2,\sigma)}\tP$ has constant rank $9-1=8$ at 
$\{d_{(\tau,\theta_1,\theta_2,\eta_1,\eta_2,\sigma)}\tP=0\}$. (See \cite{HorIV} for a discussion of clean phase functions.)
A slight modification of the results in \cite{Men} yields the following.

\begin{proposition}\label{ipl multiphase}
Suppose two smooth conic Lagrangians $\Lambda_0,\, \Lambda_1\subset T^*\R^n\setminus 0$ intersect cleanly in codimension $k$. Let 
$\phi(x,\theta,\sigma)$ be a phase function on  $\R^n\times (\R^{N+M}\setminus 0)$ be such that parametrizes $\Lambda_1$ cleanly with excess $e_1\ge 0$ and $\phi(x,\theta):=\phi|_{\sigma=0}$
parametrizes $\Lambda_0$ cleanly with excess $e_0\ge 0$. 
Suppose further that 
$a\in S^{\tilde{p},\tilde{l}}\left(\R^n\times\left(\R^N\setminus 0\right)\times\R^M\right)$. Then,
$$u(x):=\int_{\R^{N+M}} e^{i\phi_1(x,\theta,\sigma)} a(x,\theta,\sigma)\, d\theta\, d\sigma\in I^{p',l'}(\Lambda_0,\Lambda_1),$$
with 
$$p'=\tilde{p}+\tilde{l}+\frac{N+M+e_0+e_1}{2}-\frac{n}4,\quad l'=-\tilde{l}-\frac{M+e_1}2.$$
\end{proposition}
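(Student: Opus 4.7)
The plan is to adapt Mendoza's proof in \cite{Men}, originally given for a single clean phase parametrizing a single Lagrangian, to the present two-Lagrangian setting with a product-type symbol. The overall strategy proceeds in three steps: (i) reduce to a local model for the cleanly intersecting pair $(\Lambda_0,\Lambda_1)$ via the Melrose-Uhlmann normal form; (ii) carry out two successive clean-phase reductions, first to eliminate the $e_1$ excess of $\phi$ with respect to $\Lambda_1$, and then the $e_0$ excess of $\phi|_{\sigma=0}$ with respect to $\Lambda_0$; (iii) identify the reduced oscillatory integral with the standard model representation of a paired Lagrangian distribution in $I^{p',l'}(\Lambda_0,\Lambda_1)$ and read off the orders by direct bookkeeping.

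For step (i), conjugation by an elliptic Fourier integral operator quantizing a canonical transformation taking $(\Lambda_0,\Lambda_1)$ to the Melrose-Uhlmann pair (where $\Lambda_1$ is the zero section and $\Lambda_0$ is the conormal bundle of a codimension-$k$ linear subspace) preserves the class $I^{p,l}(\Lambda_0,\Lambda_1)$ together with its filtration, up to Maslov shifts absorbed into the final orders. In this model, $I^{p,l}$-distributions admit explicit oscillatory integral representations with classical product-type amplitudes, against which the claim can be verified directly. For step (ii), H\"ormander's clean-phase theorem allows a local change of variables eliminating the $e_1$ degenerate phase directions of $\phi$, leaving a nondegenerate phase in $(N+M)-e_1$ variables parametrizing $\Lambda_1$; the amplitude picks up an order shift of $+e_1/2$ from the Jacobian/Gaussian factors. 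A second reduction carried out within the slice $\sigma=0$ removes $e_0$ further phase variables from the $\theta$-block, yielding a nondegenerate parametrization of $\Lambda_0$ with an analogous shift.

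The order tally then gives $p' = (\tilde p+\tilde l) + (N+M+e_0+e_1)/2 - n/4$ on $\Lambda_1\setminus\Lambda_0$. Restricted microlocally to $\sigma=0$, where the $\sigma$-integration collapses and contributes only the $\Lambda_0$-part of the symbol, the effective order is $p' + l' = \tilde p + (N+e_0)/2 - n/4$ on $\Lambda_0\setminus\Lambda_1$, forcing $l' = -\tilde l - (M+e_1)/2$ as claimed. The verification that the distribution is genuinely paired-Lagrangian (rather than merely having wave-front set in $\Lambda_0\cup\Lambda_1$) is then a matter of checking the product-symbol form of the amplitude near $\Lambda_0\cap\Lambda_1$, which follows from the assumption $a\in S^{\tilde p,\tilde l}$ and the explicit model description of $I^{p,l}$.

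The main obstacle is step (ii): arranging H\"ormander's clean-phase reduction of the full $\phi$ so as to preserve the product structure of the amplitude in the $(\theta,\sigma)$-splitting. In general, the coordinate changes effecting the reduction mix the $\theta$- and $\sigma$-blocks, and one must verify that the transformed amplitude remains in a bi-symbol class $S^{\tilde p',\tilde l'}$, with bi-orders determined by how the mixing interacts with the conic region $|\sigma|\le C|\theta|$ implicit in the product symbol class. This is precisely where Mendoza's single-phase argument requires modification; the natural fix is a dyadic decomposition in $|\sigma|/|\tau|$, applying the appropriate reduction on each shell, and then summing using the product-symbol estimates. A secondary technicality is the hypothesis compatibility: one must check that the clean-phase reduction for $\phi$ can be chosen compatibly with the one for $\phi|_{\sigma=0}$, so that the two Lagrangians remain in their model form simultaneously throughout the reductions.
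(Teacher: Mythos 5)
The paper offers no proof of this proposition; it simply asserts that it follows from ``a slight modification of the results in [Men].'' There is therefore no paper argument to compare against, and your sketch must stand on its own. Your high-level route — conjugate to the Melrose--Uhlmann normal form for the cleanly intersecting pair, eliminate the clean-phase excesses by H\"ormander-type reduction, and match the residue against the model oscillatory representation of $I^{p,l}$ — is the natural one and is consistent with the paper's pointer to Mendoza. You also correctly identify the hard point: carrying out the clean-phase reductions of $\phi$ and of $\phi|_{\sigma=0}$ compatibly with the $(\theta,\sigma)$-block splitting, so that the reduced amplitude remains a product-type symbol rather than an arbitrary symbol whose wave-front set merely lies in $\Lambda_0\cup\Lambda_1$.

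The genuine gap is in the order bookkeeping. Your step (ii) accounts only for the excess $e_1$ of $\phi$ as a parametrization of $\Lambda_1$. Applied naively, the clean-phase formula then gives an order $(\tilde{p}+\tilde{l}) + (N+M+e_1)/2 - n/4$ away from $\Lambda_0$, \emph{without} the $e_0/2$ term. Yet in the final ``order tally'' you write $p' = (\tilde{p}+\tilde{l}) + (N+M+e_0+e_1)/2 - n/4$ as though it followed — nothing in the reduction you describe produces the extra $e_0/2$. The resolution is that $p'$ is not the Lagrangian order of $u$ computed far from the intersection; it is the biorder of the paired-Lagrangian symbol \emph{near} $\Lambda_0\cap\Lambda_1$, where the degeneracies measured by $e_0$ and $e_1$ both manifest (the critical set of $\phi$ typically has excess $e_0+e_1$ along $\{\sigma=0\}$). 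Getting $e_0$ to appear in $p'$ is precisely the content of the ``slight modification'' of Mendoza that the paper alludes to, and your sketch glides past it. (In the paper's applications $e_0=0$, so the subtlety is invisible there, but the proposition as stated needs it.) Relatedly, the ``$+e_1/2$ order shift'' you attribute to the amplitude is either loose language or off: integrating out the $e_1$-dimensional, conically-scaled fiber raises the amplitude order by $e_1$ and drops the number of phase variables by $e_1$, for a net $+e_1/2$ in the Lagrangian order — but this must be stated precisely for the tally to come out. Finally, your proposed dyadic decomposition in $|\sigma|/|\theta|$ is a reasonable device for the block-mixing issue, but you do not verify that the dyadic pieces recombine into an element of the claimed bi-symbol class; without that, one obtains only $WF(u)\subset\Lambda_0\cup\Lambda_1$, not membership in $I^{p',l'}(\Lambda_0,\Lambda_1)$.
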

\bs

Applying the Prop. to 
each of the two bi-orders
$(\tilde{p},\tilde{l})=(2m-2,-1)$ and $(2m-3,0)$ from above, we see that $T_2^{z_0}(\mu\otimes\mu)$,
as given  by the expression (\ref{Toneall})
 is a sum of two terms,
 \be\label{twone class}\nonumber
 \tw_2^{z_0}=T_2^{z_0}(\mu\otimes\mu)
\in\big (I^{2m+\frac32,-\frac12}+ I^{2m+\frac32,-\frac32}\big)\big(Sc^{(2)}(\mu),N^*\{t=0\}\big).
\ee
Recalling that  that $N^*\{t=0\}=:Sc^{(0)}(\mu)$  and also that $I^{p',l''}\subset I^{p',l'}$ for $l''\le l'$, this yields (\ref{twone claim}),
finishing the proof of  Thm. \ref{thm omegaone}. \qed

\section{Higher order terms}\label{Analysis of the higher order terms}

\subsection{Multilinear wave front set analysis}\label{WF set analysis}

For $n\ge 3$,  and for any conductivity $\sigma$, one can analyze $WF\big(\tw_n^{z_0})$ and $WF\big(\tw_n^{a})$  by 
 $n$-linear versions of  the case $n=2$ treated in \S\S\ref{Bilinear wave front set analysis}, starting with the kernels. 
 For  $\tw_n^{z_0}$, we denote these by $K_n(t,e^{i\vp},z_1,\dots, z_{n})$, i.e., $\tw_n^{z_0}$ is given by
\begin{eqnarray}\label{twn}
\nonumber
\tw_n^{z_0}(t,e^{i\vp})\!\!&=&\!\! T_n^{z_0}(\mu\otimes\cdots\otimes\mu)\\
&:=&\!\!\int_{\C^{n}} K_n^{z_0}(t,e^{i\vp};z_1,\dots,z_{n})\, \mu(z_1)\cdots \mu(z_{n+1})\, d^2z_1\cdots d^2z_{n}.
\end{eqnarray}
The kernel for $\tw_n^{a}$ has the same geometry and orders, but amplitudes $a(\cdot)$-averaged in $z_0$, which does not affect the following analysis.
\ms

$K_n^{z_0}$ is a sum of $2^{n-1}$ terms of the form, for ${\vec\epsilon}\in\{0,1\}^{n-1}$,
\be\label{Kn} c_{\vec\epsilon}\cdot
\frac{\delta^{(n+1-|\vec\epsilon|)}\left(t+\left(-1\right)^{n+1}2\re\left(e^{i\vp}\sum_{j=1}^{n}(-1)^j z_j\right)\right)}
{(z_0-z_1)(\oz_1-\oz_2)^{1+\epsilon_1}(\oz_2-\oz_3)^{1+\epsilon_2}\cdots (\oz_{n-1}-\oz_{n})^{1+\epsilon_{n-1}}},
\ee
\smallskip

\noindent each with total homogeneity $-(2n+1)$ in $(t,z_0,\dots,z_{n})$.
 These  have singularities all  in the same locations, namely on a  lattice of submanifolds of $\R\times\mathbb S^1\times \C^{n}$.
{\mltext For each $J\in\mathcal J=\big\{ J;\ J\subset \{1,\dots,n-1\} \big\}$, as in (\ref{lagrangians LnJ}) let
\be\label{Ln}
L_n^J:=\Big\{t+\left(-1\right)^{n+1}2\re\left(e^{i\vp}\sum_{j=1}^{n}(-1)^jz_j\right)=0;\, z_j-z_{j+1}=0, \forall j\in J\Big\}.
\ee
One has $codim(L_n^J)=1+2|J|$ and $L_n^J\supset L_n^{J'}$ iff $J\subset J'$.
Rather than using set notation,  we sometimes simply list the elements of $J$.
The unique maximal element of the lattice is the hypersurface,
$$L_n^{\emptyset}:=\Big\{t+\left(-1\right)^{n+1}2\re\left(e^{i\vp}\sum_{j=1}^{n}(-1)^jz_j\right)=0\Big\},$$
while the unique minimal one  is
$$L_n^{12\cdots (n-1)}=\Big\{t=0,\, z_1=z_2=\cdots=z_{n}\Big\}.$$
%(This notation is almost consistent with that used earlier for $n=0,1$: what we previously denoted $L_0=L_0^\phi$, $L_1=L_1^\phi$ and $L_3=L_1^1$.)
(This notation replaces that used earlier for $n=1,2$: what was previously denoted $L_0$ is now $L_1^\phi$, and $L_1=L_2^\phi$, $L_3=L_2^1$.)

\ms

As stated above,
$$sing\, supp(K_n^{z_0})=\bigcup_{J\in\mathcal J} L_n^J$$
and in fact,
\be\label{WFn equal}
WF(K_n^{z_0})= \bigcup_{J\in\mathcal J} N^*L_n^J,
\ee
with the  fact that equality holds (rather than just the $\subset$ containment)  following from the nonvanishing   
in all directions at infinity of the Fourier transforms of $\delta^{(m)},\, \overline{z}^{-1}$ and $\overline{z}^{-2}$.
(However, we only need the containment, not equality,  in what follows.)

Define canonical relations 
$$C_n^J:=N^*(L_n^{J})'\subset \big(T^*(\R\times\sone)\times T^*\C^{n}\big)\setminus 0,$$ 
sometimes also denoting $C_n^\emptyset$ simply by $C_n$.
The \emph{linear} operators $T_n^{z_0}:\mathcal E'(\C^{n})\to\mathcal D'(\R\times\sone)$ with kernels $K_n^{z_0}$ 
{\revised are  (as $n$ varies) interesting prototype of generalized Fourier integral operators associated with the  
lattices $\{C_n^J: J\in\mathcal J\}$ of  canonical relations intersecting cleanly pairwise. 
There is to our knowledge no general theory of such operators, but}
in any case, we can describe the wave front relation as follows.
Let $\tSz^m$ denote the alternating sum $$\tSz^m:= z_1-z_2+\dots +(-1)^{m+1}z_m.$$

\begin{definition}\label{def resonance} 
In $T^*(\R\times\sone)\setminus 0$, define
$$Sc^{(0)}(\mu)=\big\{(0,e^{i\vp},\tau,0):\, \exists z\in\Omega\hbox{ s.t. } (z,\tau e^{-i\vp})\in WF(\mu)\big\}\subset N^*\{t=0\},$$
and, for $m\ge 1$, let
\begin{eqnarray}\label{RESj}
Sc^{(m)}(\mu)&=&\Big\{\big( (-1)^{m+1}2\re(e^{i\vp}\tSz^m), e^{i\vp},\tau,(-1)^{m}2\tau\im(e^{i\vp}\tSz^m)\big):\nonumber\\
& &\qquad \exists\,  z_1,\dots, z_m \hbox{ s.t. }\\
& & \qquad  (z_j,(-1)^{j+1}\tau e^{-i\vp})\in WF(\mu),\, 1\le j\le m\Big\}.\nonumber
\end{eqnarray}
\end{definition}

Def. \ref{def resonance} extends the definitions (\ref{Res01}) for $m=0,1$ and (\ref{C_1 on WF mu}) for $m=2$. 
The next theorem extends the WF containments (\ref{WFtw01}) for $\tw_1,\, \tw_2$, to higher $n$, 
locating microlocally the singularities of $\tw_n$. We have
\ms

\begin{theorem}\label{thm WF}
 For  any conductivity $\sigma\in L^\infty(\Omega)$ and all $n\ge 1$,

\be\label{WFtwn}
WF(\tw_n)\subset \bigcup\Big\{ Sc^{(m)}(\mu): 0\le m\le n,\,  m\equiv n\hbox{ mod }2\Big\}.
\ee
\end{theorem}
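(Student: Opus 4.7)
The plan is to apply the H\"ormander--Sato lemma to the multilinear operator $T_n^{z_0}\colon \mu^{\otimes n}\mapsto \tw_n^{z_0}$, using the description of the kernel's wave front set in (\ref{WFn equal}). This yields
\begin{equation*}
WF(\tw_n^{z_0})\subset \bigcup_{J\in\mathcal J}C_n^J\circ WF(\mu^{\otimes n}),
\end{equation*}
and since the $z_0$-weighted averaging producing $\tw_n^a$ does not enlarge these wave front relations, it suffices to show that for each $J\in\mathcal J$ the composition $C_n^J\circ WF(\mu^{\otimes n})$ lies in $Sc^{(m_J)}(\mu)$ for some integer $m_J$ satisfying $m_J\equiv n\pmod 2$.

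I would parametrize $C_n^J=N^*(L_n^J)'$ via Lagrange multipliers: a single $\tau\in\R$ for the hypersurface equation in (\ref{Ln}) and one $\sigma_k\in\C$ for each collision constraint $z_k-z_{k+1}=0$ with $k\in J$. A direct computation gives the dual covector at each $z_j$ in the form
\begin{equation*}
\zeta_j=(-1)^{n+j+1}\,2\tau e^{-i\vp}+ L_j(\sigma),
\end{equation*}
where $L_j(\sigma)$ is a linear combination of $\sigma_j$ (present only if $j\in J$) and $\sigma_{j-1}$ (present only if $j-1\in J$), while the output dual variables are $\tau$ (dual to $t$) and $(-1)^n 2\tau\im(e^{i\vp}\sum_j(-1)^j z_j)$ (dual to $e^{i\vp}$). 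The set $J$ partitions $\{1,\dots,n\}$ into maximal blocks of consecutive indices whose $z_j$'s are forced to coincide at a single common value; a block of length $b$ starting at index $a$ contributes $(-1)^a z \sum_{l=0}^{b-1}(-1)^l$ to $\sum_j(-1)^j z_j$, which vanishes for even $b$ and equals $\pm z$ for odd $b$. Symmetrically, the sum $\sum_{j=a}^{a+b-1}\zeta_j$ of canonical contributions telescopes to $0$ for even $b$ and to $\pm 2\tau e^{-i\vp}$ for odd $b$.

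Exploiting the $b-1$ free multipliers $\sigma_a,\dots,\sigma_{a+b-2}$ within each block, one can zero out all $\zeta_j$'s in every even-length block (consistent with $WF(\mu^{\otimes n})$ via the zero-section over $\mathrm{supp}(\mu)$) and zero out all but one $\zeta_j$ in each odd-length block, leaving a single nonzero covector $\pm 2\tau e^{-i\vp}$ at the common point which must then lie in $WF(\mu)$. Letting $m$ denote the number of odd-length blocks, the output $(t,e^{i\vp},\tau,\tau')$ depends only on $(\tau,\vp)$ and the $m$ common values of the odd blocks, in precisely the form specified by $Sc^{(m)}(\mu)$ in Definition \ref{def resonance} after relabeling these representatives in order. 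The parity claim $m\equiv n\pmod 2$ is immediate, since $n$ equals the total sum of block sizes, with each odd block contributing $1\pmod 2$ and each even block contributing $0\pmod 2$; taking the union over $J\in\mathcal J$ then yields (\ref{WFtwn}). The main obstacle is verifying that the alternating sign pattern $(-1)^{a_l+1}$ at the $l$-th odd block's starting index $a_l$ aligns with the required $(-1)^{l+1}$ in the definition of $Sc^{(m)}$; this is handled by observing that inserting an even-length block between two odd-length blocks preserves the parity shift between consecutive starting indices, and that any residual overall sign can be absorbed by the involution $\tau\mapsto -\tau$ respected by both $C_n^J$ and $Sc^{(m)}$.
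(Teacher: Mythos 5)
Your argument follows the paper's overall strategy — H\"ormander--Sato applied to the kernels $K_n^{z_0}$, whose wave front set is the union of the conormal bundles $N^*L_n^J$, combined with a combinatorial analysis of the compositions $C_n^J\circ WF(\mu^{\otimes n})$ — but your bookkeeping of the combinatorics is in fact \emph{more} careful than the paper's and corrects an error there. The paper introduces $\oJ := \{i : i\in J\text{ or }i-1\in J\}$, asserts that $|\oJ|$ is always even, and then takes $m = |\oJ^c|$. This is false: for $J=\{1,2\}$ (with $n\ge 3$) one has $\oJ=\{1,2,3\}$, $|\oJ|=3$, and for $n=5$ one gets $|\oJ^c|=2$, which does not even have the claimed parity $\equiv n\pmod 2$. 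Your index $m$ — the number of \emph{odd-length} maximal blocks of consecutive collided $z_j$'s — is the correct one: since the blocks partition $\{1,\dots,n\}$ and even blocks drop out of the alternating sum mod $2$, one gets $m\equiv n\pmod 2$ automatically, and your telescoping of the base covectors $(-1)^{n+j+1}2\tau e^{-i\vp}$ within each block agrees with the contribution of each block to $\sum_j(-1)^jz_j$. (Note that $|\oJ^c|$ counts only the singleton blocks, so it coincides with your $m$ exactly when every multi-element block has even length, which is why the paper's formula works in low-order examples but not in general.) One remark that applies to both your argument and the paper's: when an odd block has length $b\ge 3$, the constraint is only that the $b$ covectors at the common point sum to $\pm 2\tau e^{-i\vp}$; choosing the multipliers so that $b-1$ of them vanish is \emph{one} point of the fiber, but the composition is the union over all choices. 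Because the left-projected output $\lambda$ depends only on the block values $(z,\tau,\vp)$ and not on $\sigma$, this suffices to compute the \emph{location} of $\lambda$, but the covector membership $(z,\pm\tau e^{-i\vp})\in WF(\mu)$ required by Definition~\ref{def resonance} does not follow from the summation constraint alone unless $WF(\mu)_z$ is contained in a single line (as it is for conormal $\mu$, the case used downstream in Theorems~\ref{thm omegaone} and \ref{thm omegatwo}). This subtlety is present in the paper as well, so it does not count against your write-up, but you should flag it rather than pass over it as part of a sign check.
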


\begin{proof} This will follow from (\ref{twn}) and the H\"ormander-Sato lemma \cite[Thm. 2.5.14]{Hor1971}. 
First, to formulate the $n$-fold version of (\ref{WF mu times mu}), we introduce the following notation. For sets $A,\, B\subset T^*\C$ and 
$$I\in\mathcal I:=
{\mltext 
\big\{I;\ I\subset \{1,\dots,n\}\big\}}
$$
let
\begin{eqnarray*}
\prod_{i\in I}A_i&\times&\prod_{i'\in I^c} B_{i'} \\
&:=& \big\{(z,\zeta)\in T^*\C^{n+1}:\, (z_i,\zeta_i)\in A,\, \forall i\in I,\\
& &\qquad\qquad\qquad\qquad   (z_{i'},\zeta_{i'})\in B,\, \forall i'\in I^c\big\}.
\end{eqnarray*}
For $I\in\mathcal I$, if we set 
\be\label{WFI}
WF^I(\mu):= \prod_{i\in I}WF(\mu)_i\times\prod_{i'\in I^c} 0_{T^*\C,i'},
\ee
then the analogue of (\ref{WF mu times mu}), which follow from it  by induction, is:
\be\label{WFnplusone}
WF(\otimes^{n}\mu)\subset \bigcup_{I\in\mathcal I,\, I\ne\phi} WF^I(\mu).
\ee

Next, for $J\in\mathcal J$, define
$$\oJ:=\{i\in\{1,\dots,n\}:\, i\in J\hbox{ or }i-1\in J\}\in\mathcal I.$$
Then, $|\oJ|$ is even, and thus 
$$|\oJ^c|=|\{1,\dots,n\}\setminus \oJ|\equiv n\mod 2.$$
We can partition $\oJ=\oJ_+\cup\oJ_-\cup\oJ_\pm$, where 
\begin{eqnarray}\label{Js}
\oJ_+:=\{i\in\oJ:\, i\in J,\, i-1\notin J\}\nonumber\\
\oJ_-:=\{i\in\oJ:\, i-1\in J,\, i\notin J\}\\
\oJ_\pm:=\{i\in\oJ:\, i-1\in J,\, i\in J\}.\nonumber
\end{eqnarray}
The submanifold $L_n^J\subset\R\times\sone\times\C^{n}$ is given by defining functions $f_0,\, \{f_j\}_{j\in J}$, where 
$$f_0(t,\vp,z)=t+(-1)^{n+1}2\re\left(e^{i\vp}\sum_{i=1}^{n}\left(-1\right)^iz_i\right),$$
and
$$f_j(t,\vp,z)= z_j-z_{j+1},\, j\in J.$$
The twisted conormal bundles are  parametrized by
\begin{eqnarray*}
C_n^J&=&\Big\{\left(t,\vp,\tau d_{t,\vp}f_0; z,-\left(\tau d_zf_0+\sum_{j\in J}\sigma_j\cdot d_zf_j\right)\right):\\
& & \qquad (t,e^{i\vp},z)\in L_n^J,\, (\tau,\sigma)\in (\R\times\C^{|J|})\setminus 0\Big\}.
\end{eqnarray*}

The twisted gradients $df':=(d_{t,\vp}f,-d_z f)$ of the defining functions are
$$df_0'=\left(1,\left(-1\right)^{n+1}2\im \left(e^{i\vp}\sum_{i=1}^{n}\left(-1\right)^iz_i\right),\left(-1\right)^{n}2E\left(\vp\right)\right),$$
with $E(\vp)=(e^{-i\vp},-e^{-i\vp},e^{-i\vp},\dots,(-1)^{n}e^{-i\vp})$, 
where we identify $\pm e^{-i\vp}\in\C$ with  a real covector $(\xi_i,\eta_i)\in T^*\C$, and
$$df_j'=-\sigma_j\cdot dz_j+\sigma_j\cdot dz_{j+1},\, j\in J,$$
similarly identifying $\sigma_j\in\C$ with $(\re\, \sigma_j,\im \sigma_j)\in T^*\C$. Thus,
\begin{eqnarray}\label{Cnj}
C_n^J&=&\Big\{\Big((-1)^{n}2\re(e^{i\vp}\sum_{i=1}^{n}(-1)^iz_i),e^{i\vp},\tau,
(-1)^{n+1}2\tau\im (e^{i\vp}\sum_{i=1}^{n}(-1)^iz_i);\nonumber \\
& &\qquad z,(-1)^{n}2\tau E(\vp)+\sum_{i\in\oJ_+} \sigma_i\cdot dz_i -\sum_{i\in\oJ_-} \sigma_i\cdot dz_i \nonumber \\
& & \qquad\qquad\qquad\qquad\qquad  +\sum_{i\in\oJ_{\pm}}(-\sigma_{i-1}+\sigma_i)\cdot dz_i \Big): \\
& & \qquad\qquad e^{i\vp}\in\sone,\, z_j-z_{j+1}=0,\, j\in J,\,  (\tau,\sigma)\in (\R\times\C^{|J|})\setminus 0\Big\}.\nonumber
\end{eqnarray}
\ms

Since $WF(K_n^{z_0})'=\bigcup_{J\in\mathcal J} C_n^J$, to prove  (\ref{WFtwn}), it suffices to show that each of the $2^{n-1}(2^{n}-1)$ 
compositions $C_n^J\circ WF^I,\, J\in\mathcal J,\, I\in\mathcal I\setminus\{\emptyset\}$, is contained in one of the $Sc^{(m)}(\mu)$ for 
some $0\le m\le n$ with $m\equiv n\hbox{ mod }2$. In fact, from (\ref{WFI}) and the representation of $C_n^J$ above, 
one sees that  each $C_n^J\circ WF^I$ is either empty (e.g.,  if  $\oJ^c\cap I^c\ne\phi$), or a (potentially) nonempty subset 
of $Sc^{(m)}(\mu)$, when $m=|\oJ^c|\equiv n\mod 2$, yielding  (\ref{WFtwn}) and finishing the proof of Thm. \ref{thm WF}. \end{proof}

\section{Parity symmetry}\label{sec evenodd}

{\agt We now come to an important symmetry property  which significantly improves the imaging obtained via {\mltext our reconstruction method}.}
Recall that what we have been denoting $\tw$ is in fact  $\widehat{\omega^+}$, the partial Fourier transform  
of the correction term $\omega^+$ in the CGO solution  (\ref{intro:eq13'})
of the Beltrami equation (\ref{Beltrami}) with multiplier $\mu$. Similarly, the solution $\omega^-$ in (\ref{intro:eq13'}) 
corresponding  to $-\mu$ has partial Fourier transform $\widehat{\omega^-}$.  Astala and P\"aiv\"arinta \cite{Astala2006} 
showed that both $\omega^{+}$ and $\omega^{-}$ can be reconstructed from the Dirichlet-to-Neumann map 
$\Lambda_{\sigma}$. We  show that by taking their difference we can suppress the $\tw_n$ for \emph{even} $n$, and thus 
suppress some of the singularities described in the preceding sections, most importantly the strong singularity at 
$Sc^{(0)}(\mu)\subset N^*\{t=0\}$ coming from $\tw_2$.
\medskip

Start by writing the two Neumann series,
$$\widehat{\omega^+} \sim \sum_{n=1}^{+\infty}\widehat{\omega^+_n} = \widehat{\omega^+_{odd}}+\widehat{\omega^+_{even}},\qquad
\widehat{\omega^-} \sim \sum_{n=1}^{+\infty}\widehat{\omega^-_n} = \widehat{\omega^-_{odd}}+\widehat{\omega^-_{even}},$$
where 
$\widehat{\omega^{\pm}_{odd}}$ (resp. $\widehat{\omega^{\pm}_{even}}$) consists of the $n$ odd (resp. even) terms 
in the expansion corresponding to $\widehat{\omega^{\pm}}$. Recall that, as a function of $\mu$, $\widehat{\omega^{\pm}_n}$  is a multilinear  form of degree $n$.  
\medskip

\begin{proposition}\label{prop:sym}
Each of $\widehat{\omega^+_{odd}}$ and $\widehat{\omega^+_{even}}$ has the same parity in $t$ as the multilinear degrees of its terms, i.e.,
\begin{equation}
\widehat{\omega^+_{odd}} = -\widehat{\omega^-_{odd}}\, \hbox{ and }\, \widehat{\omega^+_{even}} = \widehat{\omega^-_{even}}.
\end{equation}
Equivalently, 
\begin{equation}
\widehat{\omega^+_{odd}} = \frac{\widehat{\omega^+}-\widehat{\omega^-}}{2}\, \hbox{ and }\,  \widehat{\omega^+_{even}} = \frac{\widehat{\omega^+}+\widehat{\omega^-}}{2}.
\end{equation}
\end{proposition}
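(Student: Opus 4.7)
The plan is to reduce the parity statement to the fact that each term $\omega_n^\pm$ is a \emph{homogeneous} multilinear form of degree $n$ in the Beltrami coefficient, combined with the tautological observation that $\omega^-$ is nothing other than $\omega^+$ with $\mu$ replaced by $-\mu$. Everything else is bookkeeping.

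First, I would compare the two Beltrami problems. By (\ref{Beltrami_intro})/(\ref{intro:eq13'}), the correction $\omega_{-\mu}^+$ attached to Beltrami coefficient $-\mu$ is by construction the same distribution as $\omega_\mu^-$, since the two problems coincide after the substitution $\mu\mapsto -\mu$. In the notation of Section \ref{sec frechet}, this reads
\[
W_k^-(\mu)=W_k^+(-\mu)\quad\text{for every }\mu\in X,\ k\in\mathbb C,
\]
and therefore $\omega_\mu^-(\,\cdot\,,k)=\omega_{-\mu}^+(\,\cdot\,,k)$.

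Next I would invoke Theorem \ref{thm:Frechet} (equivalently Theorem \ref{theo: Frechet derivatives}) together with the Neumann expansion (\ref{scatteringseries}). Each term $\omega_n^\pm(\,\cdot\,,k)$ is given by the $n$-th Fr\'echet derivative of $W_k^\pm$ at $\mu=0$ evaluated at the diagonal $(\mu,\dots,\mu)$; in particular it is a \emph{homogeneous form of degree $n$} in its argument. Combining this homogeneity with the identity of the previous step gives
\[
\omega_n^-(\,\cdot\,,k)\;=\;\omega_n^+[-\mu](\,\cdot\,,k)\;=\;(-1)^n\,\omega_n^+(\,\cdot\,,k).
\]
Applying the partial Fourier transform $\mathcal F_{\tau\to t}$ (which is linear and commutes with the scalar $(-1)^n$), one obtains the term-by-term identity
\[
\widehat{\omega_n^-}(z,t,e^{i\varphi})=(-1)^n\,\widehat{\omega_n^+}(z,t,e^{i\varphi}),\qquad n\ge 1.
\]

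Finally, I would sum this identity over $n$ of fixed parity. Summing over odd $n$ gives $\widehat{\omega^-_{odd}}=-\widehat{\omega^+_{odd}}$; summing over even $n$ gives $\widehat{\omega^-_{even}}=\widehat{\omega^+_{even}}$. The equivalent statements
\[
\widehat{\omega_{odd}^+}=\tfrac{1}{2}\bigl(\widehat{\omega^+}-\widehat{\omega^-}\bigr),\qquad
\widehat{\omega_{even}^+}=\tfrac{1}{2}\bigl(\widehat{\omega^+}+\widehat{\omega^-}\bigr)
\]
then follow by adding and subtracting the two scattering series $\widehat{\omega^\pm}=\widehat{\omega^\pm_{odd}}+\widehat{\omega^\pm_{even}}$. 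There is no real obstacle here; the only point requiring care is formal, namely that the scattering series (\ref{scatteringseries}) is a priori a formal power series, so the identities should be read term-by-term in $n$ (as guaranteed by Theorem \ref{thm:Frechet}) before being reassembled, rather than as equalities between convergent sums.
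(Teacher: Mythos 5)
Your proof is correct and amounts to the alternative route the paper itself flags in a parenthetical at the end of its proof: it argues from the $n$-homogeneity of $\mu\mapsto\omega_n^{\pm}$ together with the identity $\omega^-_\mu=\omega^+_{-\mu}$, rather than by explicit sign-tracking through the Neumann iterates $u_n^\pm$ as in the paper's main line of argument. The two arguments are equivalent, and your closing remark about reading the identities term-by-term in the formal series is a sensible caution that the paper leaves implicit.
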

\begin{proof}
Let $\overline u^{\pm} = - \overline \partial \omega^{\pm}$. As in Sec. \ref{Construction of CGO solutions},  $u^{\pm}$ is the solution of the integral equation \eqref{belteq3_u},
\begin{equation}
(I+A^{\pm}\rho)u^{\pm} = \mp \overline\alpha,
\end{equation}
where $A^{\pm} = \mp( \overline \alpha P + \overline \nu S)$, and $\alpha$ and $\nu$ were defined in \eqref{def:APalpha}. 
Since $A^+ = - A^-$ we have $u^+_1 = -\overline{\alpha} = -u^-_1$, $u^+_2 =-A^+ \overline{u_1^+} = - (-A^-(- \overline{u_1^-}))=u^-_2$ and by induction, for $n\geq 1$,
\begin{equation*}
u^+_{n+2} = A^+ \overline{A^+ \overline u^+_{n}} = (-1)^{n}A^- \overline{ A^- \overline{u^-_{n}}} = (-1)^{n} u^-_{n+2}.
\end{equation*}

(Another way of seeing this is that $\mu\to\tw_n$ is a form of degree $n$, with the same multilinear kernel applied to both $\pm\mu$.) \qedhere
\end{proof}

Prop. \ref{prop:sym}  provides a method to isolate the even and the odd terms in the expansion of $\tw$. 
In particular, by imaging using $\widehat{\omega_{odd}^+}$, we can eliminate the strong singularities of $\tw_2$ at $Sc^{(0)}(\mu)=N^*\{t=0\}$, described in (\ref{twone claim}),
and in fact the singularities  there of all the even terms since, by (\ref{WFtwn}), these only arise from $\tw_n$ for  even $n$.

\section{Multilinear operator theory}\label{sec higher}

Following the analysis of $\tw_2$, one can also describe the singularities of $\tw_3$, but now having to restrict  away from $t=0$.
{\revised The singularities of $\tw_3$ are of interest, since, after the symmetrization considerations from
the previous section are applied, $\tw_3$  is the first higher order term encountered after $\tw_1$.}
Recall  from above  that, if $\mu$ is {\mltext a piecewise smooth function} with jumps ($m=-1$), $\tw_2$ has  a singularity at $Sc^{(0)}(\mu)=N^*\{t=0\}$ 
as strong as  that  of $\tw_1$  at $Sc^{(1)}(\mu)$, and that its presence is due to the singularity of $K_2^{z_0}$ at the 
submanifold $L_2^1=\{t=0\}\subset L_2^\emptyset\subset \R\times\sone\times\C^2$. 
Similarly, in order to analyze $\tw_3$, we will need to 
localize $K_3^{z_0}$ away from $L_3^{12}=\{t=0\}\subset \R\times\sone\times\C^3$, 
which results in  a kernel that can then be decomposed into a sum of two kernels, each having singularities on one of two nested pairs, 
$ L_3^1\subset L_3^\emptyset$ or $L_3^2\subset L_3^\emptyset$, but not at $L_3^1\cap L_3^2=L_3^{12}=\{t=0\}$. 
We will show that applying these to $\mu\otimes\mu\otimes\mu$, as in (\ref{twn}), 
does not just result in terms with WF contained in $Sc^{(3)}(\mu)\cup Sc^{(1)}(\mu)$, as was shown in Thm. \ref{thm WF}, 
but  a more precise statement can be made:

\begin{theorem}\label{thm omegatwo}
If  $\mu\in I^m(\gamma)$ with $\gamma$ satisfying the curvature condition (\ref{curv}),
then  $Sc^{(3)}(\mu)$,  defined as in (\ref{C_1 on WF mu}), is a smooth Lagrangian manifold  in $T^*(\R\times\sone)\setminus 0$; and 

\be\label{twtwo conclusion}
\quad \tw_3|_{t\ne 0}\in I^{3m+2,-\frac12}\left(Sc^{(3)}\left(\mu\right), Sc^{(1)}\left(\mu\right)\right).
\ee 
\end{theorem}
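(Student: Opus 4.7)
The plan is to mirror the proof of Theorem \ref{thm omegaone}, working with the richer lattice of Lagrangians associated with $K_3^{z_0}$. By (\ref{Kn}) with $n=3$, $K_3^{z_0}$ is a sum of four terms, singular along the family $\{L_3^J : J \subset \{1,2\}\}$; the finest stratum $L_3^{12}$ is precisely where both Cauchy-like factors $(\overline{z_j} - \overline{z_{j+1}})^{-1-\epsilon_j}$ become simultaneously singular, and its contribution under composition with $\mu\otimes\mu\otimes\mu$ lands in $Sc^{(0)}(\mu) \subset N^*\{t = 0\}$ by Thm.\ \ref{thm WF}. Restricting to $\{t \ne 0\}$ excludes this contribution, so after a microlocal partition of unity I can write
\begin{equation*}
K_3^{z_0}\big|_{t\ne 0} = K_3^{z_0,(1)} + K_3^{z_0,(2)},
\end{equation*}
with $K_3^{z_0,(j)}$ microlocally supported away from $\{z_{3-j} = z_{4-j}\}$. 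In $K_3^{z_0,(j)}$ the ``other'' Cauchy factor is smooth and absorbs into the amplitude, so each piece is a nested conormal distribution associated to the pair $L_3^\emptyset \supset L_3^j$.

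Next, I would apply each $K_3^{z_0,(j)}$ to $\mu \otimes \mu \otimes \mu$ using the conormal representation (\ref{mu osc int}) of $\mu \in I^m(\gamma)$, producing oscillatory integrals for $\tw_3^{(j)} := T_3^{z_0,(j)}(\mu \otimes \mu \otimes \mu)$ with phase
\begin{equation*}
\Phi_j = \tau\bigl(t + 2\re(e^{i\vp}(-z_1 + z_2 - z_3))\bigr) + \sigma_j \cdot (z_j - z_{j+1}) + \sum_{i=1}^{3}\theta_i\, g(z_i),
\end{equation*}
with $\sigma_j \in \R^2$ the auxiliary variable whose vanishing parametrizes the larger Lagrangian. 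Homogenizing by $\eta_i = \tau z_i$, exactly as in the passage from (\ref{Tonemumu}) to (\ref{Toneall}), yields a phase $\tilde\Phi_j$ with elliptic phase variables $(\tau, \theta_1, \theta_2, \theta_3, \eta_1, \eta_2, \eta_3)$ (giving $N = 10$) and auxiliary variable $\sigma_j$ (giving $M = 2$). Tracking amplitude orders (with $\delta^{(4-|\vec\epsilon|)}$ contributing $4 - |\vec\epsilon|$ powers of $\tau$, the surviving Cauchy factor giving one $\sigma_j$-order, and the Jacobian $\tau^{-6}$ from homogenization) produces a highest symbol bi-order $(\tilde p, \tilde l) = (3m - 3, -1)$.

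Then, using the curvature condition (\ref{curv}) as in the $\tw_2$ analysis, I would verify that $\tilde\Phi_j|_{\sigma_j = 0}$ is a clean phase function with excess $e_0 = 0$ parametrizing $Sc^{(3)}(\mu)$ (which in particular establishes the smooth Lagrangian claim), and that the full $\tilde\Phi_j$ is clean with excess $e_1 = 1$ parametrizing $Sc^{(1)}(\mu)$---the extra one-parameter freedom corresponds to the location of $z_j = z_{j+1}$ when the triple configuration collapses onto the single-scattering locus. Proposition \ref{ipl multiphase} then gives
\begin{equation*}
p' = (3m-3) + (-1) + \tfrac{10+2+0+1}{2} - \tfrac{2}{4} = 3m + 2, \qquad l' = 1 - \tfrac{3}{2} = -\tfrac{1}{2},
\end{equation*}
so each $\tw_3^{(j)} \in I^{3m+2,-1/2}(Sc^{(3)}(\mu), Sc^{(1)}(\mu))$, and summing over $j \in \{1,2\}$ yields (\ref{twtwo conclusion}).

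The main obstacle I anticipate is the non-nested character of the paired Lagrangians: each piece $K_3^{z_0,(j)}$ independently produces a paired-Lagrangian contribution whose ``degenerate'' Lagrangian is only the portion of $Sc^{(1)}(\mu)$ arising from the collapse $z_j = z_{j+1}$. Verifying that the two portions recombine into all of $Sc^{(1)}(\mu)$ without spurious contributions, and that the constant-rank Hessian conditions of Proposition \ref{ipl multiphase} hold uniformly through each branch, hinges delicately on (\ref{curv}) and the resulting transversality and finiteness of tangency points of $\gamma$ with prescribed normal directions; this is the analytic heart of what distinguishes the $\tw_3$ analysis from the cleaner nested situation handled for $\tw_2$.
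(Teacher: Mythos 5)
Your proposal follows essentially the same route as the paper's proof in its overall structure: restrict to $\{t\ne 0\}$, split $K_3^{z_0}$ via a spatial partition of unity into two pieces $K_3^{z_0,(j)}$ each nested-conormal for the pair $L_3^\emptyset\supset L_3^j$, homogenize via $\eta_i=\tau z_i$, and apply Prop.~\ref{ipl multiphase} with $(N,M)=(10,2)$ to obtain $(p',l')=(3m+2,-\frac12)$. You arrive at the correct final orders, and the smooth-Lagrangian claim for $Sc^{(3)}(\mu)$ is handled the same way. Two points, one minor, one a genuine gap.

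The minor point is your rationale for restricting to $\{t\ne 0\}$. You attribute it to the $L_3^{12}$ contribution landing in $Sc^{(0)}(\mu)$ via Thm.~\ref{thm WF}, but $Sc^{(0)}(\mu)$ does not occur in $WF(\tw_3)$ at all, since $0\not\equiv 3\pmod 2$ and Thm.~\ref{thm WF} only allows $Sc^{(1)}(\mu)\cup Sc^{(3)}(\mu)$. The actual reason for the restriction is geometric: $L_3^1\cap L_3^2=L_3^{12}\subset\{t=0\}$, so over $\{t\ne 0\}$ the two codimension-$3$ strata are disjoint, the spatial partition of unity separates them, and each piece is then a genuinely \emph{nested} conormal distribution. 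Without the restriction, the two strata meet at the corner $L_3^{12}$ and the decomposition into two simple nested pairs fails.

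The genuine gap concerns the frequency region in which you apply Prop.~\ref{ipl multiphase}. You pass directly from the oscillatory representation of each $K_3^{z_0,(j)}$ to the homogenized form, treating $(\tau,\theta_1,\theta_2,\theta_3,\eta_1,\eta_2,\eta_3)$ as a single elliptic block. That grouping requires these variables to be mutually comparable, and this is not automatic: the conormal frequencies $\theta_2,\theta_3$ of $\mu(z_2),\mu(z_3)$ are tied through $C_3^2$ to $\zeta_2=-\zeta_3=\pm(\sigma-2\tau e^{-i\vp})$, whose magnitude is comparable to $|\tau|$ only where $|\sigma|\lesssim|\tau|$. Outside that cone the $S^{\tilde p,\tilde l}$ estimates needed for Prop.~\ref{ipl multiphase} simply do not hold. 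The paper therefore inserts a conic cutoff in $|\sigma|/|\tau|$ and treats the region $\{|\sigma|\ge c|\tau|\}$ by a separate argument: there the phase in $(\tau,\sigma)$ alone is nondegenerate, $K_3^2$ becomes an ordinary Fourier integral kernel in $I^{3/2}(C_0\times N^*\{z_2=z_3\})$, and a transverse-intersection composition with $\mu\otimes\mu\otimes\mu$ (microlocalized appropriately) places this contribution in $I^{3m+\frac32}(Sc^{(1)}(\mu))$, which is then absorbed into $I^{3m+2,-\frac12}(Sc^{(3)}(\mu),Sc^{(1)}(\mu))$. Your proposal omits this bifurcation, so the order computation as written is only valid microlocally in a proper conic subset of phase space, and the large-$|\sigma|$ region is left unaccounted for.
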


\noindent{\bf Remark.} For $m=-1$, this is in $I^{-1}\left(Sc^{(1)}\left(\mu\right)\setminus Sc^{(3)}\left(\mu\right)\right)$, 
and thus is 1/2  derivative smoother than $\tw_1$ on $Sc^{(1)}(\mu)$. 
On the other hand, it is also in $I^{-\frac32}\left(Sc^{(3)}\left(\mu\right)\setminus Sc^{(1)}\left(\mu\right)\right)$, which is a full derivative smoother than $\tw_1$.
\ms

  To put this in perspective we first discuss what should be the leading terms 
contributing to $\tw_n$ for general $n\ge 3$. 
  The  analysis for $\tw_3|_{t\ne 0}$ given below applies 
more generally to $\tw_n$ if we localize $K_n^{z_0}$ even more strongly: not just away from  $t=0$, but away from {\em all} of the submanifolds $L_n^J\subset \R\times
\sone\times\C^{n}$ with $|J|\ge 2$. Now, for $j\ne j'$, $L_n^j\cap L_n^{j'}= L_n^{jj'}$; by localizing away from all of 
the $L_n^J$ with $|J|=2$, by a partition of unity the kernel $K_n^{z_0}$  can be decomposed into a sum of $n-1$ terms, each a nested 
conormal distribution associated with the pair $L_n^\phi\supset L_n^j,\, j=1,\dots,n-1$, resp. When these pieces of 
$K_n^{z_0}$ are applied  to $\otimes^{n}\mu$,  as in (\ref{twn}),  the results have WF  in $Sc^{(n)}(\mu)\cup Sc^{(n-2)}(\mu)$, 
and again can be shown to belong to $I^{p,l}\left(Sc^{(n)}\left(\mu\right), Sc^{(n-2)}\left(\mu\right)\right)$. However, 
as this requires localizing away from $\bigcup_{|J|\ge 2} L_n^J$, which is strictly larger than $L_n^{12\cdots (n-1)}$ if $n\ge 4$;  
thus, the analysis here is inconclusive concerning the singularities of $\tw_n|_{t\ne 0}$, and thus we only present the details for $\tw_3$.
 \ms

We now start the proof of Thm. \ref{thm omegatwo} by noting that,
for $n=3$, the lattice of submanifolds (\ref{Ln}) to which the trilinear operator $T_3^{z_0}$ is associated is  a simple diamond, $L_3^\emptyset\supset L_3^1,L_3^2\supset L_3^{12}$.
In the region $\{t\ne 0\}$, the two submanifolds $L_3^1$ and $L_3^2$ are disjoint. Hence, by a partition of unity in the spatial variables, we can write
\be\label{three mus}
\tw_3|_{t\ne 0}=\langle K_3^1+K_3^2,\mu\otimes\mu\otimes\mu\rangle,
\ee
where each $K_3^j$ is associated with the nested pair $L_3^\emptyset\supset L_3^j,\, j=1,2$. Since these two terms are so similar, we just treat the $K_3^2$ term. 
\ms

The submanifolds $L_2^2\subset L_3^\phi\subset \R\times\sone\times\C^3$ are given by
\begin{eqnarray}\label{Ltwos}
L_3^\emptyset&=&\{t-2\re\left(e^{i\vp}\left(z_1-z_2+z_3\right)\right)=0\} \hbox{ and} \nonumber \\
&  & \\
L_3^2&=& \{ t-2\re\left(e^{i\vp}\left(z_1-z_2+z_3\right)\right)=0,\, z_2-z_3=0\}.\nonumber
\end{eqnarray}
For $K_3^2$ we are localizing away from $L_3^1$, so that $z_1-z_2\ne 0$ on the support of the kernels  below. 
Thus, the factors $(\overline{z_1}-\overline{z_2})^{-1+\epsilon_1}$ in (\ref{Kn}) are smooth, 
and their dependence on $\epsilon_1$ irrelevant for this analysis. Thus, $K_3^2$ is  a sum of two terms, 
each of which we will still denote $K_3^2$,  given by
\be\label{Ktwo}
K_3^2=\int_{\R^3} e^{i[\tau\left(t-2{\mltext \text{Re}\,}\left(e^{i\vp}\left(z_1-z_2+z_3\right)\right)\right) + \left(z_2-z_3\right)\cdot\sigma]} a_{p,l}(*;\tau;\sigma)\, d\tau\, d\sigma,
\ee
where $*$ denotes the spatial variables and $a_{p,l}$ is a symbol-valued symbol of bi-order $(3,-1)$ and $(2,0)$, resp.

If, for any $c>0$, we introduce a smooth cutoff into the amplitude which is a function of $|\sigma|/|\tau|$ 
and supported in the region $\{|\sigma|\ge c|\tau|\}$,  the amplitude becomes a standard symbol of order $p+l=2$
in the phase variables $(\tau,\sigma)\in\R^3\setminus 0$. The phase function is nondegenerate and parametrizes the canonical relation
(with $C_0$ as in (\ref{C zero sharp})),
\begin{eqnarray*}
C_{0\times N}&:=& C_0\times N^*\{z_2=z_3\}\\
&\, =& \big\{\big(2\re\left(e^{i\vp}z_1\right),e^{i\vp},\tau,2\tau\im\left(e^{i\vp}z_1\right);\\
& & \qquad z_1,z_2,z_2,2\tau e^{-i\vp},\zeta_2,-\zeta_2\big): \\
& &\qquad\quad e^{i\vp}\in\sone, (z_1,z_2)\in\C^2,(\tau,\zeta_2)\in\R^3\setminus 0\big\}.
\end{eqnarray*}
This is a nondegenerate canonical relation: the projection $\pi_R:C_{0\times N}\to T^*\C^3\setminus 0$ is an immersion 
and the projection $\pi_L:C_{0\times N}\to T^*(\R\times\sone)\setminus 0$ is a submersion. Thus, this contribution to 
$K_3^2$ belongs to $I^{2+\frac32-\frac84}(C_{0\times N})=I^{\frac32}(C_{0\times N})$.
Due to the support of the 
amplitude of this term, $\pi_R(C_{0\times N})\subset\{|\zeta_1|\sim|\zeta_2|=|\zeta_3|\}$, and by reasoning similar to that used  
in the analysis of $\tw_1$,  one concludes that $\mu\otimes\mu\otimes\mu\in I^{3m}
\left(N^*\left(\gamma\times\gamma\times\gamma\right)\right)$ microlocally on this region. 
Hence, the composition
$C_{0\times N}\circ N^*\left(\gamma\times\gamma\times\gamma\right)
\subset C_0\circ N^*\gamma=:Sc^{(1)}(\mu)$
is covered by the transverse intersection  calculus,
and this contribution to $\tw_3$ belongs to 
\be\label{first twtwo term}
I^{3m+\frac32}\left(Sc^{(1)}(\mu)\right).
\ee

Now consider the contribution to (\ref{Ktwo})  from the region $\{|\sigma|\le\frac12|\tau|\}$. Writing out the 
representations of each of the three $\mu$ factors in (\ref{three mus}) as conormal distributions, we 
first note that,  using  the parametrization in (\ref{Cnj}) for $C_3^2$ and the constraint $|\sigma|\le\frac12|\tau|$, we can read off that, on $\pi_R$ of the wave front relation, 
$$|\zeta_1|=2|\tau|\hbox{ and }|\zeta_j|=|\pm(\sigma-2\tau e^{-i\vp})|\ge \frac32|\tau|,\, j=2,3.$$
Hence, again we are acting on a part of $\mu\otimes\mu\otimes\mu$ which is microlocalized where $|\zeta_1|\sim|\zeta_2|\sim|\zeta_3|$. As a result, in (\ref{Ttwoall}) below, the $\theta_j$ are grouped with $\tau$ as ``elliptic'' variables for the symbol-valued symbol estimates.
Mimicking the analysis in and following (\ref{Toneall}), homogenize $z_1,z_2,z_3$ by setting $\eta_j=\tau z_j,\, j=1,2,3$. This  leads to an expression,

\begin{equation}\label{Ttwoall}
\int e^{i\tilde\Psi} \,a_{\tilde{p},\tilde{l}}\big(*;(\tau,\theta_1,\theta_2,\theta_3,\eta_1,\eta_2,\eta_3);\sigma\big)\, d\tau\, d\theta_1\, d\theta_2\, d\theta_3\, d\eta_1\, d\eta_2\, d\eta_3\, d\sigma ,
\end{equation}
with phase
\begin{eqnarray}\label{tPsi}
\tilde\Psi&=&\tilde\Psi(t,e^{i\vp};\, \tau,\theta_1,\theta_2,\theta_3,\eta_1,\eta_2,\eta_3;\, \sigma)\nonumber \\
&:=&\tau t -2 Re\big(e^{i\vp}(\eta_1-\eta_2+\eta_3)\big)+\theta_1 g\big(\frac{\eta_1}{\tau}\big)\nonumber \\
& &\quad 
+\theta_2 g\big(\frac{\eta_2}{\tau}\big)+\theta_3 g\big(\frac{\eta_3}{\tau}\big)
+\sigma\cdot\big(\frac{\eta_2-\eta_3}{\tau}\big)\nonumber
\end{eqnarray}
on $(\R\times\sone)\times(\R_{\tau,\theta_1,\theta_2,\theta_3,\eta_1,\eta_2,\eta_3}^{10}\setminus 0)\times
\R_{\sigma}^2$ and symbol-valued symbols   with bi-orders $(\tilde{p},\tilde{l})=(3m-3,-1)$ and 
$(3m-4,0)$, resp. As with the phase $\tilde\Phi$ that arose in the analysis of $\tw_1$, $\tilde\Psi$ is a multiphase function: $\tilde\Psi_0=\tilde
\Psi|_{\sigma=0}$ is nondegenerate (excess $e_0=0$) and parametrizes $Sc^{(3)}(\mu)$,
while the full $\tilde\Psi$ is clean (excess $e_1=1$) and parametrizes $Sc^{(1)}(\mu)$. 
Applying  Prop. \ref{ipl multiphase}, with $N=10, M=2$, the terms in (\ref{Ttwoall}) with  amplitudes of bi-order $(3m-3,-1)$, 
$(3m-4,0)$, resp.,   yield  elements of $I^{3m+2,-\frac12}(Sc^{(3)}(\mu),Sc^{(1)}(\mu))$ and 
$I^{3m+2,-\frac32}(Sc^{(3)}(\mu),Sc^{(1)}(\mu))$, resp.; since the former space  contains  the latter, 
and furthermore contains the space in (\ref{first twtwo term}), we conclude that $\tw_3|_{t\ne0}\in I^{3m+2,-\frac12}(Sc^{(3)}(\mu),Sc^{(1)}(\mu))$.  
This finishes the proof of Thm. \ref{thm omegatwo}. \qed
\ms

\section{Computational studies}\label{Computational studies}

\noindent
In the idealized infinite bandwidth model discussed above, knowledge of  $\omega_1(z_0,k)$ for all complex 
frequencies $k$, and thus $T_1^{z_0}\mu=\tw(z_0,t,e^{i\vp})$ for all $(t,\,e^{i\vp})$, determines $\mu$ mod $C^\infty
$. A more physically realistic model, band limiting to $|k|\le k_{max}$, requires a windowed Fourier transform. 
This corresponds to convolving in the $t$ variable with a smooth cutoff at length-scale $\sim k_{max}^{-1}$,  
rendering the reconstruction less accurate. This section examines numerical simulations and how they are affected by this bandwidth issue. 

We first introduce a new reconstruction algorithm from the Dirichlet-to-Neumann map $\Lambda_\sigma$, 
as well as the algorithm used in the simulations. Then we will present our numerical results. In this section we take $\Omega$ to be the unit disk, $\Omega=D(0,1)$.

\subsection{Reconstruction algorithm}\label{algorithm}

The results presented in the preceding sections give rise to a linear reconstruction scheme to approximately 
recover a conductivity $\sigma$ from its Dirichlet-to-Neumann map $\Lambda_{\sigma}$. This can be summarized in the following steps:
\begin{enumerate}[(i)]

\item Find $f_{\pm \mu}(z,k)$, and so $\omega^{\pm}(z,k)$, for $z \in \partial \Omega$ and $k \in \C$, by solving the boundary integral equation
\begin{equation}\label{BIE}
f_{\pm \mu}(z,k) +e^{ikz} = (\Pc_{\pm \mu} + \Pc_0^k)f_{\pm \mu}(z,k), \qquad z \in \partial \Omega,
\end{equation}
where $\Pc_{\pm \mu}$ and $\Pc_0^k$ are projection operators constructed from $\Lambda_{\sigma}$. See \cite{Astala2006} and \cite[Section 16.3.3]{Mueller2012} for full details.

\item Write $k = \tau e^{i\varphi}$. Apply the one-dimensional Fourier transform $\F_{\tau \mapsto t}$ and 
the complex average (\ref{omega a}) in order to obtain $\widehat \omega^{a, \pm}(t,e^{i \varphi})$, with $a \equiv 1/ \sqrt{2}$.

\item Taking into account the parity result Prop. \ref{prop:sym}, define $\widehat \omega^a_{\mbox{\tiny diff}} := \frac 1 2 (\widehat \omega^{a,+}- \widehat \omega^{a, -})$. 
Apply either the exact inversion formula \eqref{t0a inversion} or the $\Lambda$-tomography analogue \eqref{t0a inversion2}  
with $\widehat \omega^a_{\mbox{\tiny diff}}$ instead of $\widehat \omega^a_1$, in order to obtain an approximation $\mu_{\mbox{\tiny appr}}$ to $\mu$.

\item The approximate conductivity is found with the identity $\sigma_{\mbox{\tiny appr}} = (1-\mu_{\mbox{\tiny appr}})/(1+\mu_{\mbox{\tiny appr}})$.
\end{enumerate}

\subsection{High-precision data assumption}

In the numerical reconstructions presented below,  the spectral parameter $k$ ranges in the disk  $\{|k|<R\}$ 
with cutoff frequency $R=60$. Such a large radius $R$ is needed for demonstrating the crucial properties of the new method; 
with a smaller radius the windowing of the Fourier transform would smooth out important features in the CGO solutions. 

Using such a large $R$ in practice would require very high precision EIT measurements, 
which cannot be achieved by current technology. However, it is possible to evaluate the needed CGO solutions 
computationally when $\sigma$  is known. (Remark: it is possible to compute useful reconstructions from real EIT 
measurements using the new method  combined with sparsity-promoting inversion algorithms, but we do not discuss such approaches further in this paper.)
This is done as in \cite{Astala2014} by solving the Beltrami equation
\begin{equation}
  \dbar_z f_\mu(z,k) = \mu(z)\,\overline{\partial_z f_\mu(z,k)},
\end{equation}
which yield very accurate solutions even for large $|k|$. From the point of view of the classical $\bar \partial$ reconstruction method \cite{Knudsen2009,Mueller2003,Mueller2012,Siltanen2000} for $C^2$ conductivities, this is the analogue of solving the Lippmann-Schwinger equation to construct the CGO solutions. 

In this section the CGO remainders $\omega^{\pm}(z,k)$, with $z \in \partial \Omega$ and $|k| < 60$, 
are constructed by solving the Beltrami equation following the Huhtanen and Per\"am\"aki  
approach \cite{Astala2014,Huhtanen2012} (see also Section \ref{Construction of CGO solutions} for more details). 
We then follow steps (ii)-(iv) of the algorithm in Section \ref{algorithm} to obtain 2D reconstructions.

\subsection{Rotationally symmetric cases}

\begin{figure}
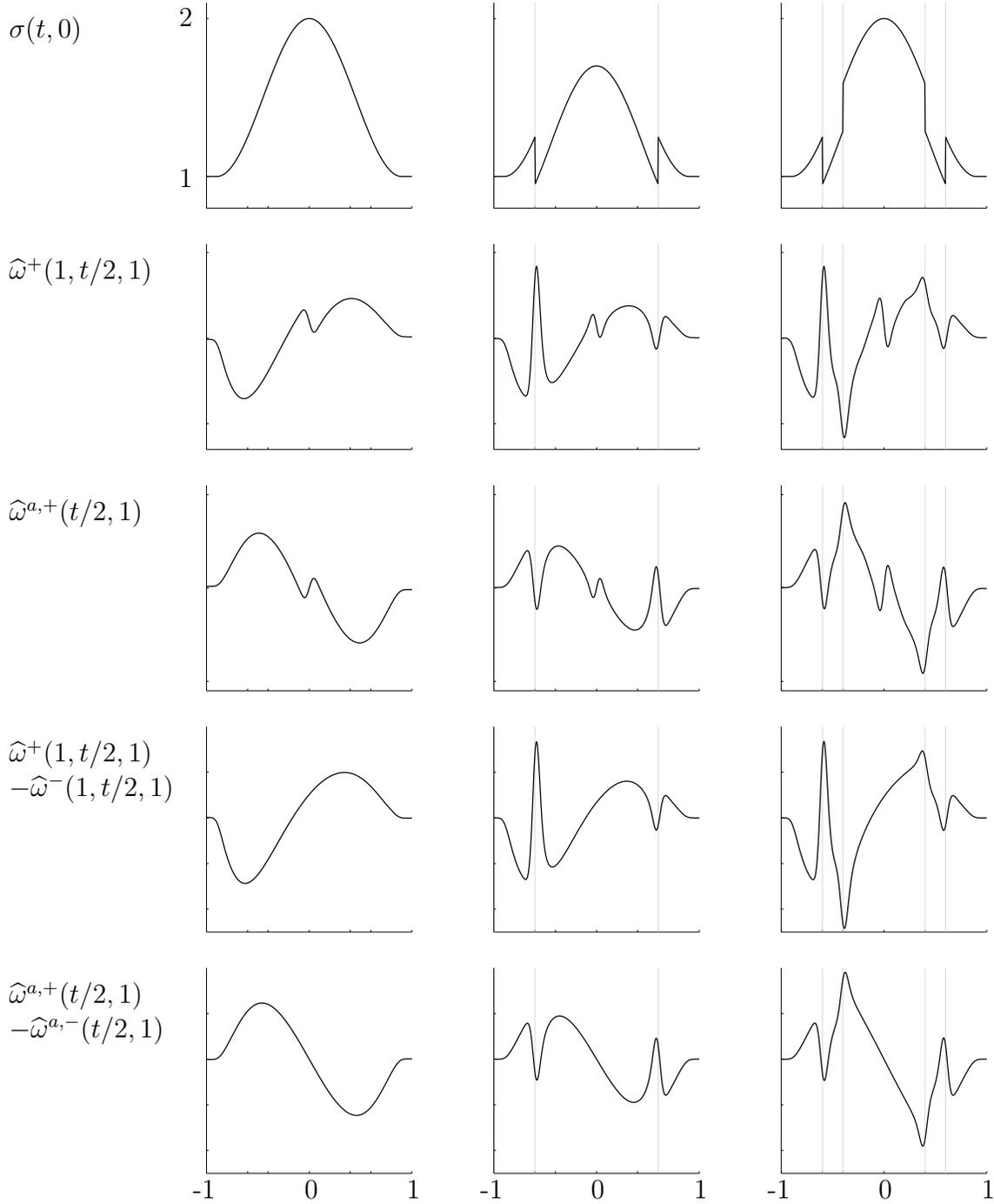

\begin{picture}(300,530)(-30,-10)
% Top row
\put(-35,430){\includegraphics[width=3.2cm]{cond0.eps}}
\put(90,430){\includegraphics[width=3.2cm]{cond1.eps}}
\put(215,430){\includegraphics[width=3.2cm]{cond2.eps}}
% Second row
\put(-35,325){\includegraphics[width=3.2cm]{omegaplus0.eps}}
\put(90,325){\includegraphics[width=3.2cm]{omegaplus1.eps}}
\put(215,325){\includegraphics[width=3.2cm]{omegaplus2.eps}}
% Third row
\put(-35,220){\includegraphics[width=3.2cm]{omegaplusave0.eps}}
\put(90,220){\includegraphics[width=3.2cm]{omegaplusave1.eps}}
\put(215,220){\includegraphics[width=3.2cm]{omegaplusave2.eps}}
% Fourth row
\put(-35,115){\includegraphics[width=3.2cm]{omegasym0.eps}}
\put(90,115){\includegraphics[width=3.2cm]{omegasym1.eps}}
\put(215,115){\includegraphics[width=3.2cm]{omegasym2.eps}}
% Bottom row
\put(-35,10){\includegraphics[width=3.2cm]{omegasymave0.eps}}
\put(90,10){\includegraphics[width=3.2cm]{omegasymave1.eps}}
\put(215,10){\includegraphics[width=3.2cm]{omegasymave2.eps}}
% Horizontal axis labels
\put(-40,0){-1}
\put(8.5,0){0}
\put(53,0){1}
\put(85,0){-1}
\put(133.2,0){0}
\put(178,0){1}
\put(210,0){-1}
\put(258.2,0){0}
\put(303,0){1}
% Vertical axis labels
\put(-120,505){$\sigma(t,0)$} % Top row
\put(-46,510){2} % Top row
\put(-46,440){1} % Top row
\put(-120,400){$\widehat{\omega}^+(1,t/2,1)$} % Second row
%\put(-67,473){0.5} % Second row
%\put(-58,427){0} % Second row
%\put(-71,381){-0.5} % Second row
\put(-120,295){$\widehat{\omega}^{a,+}(t/2,1)$} % Third row
%\put(-67,353){0.5} % Third row
%\put(-58,304){0} % Third row
%\put(-71,251){-0.5} % Third row
\put(-120,190){$\widehat{\omega}^+(1,t/2,1)$} % Fourth row
\put(-120,175){$-\widehat{\omega}^-(1,t/2,1)$} % Fourth row
%\put(-67,213){0.5} % Fourth row
%\put(-58,188){0} % Fourth row
%\put(-71,163){-0.5} % Fourth row
%\put(-63,139){-1} % Fourth row
\put(-120,85){$\widehat{\omega}^{a,+}(t/2,1)$} % Bottom row
\put(-120,70){$-\widehat{\omega}^{a,-}(t/2,1)$} % Bottom row
%\put(-67,93){0.5} % Bottom row
%\put(-58,68){0} % Bottom row
%\put(-71,43){-0.5} % Bottom row
%\put(-63,19){-1} % Bottom row
\end{picture}
\caption{\label{fig:rotsym}Top: profiles  of three radial conductivities along the real axis. 
The middle conductivity has a jump along the circle $|z|=0.6$; the one on the right has jumps on both  $|z|=0.4$ 
and  $|z|=0.6$. Rows 2 and 3: the functions $\widehat{\omega}^+(1,t/2,1)$ 
and $\widehat{\omega}^{a,+}(t/2,1)$, resp.; note the artifacts at $t=0$. 
Rows 3 and 4: as described in Sec. \ref{sec evenodd}, the artifacts are eliminated by subtracting  $\widehat\omega^-,\, \widehat\omega^{a,-}$, resp.
}
\end{figure}

We study three rotationally symmetric conductivities defined in the unit disc. The first conductivity $\sigma_1$ is smooth. The second conductivity is defined as
$$
  \sigma_2 = \sigma_1-0.3\chi_{D(0,0.6)}
$$
and therefore has a jump of magnitude $0.3$ along the circle centered at the origin and radius $0.6$. The third rotationally symmetric conductivity is defined as 
$$
  \sigma_3 = \sigma_2+0.3\chi_{D(0,0.4)}
$$
and has jumps of magnitude $0.3$ along the circles centered at the origin and radii $0.4$ and $0.6$. 

In Fig. \ref{fig:rotsym} we show the profiles of $\widehat \omega(1,t,1)$ for three rotationally symmetric conductivity phantoms. 
The first phantom is smooth, while the second and the third have jumps. 
The position and the sign of each jump is clearly visible from the CGO-Fourier data. 
Note that the artifact singularity appearing around $0$ in the second and  third rows 
vanishes when considering the difference of the two CGO functions, in the fourth and fifth rows. This confirms the parity symmetry analyzed in Sec. \ref{sec evenodd}.

\begin{figure}
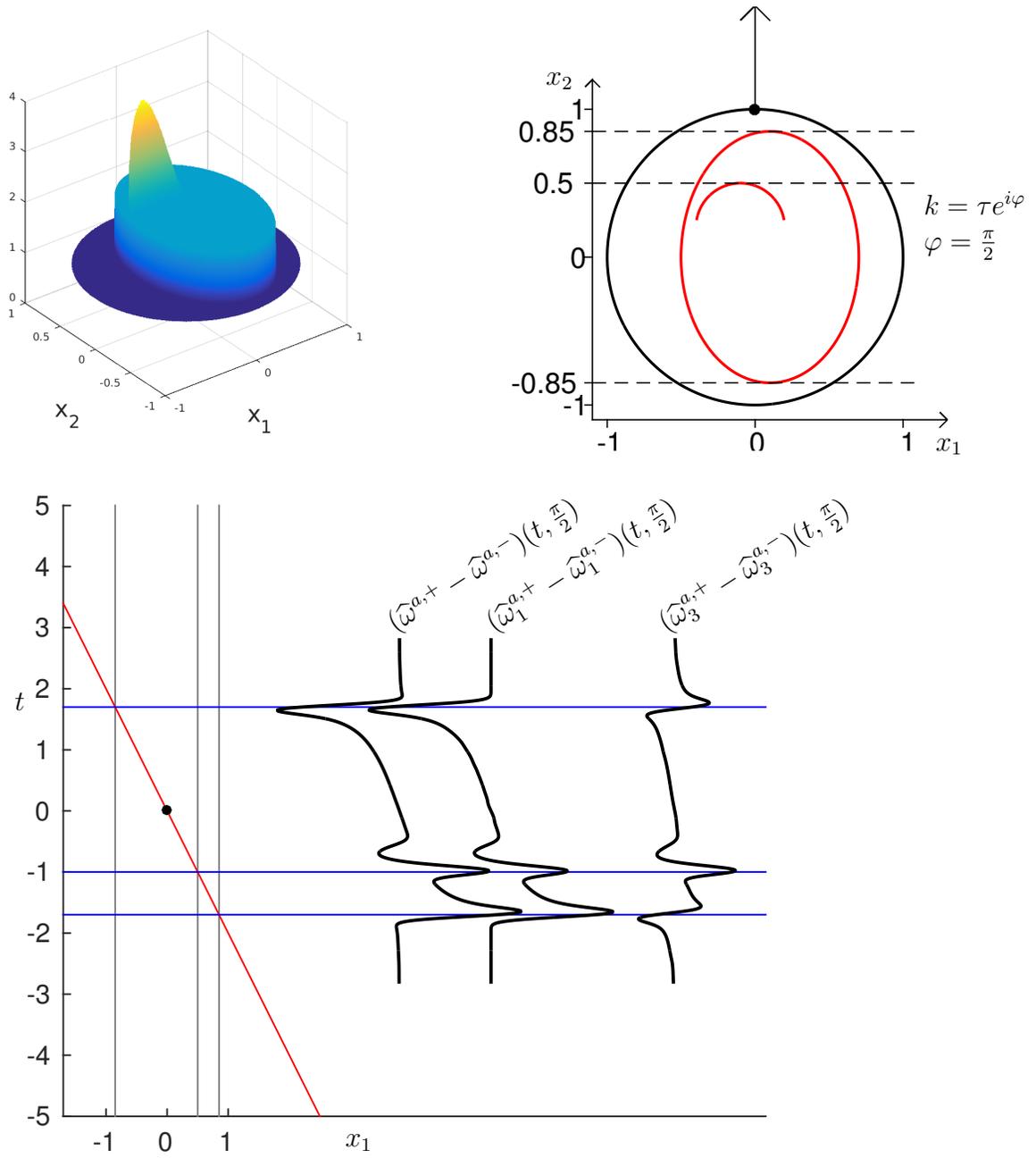

\begin{picture}(300,480)
\put(-35,0){\includegraphics[width=11cm]{diagram_VHED_i.eps}}
\put(116,223){\rotatebox{33}{$(\widehat{\omega}^{a,+}-\widehat{\omega}^{a,-})(t,\frac{\pi}{2})$}}
\put(157,223){\rotatebox{33}{$(\widehat{\omega}^{a,+}_1-\widehat{\omega}^{a,-}_1)(t,\frac{\pi}{2})$}}
\put(230,223){\rotatebox{33}{$(\widehat{\omega}^{a,+}_3-\widehat{\omega}^{a,-}_3)(t,\frac{\pi}{2})$}}
\put(100,5){$x_1$}
\put(-40,190){$t$}

\put(-100,300){\includegraphics[width=9cm]{sigmaHEM_mesh.eps}}
%\put(-10,300){$x_2$}
%\put(90,295){$x_1$}

\put(170,300){\includegraphics[width=6.5cm]{sigmaVHED_diag_i.eps}}
\put(185,455){$x_2$}
\put(350,300){$x_1$}
%\put(280,450){\textcolor{black}{$z=i$}}
\put(345,400){\textcolor{black}{$k=\tau e^{i\varphi}$}}
\put(345,385){\textcolor{black}{$\varphi=\frac \pi 2$}}

\end{picture}
\caption{\label{fig:HMEi}Diagram showing the propagation of singularities for the HME phantom with zero background.  The virtual direction is $k = i$.}
\end{figure}

\begin{figure}
\begin{picture}(300,480)
\put(-35,0){\includegraphics[width=11cm]{diagram_VHED_1.eps}}
\put(116,223){\rotatebox{33}{$(\widehat{\omega}^{a,+}-\widehat{\omega}^{a,-})(t,0)$}}
\put(157,223){\rotatebox{33}{$(\widehat{\omega}^{a,+}_1-\widehat{\omega}^{a,-}_1)(t,0)$}}
\put(230,223){\rotatebox{33}{$(\widehat{\omega}^{a,+}_3-\widehat{\omega}^{a,-}_3)(t,0)$}}
\put(100,5){$x_1$}
\put(-40,190){$t$}

\put(-100,300){\includegraphics[width=9cm]{sigmaHEM_mesh.eps}}
%\put(-10,300){$x_2$}
%\put(90,295){$x_1$}

\put(170,300){\includegraphics[width=6.5cm]{sigmaVHED_diag_1.eps}}
\put(165,455){$x_2$}
\put(330,300){$x_1$}
%\put(315,390){\textcolor{black}{$z=1$}}
\put(345,360){\textcolor{black}{$k=\tau e^{i\varphi}$}}
\put(345,345){\textcolor{black}{$\varphi=0$}}

\end{picture}

\caption{\label{fig:HME1}Diagram showing the propagation of singularities for the HME phantom with zero background.  The virtual direction is $k = 1$.}

\end{figure}

\begin{figure}
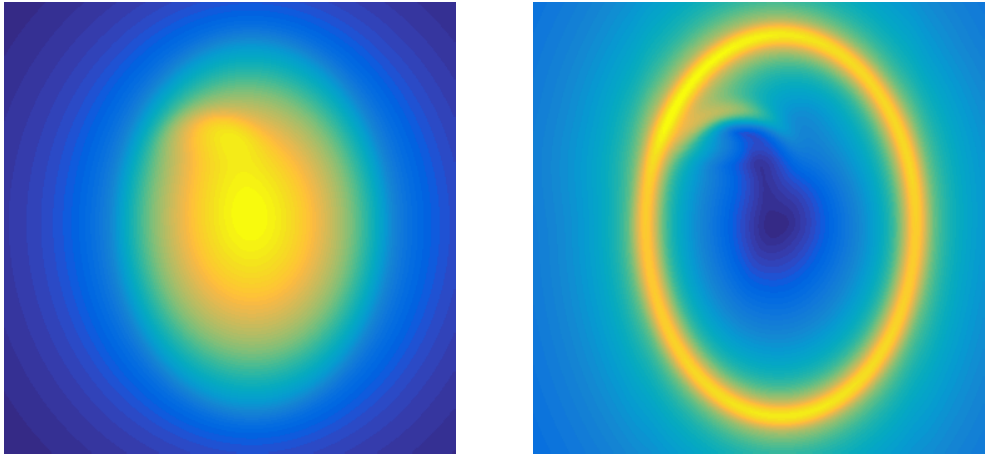

\begin{picture}(300,200)
\put(-30,0){\includegraphics[width=6cm]{recon_VHED_ex.eps}}
\put(170,0){\includegraphics[width=6cm]{recon_VHED_lam.eps}}
\end{picture}
\caption{\label{fig:HME_2D1}Reconstructions from the averaged full series $\widetilde \omega^a_{\mbox{\tiny diff}}$. 
Left: exact inversion formula. Right: $\Lambda$-tomography like reconstruction.}
\end{figure}

\begin{figure}
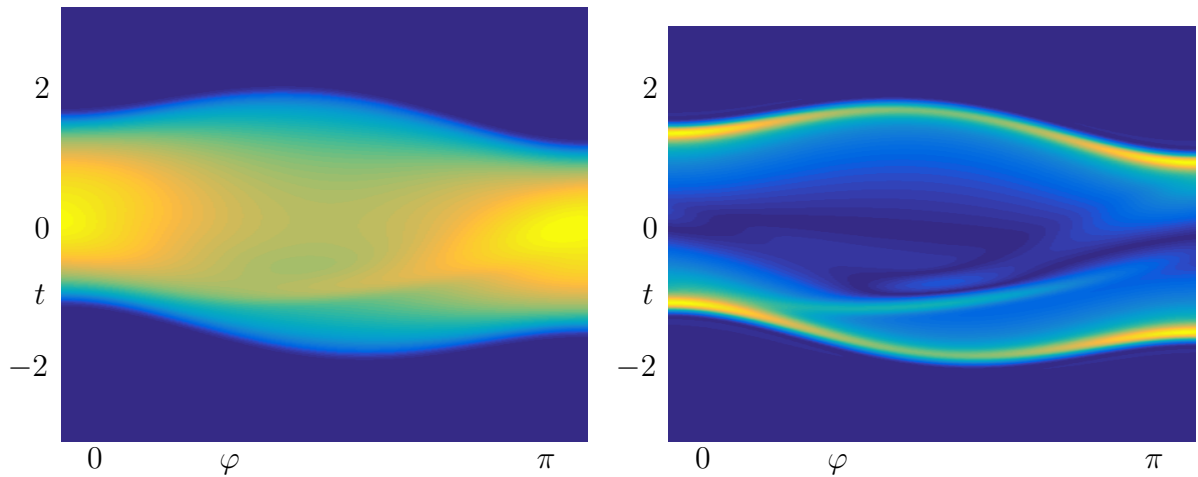

\begin{picture}(300,250)
\put(-60,10){\includegraphics[width=7cm]{sinog_VHED_ex.eps}}
\put(170,10){\includegraphics[width=7cm]{sinog_VHED_lam.eps}}
\put(0,0){$\varphi$}
\put(-50,0){$0$}
\put(120,0){$\pi$}
\put(-70,87){$0$}
\put(-70,62){$t$}
\put(-70,140){$2$}
\put(-80,35){$-2$}

\put(180,0){$0$}
\put(230,0){$\varphi$}
\put(350,0){$\pi$}
\put(160,87){$0$}
\put(160,62){$t$}
\put(160,140){$2$}
\put(150,35){$-2$}

\end{picture}
\caption{\label{fig:HME_2D2}Sinograms of the averaged full series $\widetilde \omega^a_{\mbox{\tiny diff}}$. Left: exact reconstruction sinogram. Right: $\Lambda$-tomography like sinogram.}
\end{figure}

\subsection{Half-moon and ellipse (HME)}

This  conductivity phantom has a large elliptical inclusion and another smaller inclusion inside the ellipse. 
The smaller inclusion has a jump along an almost complete half-circle. 
This example was chosen because it has two nontrivial features in the wave front set for the horizontal direction and three for the vertical. 
Figs. \ref{fig:HME1} and \ref{fig:HMEi} show, in particular, \textit{ladder} diagrams of the propagation of singularities 
in the directions $k = i$ and $k=1$, resp.: the zeroth and second order terms of the Neumann series for $\widehat \omega^a$ are displayed, 
as well as the full series of the difference of the CGOs: 
$\widehat\omega_{\mbox{\tiny diff}}=(\widehat\omega^+-\widehat\omega^-)/2$.

Fig. \ref{fig:HME_2D1} shows 2D reconstructions obtained using the new algorithm, with the two different inversion formulas. 
In Fig. \ref{fig:HME_2D2} we show the values of $\widehat \omega^a_{\mbox{\tiny diff}}(t,e^{i\varphi})$ for $t \in [-3,3]$ and $\varphi \in [0,\pi]$. 
We borrow the term \textit{sinogram} to describe these plots, because of the clear similarity with the sinograms of X-ray tomography.

\section{Conclusion}\label{sec conclusions}

We introduce a novel and robust method for recovering  singularities of conductivities from electric boundary measurements.  
It is unique in its capability of recovering inclusions within inclusions in an unknown inhomogeneous background conductivity. 
This method provides a new connection between diffuse tomography (EIT) and classical parallel-beam X-ray tomography and filtered back-projection algorithms.

Full analysis of  the  higher order terms $\tw_n$ remains an open problem. We point out that there is a strong  formal similarity 
between the multilinear forms $\mu\to\tw_n$ and multilinear operators considered by Brown \cite{Brown2001}, 
Nie and Brown \cite{Nie2011} and Perry and Christ \cite{Perry2011}. 
Indeed, any Born-type expansion naturally leads to expressions of this general form, 
with the places of the Cauchy and Beurling kernels for $\omega_n$ or $\tw_n$  here being taken by the appropriate Green's functions. 
However, an important feature here is that the singular coefficient in a Beltrami equation occurs in the top order term, 
rather than as a potential as in the works cited above. For the application needed in this setting, useful  function space estimates  
do not seem to follow from existing results, which would require higher regularity of $\mu$, and this is an interesting topic for future investigation.

\end{document}